\newtheorem{theorem}{Theorem}[section]
\newtheorem{lemma}[theorem]{Lemma}
\theoremstyle{definition}
\newtheorem{definition}[theorem]{Definition}
\newtheorem{example}[theorem]{Example}
\theoremstyle{remark}
\numberwithin{equation}{section}
\newtheorem{proposition}[theorem]{Proposition}
\newtheorem{conjecture}[theorem]{Conjecture}
\begin{document}
\title{A Study of Gram Determinants in Knot Theory}

\author{Dionne Ibarra}
\address{School of Mathematics, Monash University, Australia.}
\email{{\rm \textcolor{blue}{dionne.ibarra@monash.edu}}}
	
\author{Gabriel Montoya-Vega}
\address{Department of Mathematics, University of Puerto Rico at R\'io Piedras, San Juan, PR, USA }
\email{{\rm \textcolor{blue}{gabrielmontoyavega@gmail.com $|$ gabriel.montoya@upr.edu}}}

	\keywords{Gram determinants, knot theory, relative Kauffman bracket skein module}
	
	\date{\today}

 \subjclass[2020]{Primary: 57K10. Secondary: 57M27.}

	\maketitle

\begin{abstract}
Historically originated as a sub-field of topology, knot theory is an active area of mathematical investigation that has strong connections with a diverse set of scientific fields such as algebra, biology, and statistical mechanics. A popular and important concept in linear algebra, Gram determinants enjoy a connection with the mathematical theory of knots. In this article, we expose this concept and present several types of Gram determinants in what can be considered as a survey of the current Gram determinants of interest to knot theorists; examples are included to illustrate the definitions. In particular, we pay special attention to a recently defined determinant from a M\"obius band and we further study its structure. At the end, some speculation is presented regarding the closed formula for the Gram determinant of type $(Mb)_1$, a problem that arouses serious interest among knot theorists. 
\end{abstract}

\tableofcontents

\section{Introduction to Gram Determinants in Knot Theory}
The mathematical theory of knots has its historical origins as a sub-area of topology, arguably because of Leibniz's desire that a different type of analysis was needed; he called it \textit{analysis situs} (geometry of position). Although the theory quickly developed into its own research field, it enjoys broad connections to other areas like graph theory, algebra, statistical mechanics, and mathematical biology. In this section we explore how Gram determinants are related to knot theory and we present different types of this determinant. It is important to mention that the initial interest, and most of the current research in Gram determinants related to knot theory, is owed to Edward Witten's belief on the existence of a $3$-manifold invariant related to the Jones polynomial. In particular, it was Nicolai Reshetikhin's and Vladimir Turaev's construction of such invariant that triggered a number of subsequent works by knot theorists on the relation of these determinants with the mathematical theory of knots.

\ 

The notion of a relative skein module is needed in the study of Gram determinants in knot theory. The most popular of such structures is the Kauffman bracket skein module; its structure has shown connections between the module and the geometry and topology of the 3-manifold  (see for example \cite{Prz1}). The relative Kauffman bracket skein module is of our interest and it is introduced in Definition \ref{rkbsm}.  

\begin{definition}\label{rkbsm}
Let $M$ be an oriented $3$-manifold and  $\{x_i\}_{1}^{2n}$ be the set of $2n$ framed points on $\partial M$. Let $I=[-1, 1]$, and let $\mathcal{L}^{\mathit{fr}}(2n)$ be the set of all relative framed links (which consists of all framed links in $M$ and all framed arcs, $I \times I$, where $ I \times \partial I$ is connected to framed points on the boundary of $M$) up to ambient isotopy while keeping the boundary fixed in such a way that $L \cap \partial M = \{x_i\}_{1}^{2n}$. Let $R$ be a commutative ring with unity,  $A \in R$ be invertible, and let $S_{2,\infty}^{\mathit{sub}}(2n)$ be the submodule of $R\mathcal{L}^{\mathit{fr}}(2n)$ that is generated by the Kauffman bracket skein relations:

\begin{itemize}
    \item [(i)]$L_+ - AL_0 - A^{-1}L_{\infty}$, and
    \item [(ii)] $L \sqcup \pmb{\bigcirc}  + (A^2 + A^{-2})L$,
\end{itemize}
where \pmb{$\bigcirc$} denotes the framed unknot and the {\it skein triple} $(L_+$, $L_0$, $L_{\infty})$ denotes three framed links in $M$ that are identical except in a small $3$-ball in $M$ where the difference is shown in Figure \ref{KBSM:skeintriple}. Then, the \textbf{relative Kauffman bracket skein module} (RKBSM) of $M$ is the quotient: $$\mathcal{S}_{2,\infty}(M, \{x_i\}_1^{2n}; R, A) = R\mathcal{L}^{\mathit{fr}}(2n) / S_{2,\infty}^{\mathit{sub}}(2n).$$ 
\end{definition}

\begin{figure}[ht]
\centering
\begin{subfigure}{.25\textwidth}
\centering
$\vcenter{\hbox{\includegraphics[scale = .45]{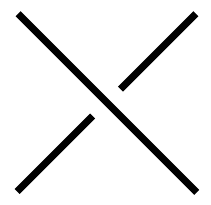}}} $
\caption{$L_+$.} \label{KBSM:Lplus}
\end{subfigure}
\centering
\begin{subfigure}{.25\textwidth}
\centering
$\vcenter{\hbox{\includegraphics[scale = .45]{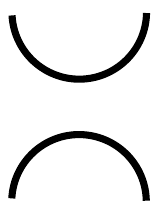}}} $
\caption{$L_0$.} \label{KBSM:Lzero}
\end{subfigure}
\centering
\begin{subfigure}{.25\textwidth}
\centering
$\vcenter{\hbox{\includegraphics[scale = .45]{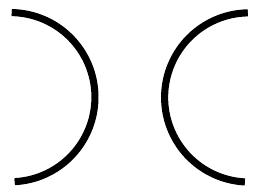}}} $
\caption{$L_{\infty}$.} \label{KBSM:Linfinity}
\end{subfigure}
\caption{The skein triple.} \label{KBSM:skeintriple}
\end{figure}
When the manifold is a disc, the free $R$-module can be equipped with an algebra structure that leads to the classical Temperley-Lieb algebra $\mathit{TL}_n$ (see \cite{PBIMW}, for example). Definition \ref{bilinearformdef} introduces a bilinear form on $\mathit{TL}_n$, and the notion of a Gram matrix.

Physicists  N. Temperley and E. Lieb in \cite{TemperleyLiebTLn} used the Temperley-Lieb algebra to study $2$-dimensional Potts models.
R. J. Baxter in \cite{BaxterExactlysolved} presented the first formal definition of this algebra. Jones independently introduced $TL_n$ in \cite{JonesIndexsubfactors} while working on von Neumann algebras.

\begin{definition} \label{TEMPERLEY:tln}
Let $R$ be a commutative ring with unity and $d \in R$. Let $n \in \mathbb{N}$ be fixed, then the $n^{th}$ \textbf{Temperley-Lieb algebra}, $\mathit{TL}_n$, is defined to be the unital associative algebra over R with generators $ e_1, \dots , e_{n-1}$, identity element $1_n$, 
and relations:
\begin{enumerate}
    \item $e_i e_{j} e_i = e_i \text{ for } |i-j|=1$
    \item $e_i e_j = e_j e_i \text{ for } |i-j|>1$
    \item $e_i^2 = d e_i$
\end{enumerate}
\end{definition}

 L. H.  Kauffman in \cite{KauffmanBracket}, motivated by utilizing the Kauffman bracket, considered the Temperley-Lieb algebra over $R=\mathbb{Z}[A^{\pm1}]$, where $A$ is an indeterminate and $d = -A^2 -A^{-2}$. He then constructed a graphical interpretation using tangles. 

An \textbf{$n$-tangle} is a rectangular shaped disk with $n$ marked boundary points on the left and $n$ marked boundary points on the right. Kauffman's graphical interpretation of the Temperley-Lieb algebra is obtained from the basis of crossingless tangles. The identity element corresponds to an $n$-tangle with $n$ parallel arcs in which each $i^{\mathit{th}}$ point on the left is connected to the $i^{\mathit{th}}$ point on the right, and each $e_i$ corresponds to an $n$-tangle
that has two arcs, each connected to the $i^{\mathit{th}}$ and $(i+1)^{\mathit{th}}$ point on the left  and right, respectively. An illustration is given in Fig. \ref{TEMPERLEY:graphical}. For simplicity we will label an arc by $n$ to denote $n$ parallel arcs as shown in Fig. \ref{TEMPERLEY:fig1}.

\begin{figure}[ht]
\centering
\begin{subfigure}{.49\textwidth}
\centering
$\vcenter{\hbox{
\begin{overpic}[scale = 2]{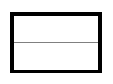}
\put(51, 42){$n$}
\end{overpic} }} $
\caption{Identity element.} \label{TEMPERLEY:fig1}
\end{subfigure}
\begin{subfigure}{.49\textwidth}
\centering
$\vcenter{\hbox{
\begin{overpic}[scale = 2]{tnen}
\put(33, 23){$n-i-1$}
\put(45, 56){$i-1$}
\end{overpic} }} $
\caption{$e_i$.} \label{TEMPERLEY:fig2}
\end{subfigure}
\caption{A graphical interpretation of the generators of $\mathit{TL}_n$.}\label{TEMPERLEY:graphical}
\end{figure}

\begin{definition}
The \textbf{$n$-tangle algebra} is an $R$-module with basis elements consisting of $n$-tangles. The multiplication of two $n$-tangles is defined by identifying the right side of the first $n$-tangle to the left side of the second $n$-tangle, while respecting the boundary points and by letting any resulting trivial curve be denoted by $d$, see Fig. \ref{JONESWENZL:mutip} for an illustrative example. Kauffman's diagrammatic interpretation of the Temperley-Lieb algebra, also known as the \textbf{diagrammatic algebra}, is a subalgebra of the $n$-tangle algebra. It is generated by tangles with no crossings where homotopically trivial curves are denoted by $d \in R$.  
\end{definition}

\begin{figure}[ht]
$$e_2 e_2 =\vcenter{\hbox{\begin{overpic}[scale=1.5]{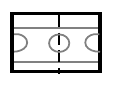}
\end{overpic}}} = d \vcenter{\hbox{\begin{overpic}[scale=1.5]{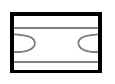}
\end{overpic}}}= d e_2.$$
\caption{An illustration of  multiplication.}\label{JONESWENZL:mutip}
\end{figure}

\begin{theorem}\cite{KauffmanBracket}
The diagrammatic algebra is isomorphic to $TL_n$ and can be thought of as a diagrammatic interpretation of it.
\end{theorem}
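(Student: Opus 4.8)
The plan is to exhibit an explicit algebra homomorphism from $TL_n$ to the diagrammatic algebra, which I will call $D_n$, and then promote it to an isomorphism by a rank comparison. First I would define $\phi \colon TL_n \to D_n$ on generators by sending each $e_i$ to the crossingless $n$-tangle $E_i$ depicted in Fig. \ref{TEMPERLEY:fig2}, namely the tangle with a single cup-cap pair joining the $i$th and $(i+1)$th points on each side and parallel strands elsewhere, and sending $1_n$ to the identity tangle of Fig. \ref{TEMPERLEY:fig1}. To see that $\phi$ is well defined it suffices to verify that the diagrams $E_i$ satisfy the three defining relations of Definition \ref{TEMPERLEY:tln}. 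Relation (3) holds because stacking $E_i$ on itself creates exactly one homotopically trivial closed curve, which by definition contributes a factor of $d$, so $E_i^2 = dE_i$; this is precisely the computation illustrated in Fig. \ref{JONESWENZL:mutip}. Relation (2) is immediate, since for $|i-j|>1$ the cup-cap pairs occupy disjoint horizontal bands and can be slid past one another. Relation (1) is the straightforward planar isotopy $E_i E_{i\pm 1} E_i = E_i$, in which the middle generator's strand is absorbed with no closed loop produced.

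Next I would establish surjectivity. Every crossingless $n$-tangle, with trivial loops already replaced by powers of $d$, can be factored as a product of the $E_i$: any non-crossing perfect matching of the $2n$ boundary points is obtained by composing elementary cup-cap moves, which is a purely diagrammatic induction on the nesting depth of the arcs. Hence the $E_i$ generate $D_n$ as an algebra and $\phi$ is onto.

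The remaining and most delicate step is injectivity, which I would handle by comparing ranks. On the diagrammatic side the crossingless matchings form an $R$-basis of $D_n$, and their number is the Catalan number $C_n = \frac{1}{n+1}\binom{2n}{n}$, so $D_n \cong R^{C_n}$ is free. On the algebraic side I would show that at most $C_n$ reduced monomials in the $e_i$ span $TL_n$, by a normal-form argument: using relations (1)--(3), every word $e_{i_1}\cdots e_{i_k}$ rewrites to a canonical monomial whose index sequence obeys a prescribed increasing/decreasing block pattern, and these canonical monomials are enumerated by $C_n$. This gives a surjection $R^{C_n} \twoheadrightarrow TL_n$; composing it with $\phi$ produces a surjective endomorphism of the finitely generated free module $R^{C_n}$, which is necessarily an isomorphism. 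Consequently both maps are isomorphisms, so in particular $\phi$ is injective and therefore an isomorphism of $R$-algebras.

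I expect the normal-form count to be the main obstacle: establishing the upper bound $\dim_R TL_n \le C_n$ rigorously requires a confluent rewriting procedure for monomials in the $e_i$ together with an exact enumeration of the resulting reduced forms, whereas verifying the relations and proving surjectivity amount to routine planar isotopies.
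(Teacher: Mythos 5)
The paper does not actually prove this theorem: it is stated with a citation to Kauffman's work \cite{KauffmanBracket}, so there is no in-paper argument to compare yours against. Judged on its own terms, your outline is the standard proof and its architecture is sound: verifying the three relations of Definition \ref{TEMPERLEY:tln} on the diagrams $E_i$ makes $\phi$ well defined (well-definedness needs nothing more, since $TL_n$ is presented by generators and relations), factoring an arbitrary crossingless matching into cup-cap generators gives surjectivity, and the rank comparison correctly finishes the job --- a surjective endomorphism of a finitely generated module over a commutative ring (here $R=\mathbb{Z}[A^{\pm 1}]$) is automatically injective, so once $TL_n$ is spanned by $C_n$ elements and the diagrammatic algebra is free of rank $C_n$, both the spanning surjection $R^{C_n}\twoheadrightarrow TL_n$ and $\phi$ are forced to be isomorphisms.

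The one piece you have not supplied, and which you correctly flag, is the spanning bound: every word in the $e_i$ reduces, using only relations (1)--(3), to one of exactly $C_n$ canonical monomials (Jones' normal form, products $(e_{i_1}e_{i_1-1}\cdots e_{j_1})(e_{i_2}\cdots e_{j_2})\cdots$ with increasing $i_1<i_2<\cdots$ and $j_1<j_2<\cdots$). This is a genuinely nontrivial rewriting induction, and without it injectivity does not follow: relation-checking and surjectivity alone show only that $TL_n$ surjects onto the diagram algebra, leaving open the possibility that $TL_n$ is strictly larger. So your proposal is a correct skeleton of the classical argument rather than a complete proof; completing it means carrying out the confluent rewriting and Catalan enumeration you describe, which is precisely the content of the original treatments by Kauffman and Jones.
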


\section{Determinant of Type A}
\begin{definition}\
\label{bilinearformdef}
Let $R=\mathbb{Z}[A^{\pm1}]$, $d= -A^2-A^{-2}$, and consider the disk $D^2$ with $2n$ framed points on its boundary. Let $\mathcal{C}_n^A = \{c_1, c_2,\ldots, c_{c_n}\}$ be the set of all diagrams with crossingless connections, up to ambient isotopy, between the $2n$ framed points in $D^2$. 
Define a bilinear form $\langle \ , \ \rangle_A$ in the following way: $$\langle \ , \ \rangle_A : \mathcal{S}_{2,\infty}(D^2 \times I, \{x_i\}_1^{2n}) \times \mathcal{S}_{2,\infty}(D^2 \times I, \{x_i\}_1^{2n}) \longrightarrow R.$$

Let $c_i, c_j \in \mathcal{C}_n^A.$ Glue $c_i$ with the inversion of $c_j$ along the marked circle, respecting the labels of the framed points. The resulting picture is that of a disk with disjoint null homotopic circles. Thus, we define, $\langle c_i , c_j\rangle_A = d^m$ where $m$ denotes the number of these circles.  Figure \ref{fig:ann2nbndpts} illustrates an example of the bilinear form when $n = 4$.

\begin{figure}[ht]
\centering
$\left\langle \vcenter{\hbox{\begin{overpic}[scale = .3]{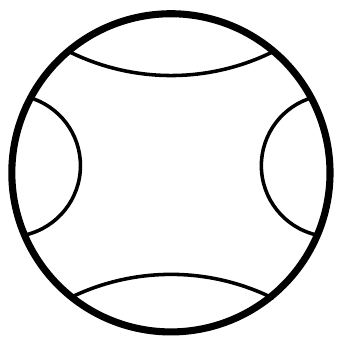}
\end{overpic} }}, \vcenter{\hbox{\begin{overpic}[scale = .3]{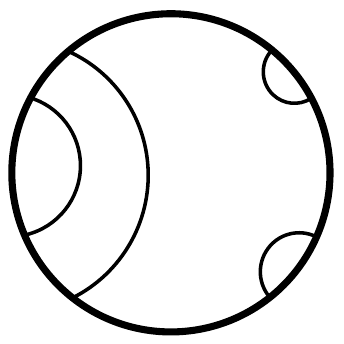}
\end{overpic} }} \right\rangle_A = \vcenter{\hbox{\begin{overpic}[scale = .4]{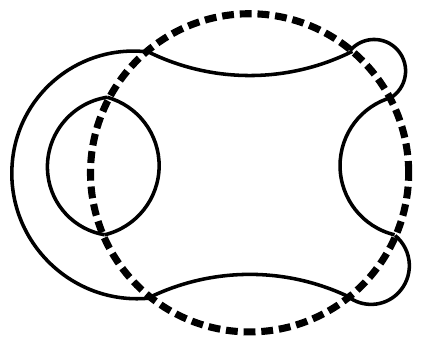}
\end{overpic} }}$
    \caption{The bilinear form on two elements in $\mathit{TL}_4$. Here, the result is $d^2$.}
    \label{fig:ann2nbndpts}
    
    \end{figure}

The \textbf{Gram matrix of type} $\boldsymbol{A}$ is defined as $G_n^{A} = ( \langle a_i , a_j\rangle_A ) _{1 \leq i,j \leq C_n} $. Its determinant $D_n^{A}$ is called the \textbf{Gram determinant of type} $\boldsymbol{A}$.

\end{definition}

\begin{example}\label{GRAMA:G3}
The following table shows the Gram matrix of type $A$ for $n=3$, $G_3^A$. It can be seen that $D_3^A = (d^2-1)^4\cdot d^4 \cdot (d^2-2d)$.

\vspace{0.25 in}
\begin{table}[H]
\centering
\begin{tabular}{ c|||c|c|c|c|c|c||c|c|c|c||c}
 \huge{$\langle \ , \ \rangle_A$} & $\vcenter{\hbox{\begin{overpic}[scale = .2]{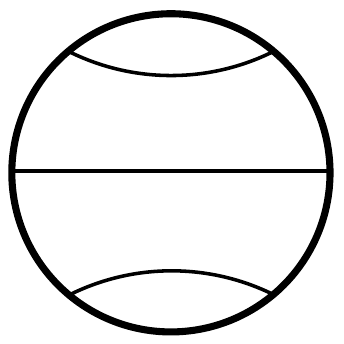}
 \put(12,36){$b_1$}
\end{overpic} }}$ 
& $\vcenter{\hbox{\begin{overpic}[scale = .2]{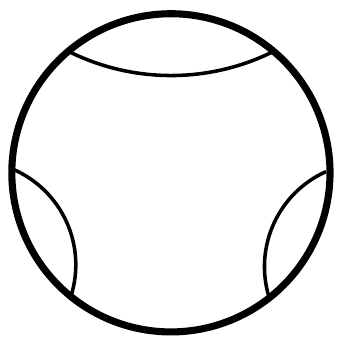}
 \put(12,36){$b_2$}
\end{overpic} }}$ 
& $\vcenter{\hbox{\begin{overpic}[scale = .2]{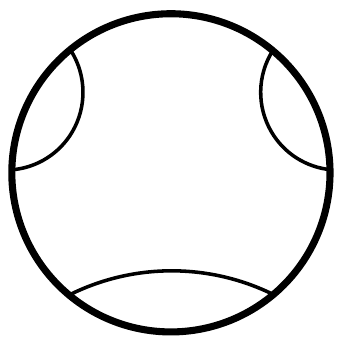}
 \put(12, 36){$b_3$}
\end{overpic} }}$ 
& $\vcenter{\hbox{\begin{overpic}[scale = .2]{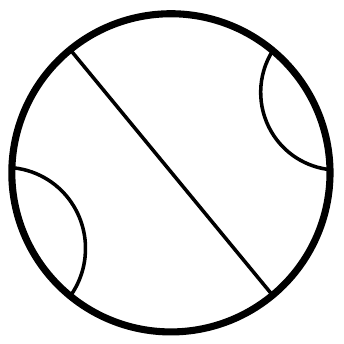}
 \put(12, 36){$b_4$}
\end{overpic} }}$ 
& $\vcenter{\hbox{\begin{overpic}[scale = .2]{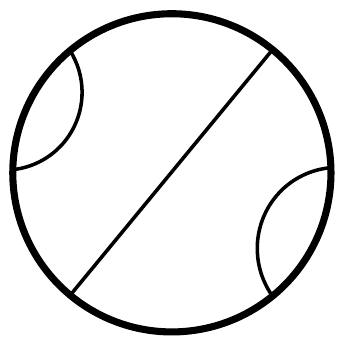}
 \put(12, 36){$b_5$}
\end{overpic} }}$ \\
\hline \hline \hline 	
$\vcenter{\hbox{\includegraphics[scale = .13, height = 1.2cm]{T1.pdf}}}$        
& $ d^3$ & $ d^2$ & $d^2$ & $d$ & $d$ \\ \hline 
$\vcenter{\hbox{\includegraphics[scale = .13, height = 1.2cm]{T2.pdf}}}$          
& $ d^2$ & $ d^3$ & $d$ & $d^2$ & $d^2$ \\ \hline 
$\vcenter{\hbox{\includegraphics[scale = .13, height = 1.2cm]{T3.pdf}}}$ 
& $ d^2$ & $ d$ & $d^3$ & $d^2$ & $d^2$ \\ \hline 
$\vcenter{\hbox{\includegraphics[scale = .13, height = 1.2cm]{T4.pdf}}}$  
& $ d$ & $ d^2$ & $d^2$ & $d^3$ & $d$ \\ \hline 
$\vcenter{\hbox{\includegraphics[scale = .13, height = 1.2cm]{T5.pdf}}}$  
& $ d$ & $ d^2$ & $d^2$ & $d$ & $d^3$  \\ \hline 
\end{tabular}
 \caption{The Gram matrix $G_3^A$.}
 \end{table}

\end{example}

We now recall the definition of the Chebyshev polynomials of the second kind. Let $d$ be the free variable and denote the polynomial by $\Delta_n$. This polynomial is given by:
\begin{align*}
\Delta_0(d) &= 1, \\
\Delta_1(d) &= d,\\
\Delta_n(d) &= d \cdot \Delta_{n-1}(d) - \Delta_{n-2}(d).
\end{align*}

Therefore, from Example \ref{GRAMA:G3} we get $D_3^A = \Delta_1^4 \cdot \Delta_2^4 \cdot \Delta_3$.

\ 

A closed formula for the calculation of this determinant was found by Bruce Westbury \cite{Wes} and Philippe Di Francesco \cite{DiF}. One of the tools used by Di Francesco was the Gram-Schmidt orthogonalization together with some combinatorial methods. In \cite{Cai}, Xuanting Cai used Jones-Wenzl idempotents to significantly shorten the proof in \cite{DiF}. For a more detailed discussion about this formula and its proof see Lecture 17 in \cite{BIMP}.

\begin{theorem}\cite{Wes, DiF, Cai}\label{GRAMA:maintheorem}
Let $R = \mathbb{Z}[A^{\pm 1}].$ Then,
\begin{align*}
D_n^{A}(d) = \prod\limits_{i=1}^{n} \left(\frac{\Delta _i}{\Delta_{i-1}} \right)^{\alpha _i},
\end{align*}

\begin{align*}
\text{where }  \Delta_i = (-1)^i \frac{A^{2i+2}-A^{-2i-2}}{A^2-A^{-2}} \text{ and } \alpha _i = \binom{2n}{n-i} - \binom{2n}{n-i-1}.
\end{align*}
\end{theorem}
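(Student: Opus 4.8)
The plan is to follow the idempotent-theoretic strategy of Cai \cite{Cai} and to \emph{diagonalize} the form $\langle\,,\,\rangle_A$ by passing to a basis adapted to the Jones--Wenzl projectors, rather than attacking the $C_n \times C_n$ matrix $G_n^A$ head-on. Throughout I would identify the diagram basis $\mathcal{C}_n^A$ of crossingless connections of $2n$ points with the set of Dyck paths of semilength $n$: reading the boundary points in cyclic order, an arc that opens is an up-step and the arc that closes it is a down-step, so that nested, non-crossing matchings correspond exactly to lattice paths from $(0,0)$ to $(2n,0)$ that never pass below the axis. This bookkeeping makes the nesting-depth filtration visible and will ultimately supply the multiplicities $\alpha_i$.

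First I would recall the Jones--Wenzl idempotents $p_k \in \mathit{TL}_k$, characterized by $p_k^2 = p_k$, by $p_k = 1_k + (\text{diagrams containing a turnback})$, and by the annihilation property $e_i p_k = p_k e_i = 0$ for all $i$, together with Wenzl's recursion and the partial-trace evaluations in which capping a single strand of $p_k$ returns $\tfrac{\Delta_k}{\Delta_{k-1}}\,p_{k-1}$ and fully closing $p_k$ returns $\Delta_k$. Working over the field of fractions of $R$ so that the $\Delta_k$ are invertible, the crux is to build from $\mathcal{C}_n^A$ an orthogonal basis $\{v_\gamma\}$ by cabling each parallel family of arcs with the appropriate $p_k$; the annihilation property is exactly what forces the off-diagonal inner products $\langle v_\gamma, v_{\gamma'}\rangle_A$ to vanish, since whenever $\gamma \neq \gamma'$ the gluing that computes $\langle\,,\,\rangle_A$ produces a turnback ($e_i$) abutting some projector.

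Next I would evaluate the diagonal entries. Using the partial-trace identities, the self-pairing $\langle v_\gamma, v_\gamma\rangle_A$ telescopes into a product of local contributions along $\gamma$, one factor $\tfrac{\Delta_h}{\Delta_{h-1}}$ for each down-step descending from height $h$ to $h-1$. Taking the determinant in this orthogonal basis then gives
\[
D_n^A \;=\; \prod_{\gamma} \langle v_\gamma, v_\gamma\rangle_A \;=\; \prod_{k=1}^{n}\left(\frac{\Delta_k}{\Delta_{k-1}}\right)^{\beta_k},
\]
where $\beta_k$ is the total number of down-steps from height $k$ to $k-1$ over all Dyck paths of semilength $n$. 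A reflection-principle count (equivalently, a path count in the Bratteli diagram of the tower $\mathit{TL}_0 \subset \mathit{TL}_1 \subset \cdots$) identifies $\beta_k$ with the ballot number $\binom{2n}{n-k} - \binom{2n}{n-k-1} = \alpha_k$, which I would confirm against the cases $n = 1, 2, 3$ recorded above. Substituting the closed form $\Delta_k = (-1)^k\frac{A^{2k+2}-A^{-2k-2}}{A^2-A^{-2}}$, which solves the Chebyshev recursion for $d = -A^2 - A^{-2}$, yields the stated expression; the localization is harmless since $D_n^A$ is a Laurent polynomial.

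I expect the genuine obstacle to be the orthogonalization step: proving that the cabled diagrams $\{v_\gamma\}$ span the same module and are pairwise orthogonal requires a careful accounting of how the projectors $p_k$ propagate and cancel turnbacks under the gluing that defines $\langle\,,\,\rangle_A$, and the exact diagonal evaluation rests on the recursive partial-trace formulas for the $p_k$. Once these structural facts are secured, the multiplicity count and the substitution for $\Delta_k$ are routine.
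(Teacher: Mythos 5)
Your proposal cannot be compared against a proof in this paper, because the paper does not prove Theorem \ref{GRAMA:maintheorem}: it is stated as a cited result of Westbury, Di Francesco, and Cai, with only a one-sentence description of the available methods (Di Francesco: Gram--Schmidt orthogonalization plus combinatorics; Cai: Jones--Wenzl idempotents) and a pointer to Lecture 17 of \cite{BIMP} for details. What you have written is a reconstruction of the route the paper attributes to \cite{Cai}, and its skeleton is sound: the turnback annihilation $e_i p_k = p_k e_i = 0$ does force orthogonality of the cabled elements, the partial-trace identity does make each self-pairing telescope into a product of factors $\Delta_h/\Delta_{h-1}$ over the down-steps of the corresponding Dyck path, and your multiplicity count is correct --- the total number of down-steps from height $k$ over all Dyck paths of semilength $n$ is indeed $\binom{2n}{n-k}-\binom{2n}{n-k-1}$, as one can check for $n=1,2,3$ against Example \ref{GRAMA:G3}.

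The one genuine hole in the write-up is the passage from orthogonality to the determinant. Spanning plus pairwise orthogonality of $\{v_\gamma\}$, which is all you undertake to prove, does not yield $D_n^A = \prod_\gamma \langle v_\gamma, v_\gamma\rangle_A$: a Gram determinant is not basis-independent, it gets multiplied by the square of the determinant of the transition matrix. You need the stronger statement that the transition matrix from the diagram basis $\mathcal{C}_n^A$ to $\{v_\gamma\}$ is unitriangular with respect to a suitable partial order (say, by number of turnbacks or by nesting), so that its determinant equals $1$ and the Gram determinant is unchanged. The needed ingredient already appears in your second paragraph --- since $p_k = 1_k + (\text{turnback terms})$, each $v_\gamma$ equals $c_\gamma$ plus a combination of diagrams strictly lower in that order --- but you never state or use this, so your displayed equation for $D_n^A$ is, as written, unjustified. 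With that observation added (and the harmless localization you already note, since the $\Delta_k$ are nonzero in the fraction field of $\mathbb{Z}[A^{\pm 1}]$), your outline becomes a correct proof plan, essentially that of \cite{Cai}.
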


\section{Generalization of Type A}\label{GRAMA:GenA} \

\noindent Here we consider a different bilinear form that results in a new type of Gram determinant \cite{BIMP}. Similarly as with the determinant of type $A$, the basis elements are the crossingless connections on $2n$ boundary points on a disk. The construction is presented below. 

\begin{definition}
Let the disk $D^2$, with $2n$ marked points on its boundary, be considered as a rectangle with $n$ points on the top edge and $n$ points on the bottom edge. Define a bilinear form $\langle \ , \ \rangle_{A^{gen}}$ in the following way: 
$$\langle \ , \ \rangle_{A^{gen}}: \mathcal{S}_{2,\infty}(D^2 \times I, \{x_i\}_1^{2n}) \times \mathcal{S}_{2,\infty}(D^2 \times I, \{x_i\}_1^{2n}) \longrightarrow \mathbb{Z}[d, z].$$
We view the elements of $\mathit{TL}_n$ vertically and multiplying from top to bottom. For $a_i, a_j \in \mathcal{A}_n$, glue $a_i$ with the reflection about the horizontal axis of $a_j$, which is denoted by $\overline{a_j}$, such that the bottom edge of $a_i$ is identified with the top edge of $\overline{a_j}$. Connect the marked points on the top edge of $a_i$ with those on the bottom edge of $\overline{a_j}$, in the annulus, respecting the ordering of the marked points (see Figure \ref{GRAMA:Genex}). Recall that the described operation is the same as taking the trace of the image of the bilinear form.

\begin{figure}[ht]
    \centering
  $$ \langle a_i, a_j \rangle_{A^{gen}} = \vcenter{\hbox{\begin{overpic}[scale = .4]{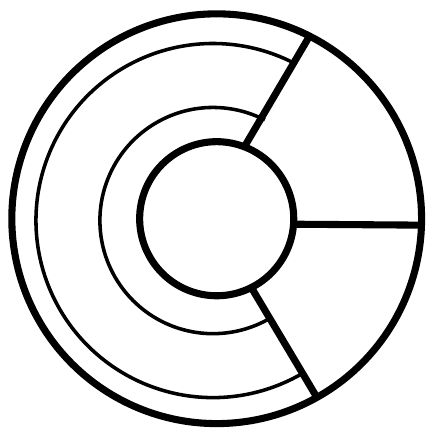}
\put(49.5, 68){$\cdot$}
\put(48, 65){$\cdot$}
\put(46.5, 62){$\cdot$}
\put(60,55){$a_i$}
\put(60,25){$\overline{a_j}$}
\put(49.5, 9.5){$\cdot$}
\put(48, 12.5){$\cdot$}
\put(46.5, 15.5){$\cdot$}
\end{overpic} }}= d^kz^h. $$
    \caption{The bilinear form of generalized type $A$.}
    \label{GRAMA:Genex}
\end{figure}

The result is an annulus with two types of disjoint circles, homotopically trivial and non - trivial. Thus, we define $\langle a_i , a_j\rangle_{A^{gen}} = d^kz^m$ where $k$ and $m$ denote the number of these circles respectively.
We define the \textbf{Gram matrix of generalized type} $\boldsymbol{A}$ as $G_n^{A^{\mathit{gen}}} = ( \langle a_i , a_j\rangle_{A^{gen}} ) _{1 \leq i,j \leq C_n}$, and denote its determinant by  $D_n^{A^{\mathit{gen}}}$.

\end{definition}

\begin{example}
 \label{GRAMA:GENAEX1}
Consider the basis of Catalan connections on six boundary points. Figure \ref{GRAMA:innerprod} illustrates the calculation of $\langle a_4, a_5 \rangle_{A^{gen}}$. The matrix $G_3^{A^{\mathit{gen}}}$ is computed in Table \ref{gramatable}, along with its determinant.

\begin{figure}[ht]
    \centering
  $$\langle a_4, a_5 \rangle_{A^{gen}} = \left\langle \vcenter{\hbox{\begin{overpic}[unit=1mm, scale = .25]{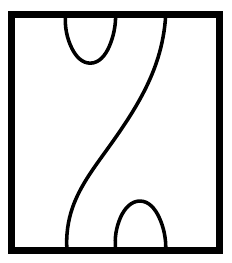}
 \put(5,14){$a_4$} 
\end{overpic} }},
\vcenter{\hbox{\begin{overpic}[unit=1mm, scale = .25]{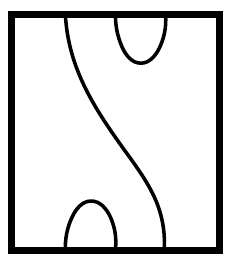}
 \put(5,14){$a_5$}
\end{overpic} }}
\right\rangle =
\vcenter{\hbox{\begin{overpic}[scale = .3]{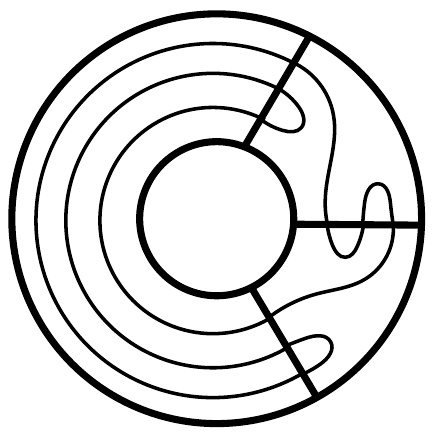}
\put(56,50){$a_4$}
\put(56,5){$\overline{a_5}$}
\end{overpic} }}= z $$
    \caption{The bilinear form of generalized type $A$.}
    \label{GRAMA:innerprod}
\end{figure}

\begin{table}[ht]
\centering
\begin{tabular}{ c|||c| c|  c| c |c| }
 \large{$ \langle \ , \ \rangle_{A^{gen}}$} & $\vcenter{\hbox{\includegraphics[scale = .3, height = 1cm]{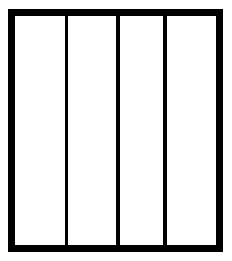}}}$ 
& $\vcenter{\hbox{\includegraphics[scale = .3,height = 1cm]{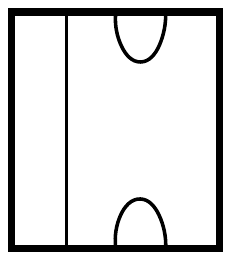}}}$
& $\vcenter{\hbox{\includegraphics[scale = .3,height = 1cm]{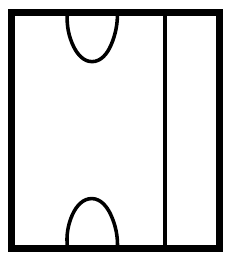}}}$
& $\vcenter{\hbox{\includegraphics[scale = .3,height = 1cm]{R4.pdf}}}$
& $\vcenter{\hbox{\includegraphics[scale = .3,height = 1cm]{R5.pdf}}}$
\\
\hline \hline \hline 

$\vcenter{\hbox{\includegraphics[scale = .3, height = 1cm]{R1.pdf}}}$       
& $ z^3$ & $ dz$ & $dz$ & $z$ & $z$ \\
\hline

$\vcenter{\hbox{\includegraphics[scale = .3, height = 1cm]{R2.pdf}}}$         
& $dz$
& $d^2z$ & $z$ & $dz$ & $dz$ \\
\hline
 
$\vcenter{\hbox{\includegraphics[scale = .3, height = 1cm]{R3.pdf}}}$          
& $dz$
& $z$ & $d^2z$ & $dz$ & $dz$ \\
\hline

$\vcenter{\hbox{\includegraphics[scale = .3, height = 1cm]{R4.pdf}}}$ 
& $ z$ & $ dz$ & $dz$ & $d^2z$ & $z$\\ 
\hline

 $\vcenter{\hbox{\includegraphics[scale = .3, height = 1cm]{R5.pdf}}}$& $ z$ & $dz$ & $dz$ & $z$ & $d^2z$  \\ \hline 
\end{tabular}
\caption{The Gram matrix $G_3^{A^{\mathit{gen}}}$.}
\label{gramatable}
\end{table}

We calculate and factor the determinant: $$D_3^{A^{\mathit{gen}}} = (d^2-1)^4z^5(z^2-2).$$

\end{example}

\section{Determinant of Type B}\label{typeB}\

\noindent It is possible to construct different bilinear forms by changing the ambient surface of the Kauffman bracket skein module. For example, we can explore the notion of a Gram determinant using the annulus and the M\"obius band. Paul Martin and Hubert Saleur were the first to consider the type $B$ Gram determinant. Their work focuses on applications to statistical mechanics, and they employ representation theory to prove their determinant in \cite{MS1, MS2}. For a historical overview about the development of this type of Gram determinants, the reader is referred to \cite{PBIMW, IM}.

\ 

Consider an annulus with $2n$ marked points along the outer boundary; see Figure~\ref{GRAMB:figann2nbndpts}.

\begin{figure}[ht]
\centering
$$\vcenter{\hbox{\begin{overpic}[unit=1mm, scale = .65]{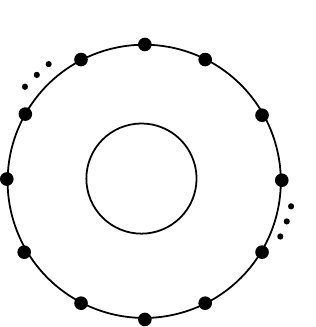}

\put(14.5,33){$a_1$}
\put(23,31){$a_2$}
\put(30, 6){$a_i$}
\put(0,31.5){$a_{2n-1}$}

\end{overpic} }}$$
    \caption{An annulus with 2n boundary points.}
    \label{GRAMB:figann2nbndpts}
    \end{figure}

Similarly as the type $A$ determinant, we can connect these $2n$ points to form crossingless connections in the annulus. Denote by $B_n$ the set of these diagrams. It can be shown that there are $\binom{2n}{n}$ such diagrams. See Figure \ref{fig:Basis2}, which illustrates $B_2$.

\begin{figure}[ht]
\[  \begin{minipage}{.5in} \includegraphics[width=\textwidth]{b1} \vspace{-15pt} \[\pmb b_1\] \end{minipage} 
               \qquad
        \begin{minipage}{.5in}\includegraphics[width=\textwidth]{b2} \vspace{-15pt} \[\pmb b_2\] \end{minipage}
         \qquad
        \begin{minipage}{.5in}\includegraphics[width=\textwidth]{b3} \vspace{-15pt} \[ \pmb b_3\]\end{minipage}
        \qquad
        \begin{minipage}{.5in}\includegraphics[width=\textwidth]{b4} \vspace{-15pt} \[ \pmb b_4\]\end{minipage}
        \qquad
        \begin{minipage}{.5in}\includegraphics[width=\textwidth]{b5} \vspace{-15pt} \[ \pmb b_5\]\end{minipage}   
        \qquad
        \begin{minipage}{.5in}\includegraphics[width=\textwidth]{b6} \vspace{-15pt} \[ \pmb b_6\]\end{minipage} 
        \hfill
        \]
    \caption{The set $B_2$.}
    \label{fig:Basis2}
\end{figure}

The bilinear form for Type $B$ is defined on these annular diagrams as follows: 
\begin{definition}\
Let $R = \mathbb{Z}[A^{\pm1}]$, $d=-A^2-A^{-2}$, and let $A^2$ be an annulus with $2n$ marked points on its outer boundary. Let $B_n = \{b_1, b_2, \ldots, b_{\binom{2n}{n}} \}$ be the set of all diagrams of crossingless connection between these $2n$ points. Define the type $B$ bilinear form $\langle \ \ , \ \rangle$ in the following way: $$\langle \ \ , \ \rangle : \mathcal{S}_{2,\infty}(A^2 \times I, \{x_i\}_1^{2n}; R, A) \times \mathcal{S}_{2,\infty}(A^2 \times I, \{x_i\}_1^{2n}; R, A) \longrightarrow R[z].$$

Given $b_i, b_j \in B_n$, glue $b_i$ with the inversion of $b_j$ along the marked circle, respecting the labels of the marked points. The resulting picture has disjoint circles, which are either homotopically non-trivial or null homotopic. Then, $\langle b_i , b_j\rangle = z^k d^m$, where $k$ and $m$ denote the number of these circles, respectively.

The Gram matrix of type $B$ is defined as $G_n^{B} = (\langle b_i , b_j\rangle) _{1 \leq i, j \leq \binom{2n}{n}}.$ Its determinant $D_n^B$ is called the Gram determinant of type $B$.

\end{definition}

\begin{example}
The bilinear form $\langle b_5, b_6 \rangle$ is illustrated in Figure \ref{GRAMB:innerprod}.
\begin{figure}[ht]
\centering
$$\left\langle \vcenter{\hbox{\begin{overpic}[unit=1mm, scale = 0.4]{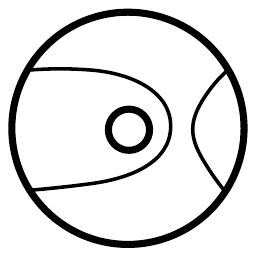}
\end{overpic} }}, \vcenter{\hbox{\begin{overpic}[unit=1mm, scale = .4]{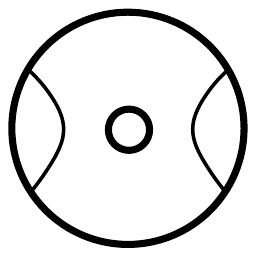}
\end{overpic} }} \right\rangle = \vcenter{\hbox{\begin{overpic}[unit=1mm, scale = .4]{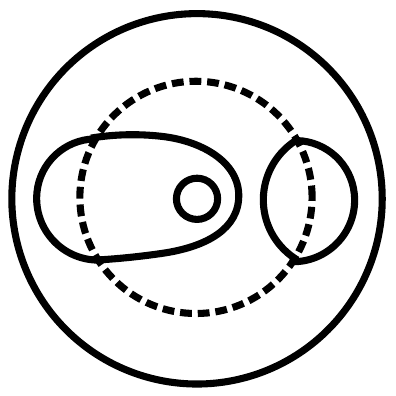}
\put(20,18){$d$}
\put(4, 18){$z$}
\end{overpic} }} = dz$$
    \caption{The inner product of two elements in $B_4$. In this case, the result is $dz$.}
    \label{GRAMB:innerprod}
    \end{figure}
\end{example}

\begin{example} 
Table \ref{typebmatrix} illustrates the Gram matrix $G_2^B$. We calculate the Gram determinant for the collection $B_2$.

\begin{table}[ht]
\centering
\begin{tabular}{ c|||c|c|c|c|c|c|c||c|c|c|c|c||c}
 \huge{$ \langle \ , \ \rangle$} & $\vcenter{\hbox{\includegraphics[scale = .25, height = 1cm]{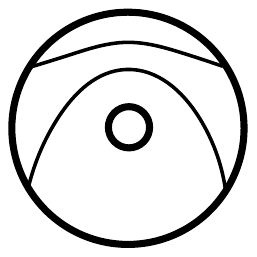}}}$ 
& $\vcenter{\hbox{\includegraphics[scale = .25,height = 1cm]{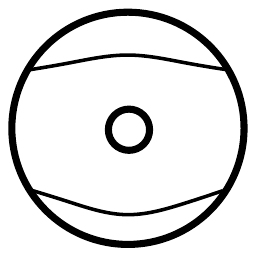}}}$
& $\vcenter{\hbox{\includegraphics[scale = .25,height = 1cm]{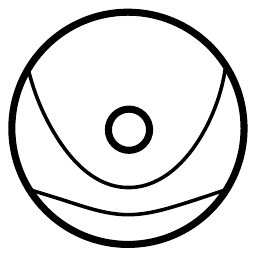}}}$
& $\vcenter{\hbox{\includegraphics[scale = .25,height = 1cm]{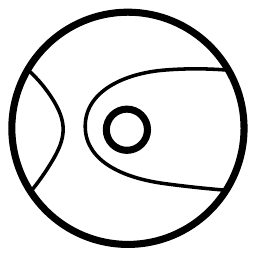}}}$
& $\vcenter{\hbox{\includegraphics[scale = .25,height = 1cm]{b6.pdf}}}$
& $\vcenter{\hbox{\includegraphics[scale = .25,height = 1cm]{b5.pdf}}}$\\
\hline \hline \hline 	
$\vcenter{\hbox{\includegraphics[scale = .25, height = 1cm]{b3.pdf}}}$        
& $ d^2$ & $ dz$ & $z^2$ & $z$ & $d$ & $z$ \\ \hline 
$\vcenter{\hbox{\includegraphics[scale = .25, height = 1cm]{b1.pdf}}}$          
& $ dz$ & $ d^2$ & $dz$ & $d$ & $z$ & $d$\\ \hline 
$\vcenter{\hbox{\includegraphics[scale = .25, height = 1cm]{b2.pdf}}}$ 
& $z^2$ & $ dz$ & $d^2$ & $z$ & $d$ & $z$ \\ \hline 
$\vcenter{\hbox{\includegraphics[scale = .25, height = 1cm]{b4.pdf}}}$  
& $ z$ & $ d$ & $z$ & $d^2$ & $dz$ & $z^2$ \\ \hline 
$\vcenter{\hbox{\includegraphics[scale = .25, height = 1cm]{b6.pdf}}}$  
& $d$ & $z$ & $d$ & $dz$ & $d^2$ & $dz$  \\ \hline 
$\vcenter{\hbox{\includegraphics[scale = .25, height = 1cm]{b5.pdf}}}$  
& $z$ & $d$ & $z$ & $z^2$ & $dz$ & $d^2$  \\ \hline 
\end{tabular}
\caption{The Gram matrix $G_2^B$.}
\label{typebmatrix}
\end{table}

We calculate the determinant of this matrix to be: 
\begin{align*}
D_2^B(d,z) &= -(d-z)^4(d+z)^4(-2+d^2+z)(-2+d^2+z)\\
& =(d^2-z^2)^4((d^2-2)^2-z^2).
\end{align*}
\label{ex182}
\end{example}

Recall that $T_n$ denotes the Chebyshev polynomial of the first kind defined recursively by the equation: $T_{n+1}(d) = d\cdot T_n(d) - T_{n-1}(d)$, with the initial conditions $T_0(d) = 2$ a,nd $T_1(d) = d$. The determinant given above can be expressed in terms of Chebyshev polynomials. In general, it is possible to rewrite the Gram determinant of type $B$ in terms of Chebyshev polynomials of the first kind for all $n$. 

\ 

The following closed formula was proven by Q. Chen and J. Przytycki and the proof involves the creation of a linear map on the basis $B_{n, 0}$ that uses the lollipop method to decorate the inner boundary component with the Jones-Wenzl idempotent. Then, the proof proceeds by showing that the image of the linear map under the basis is a subspace of dimension $\binom{2n}{n} - \binom{2n}{n-k}$ \cite{Ch-P}.

\begin{theorem}\cite{Ch-P,MS2} \label{CH-P}
Let $R = \mathbb{Z}[A^{\pm1}]$, then $D_n^{B}=  \prod \limits_{i=1}^n(T_i(d)^2-z^2)^{\binom{2n}{n-i}}.$
\label{thrm181}
\end{theorem}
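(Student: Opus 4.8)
The plan is to diagonalize the type $B$ Gram form by passing to the representation theory of the annular Temperley--Lieb algebra and building an eigenbasis organized by \emph{through-degree}: for a diagram $b\in B_n$, let its through-degree be the number of arcs that wrap essentially around the inner hole of the annulus. Gluing $b_i$ to the inversion of $b_j$ yields loops weighted by $d$ when contractible and by $z$ when essential, so the through-degree is exactly the device controlling the powers of $z$. First I would stratify $\mathcal{S}_{2,\infty}(A^2\times I,\{x_i\}_1^{2n})$ by through-degree, giving a filtration $W_0\supset W_1\supset\cdots\supset W_n\supset 0$ with $W_k$ spanned by the diagrams of through-degree at least $k$; a ballot/lattice-path count yields $\dim W_k=\binom{2n}{n-k}$, which is precisely the exponent appearing in the statement.

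The key tool is the lollipop decoration: for each $k$, cable the $k$ essential strands and decorate the inner boundary component with the $k$-th Jones--Wenzl idempotent $f_k$. Because $f_k$ annihilates any diagram factoring through fewer than $k$ strands (via $f_k e_i=0$), this defines a linear map realizing the projection onto $W_k$, whose image has dimension $\binom{2n}{n}-\binom{2n}{n-k}$ and whose kernel is the through-degree-$\geq k$ space of dimension $\binom{2n}{n-k}$. Performing the skein reduction, a bundle of $k$ essential loops carrying $f_k$ and closed up around the annular core evaluates to the first-kind Chebyshev polynomial $T_k(d)$, while the remaining essential core curve records the variable $z$.

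I would then compute the self-pairing of each $f_k$-orthogonalized vector. Gluing it to its inversion merges the idempotents through $f_k f_k=f_k$; each of the two halves contributes a contractible closure evaluating to $T_k(d)$, while the essential core loop closes in two homotopically distinct ways, producing the correction $\pm z$. Thus, relative to the previous level, the norm is multiplied by $T_k(d)^2-z^2=(T_k(d)-z)(T_k(d)+z)$, and the Jones--Wenzl orthogonalization makes the Gram matrix congruent to a diagonal matrix whose entry on a through-degree-$k$ vector is $\prod_{j=1}^{k}(T_j(d)^2-z^2)$, the through-degree-$0$ vectors contributing a unit. Since there are $\binom{2n}{n-k}-\binom{2n}{n-k-1}$ vectors of through-degree exactly $k$, the determinant telescopes to
\[
D_n^{B}=\prod_{k=1}^{n}\Big(\prod_{j=1}^{k}\big(T_j(d)^2-z^2\big)\Big)^{\binom{2n}{n-k}-\binom{2n}{n-k-1}}=\prod_{i=1}^{n}\big(T_i(d)^2-z^2\big)^{\binom{2n}{n-i}},
\]
in agreement with Example \ref{ex182}, where $n=2$ gives $(d^2-z^2)^4\big((d^2-2)^2-z^2\big)$.

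The hard part will be the diagonalization itself: proving that the $f_k$-decorated vectors are genuinely orthogonal across distinct through-degrees, so that the form is block-diagonal rather than merely block-triangular, and that each block is the single scalar $T_k(d)^2-z^2$ relative to the previous level. This hinges on a careful accounting of which glued loops become contractible versus essential once the idempotents are inserted, and especially on verifying that the essential closure around the annular core yields the \emph{first}-kind Chebyshev polynomial $T_k$ rather than the second-kind $\Delta_k$ that governs the disk (type $A$) case. Establishing this first-kind evaluation, together with the exact count $\dim W_k=\binom{2n}{n-k}$, is the delicate computational core of the argument.
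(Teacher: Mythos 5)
Your plan assembles the same ingredients as the Chen--Przytycki argument cited for Theorem \ref{CH-P} (stratification of $B_n$ by the number of arcs wrapping the hole, the lollipop decoration of the inner boundary by Jones--Wenzl idempotents, and the kernel dimension $\binom{2n}{n-k}$), but the step that actually produces the determinant --- your claimed diagonalization --- is both unproven and false as stated. You assert that after $f_k$-orthogonalization the form is congruent to a diagonal matrix with entry $\prod_{j=1}^{k}\bigl(T_j(d)^2-z^2\bigr)$ on each through-degree-$k$ vector and a unit on each through-degree-$0$ vector, and you obtain the formula by telescoping these entries; you then concede that establishing this is ``the hard part.'' It is not merely hard: it already fails for $n=1$, where $G_1^B=\begin{pmatrix} d & z\\ z & d\end{pmatrix}$ and any unitriangular (Gram--Schmidt) change of basis ordered by through-degree yields relative norms $d$ and $(d^2-z^2)/d$, the ratios of consecutive principal minors --- exactly the ratio phenomenon of Di Francesco's type $A$ orthogonalization in Theorem \ref{GRAMA:maintheorem}, where the diagonal entries are $\Delta_i/\Delta_{i-1}$, not clean polynomial products. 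Moreover no vector whose self-pairing is a unit of $\mathbb{Z}[A^{\pm1},z]$ exists even after extending scalars to $\mathbb{Q}(A,z)$ (compare valuations at $z=-d$), so your claimed diagonal shape cannot be reached by any determinant-$\pm1$ change of basis. Since your telescoping computation uses precisely those claimed entries, the formula does not follow from anything you have established; what holds without work is only that the product of the actual (messier) relative norms equals $D_n^B$, which is a tautology.

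There is also a concrete skein-theoretic error in your mechanism for producing first-kind Chebyshev polynomials: the closure of $f_k$ around the core of the annulus is the \emph{second}-kind polynomial $\Delta_k$ applied to the core curve, and capping off the hole evaluates it to $\Delta_k(d)$, never to $T_k(d)$. In the cited proof the first-kind polynomial enters differently: one specializes $z\mapsto\pm T_k(d)$, equivalently $z\mapsto \pm(-1)^k(A^{2k}+A^{-2k})$, and shows that the lollipop-decorated map has image of dimension $\binom{2n}{n}-\binom{2n}{n-k}$, hence a kernel of dimension $\binom{2n}{n-k}$ consisting of null vectors of the specialized form; this proves that $(T_k(d)^2-z^2)^{\binom{2n}{n-k}}$ divides $D_n^B$. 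Even granting all of that, equality still requires an argument that nothing further divides: the factors must be checked pairwise coprime, the total degree $\sum_{k=1}^n 2k\binom{2n}{n-k}=n\binom{2n}{n}$ must be matched against the degree of the determinant (whose top term $d^{n\binom{2n}{n}}$ comes from the diagonal entries $d^n$), and the leading coefficients compared. Your proposal contains neither a correct form of the divisibility step nor this degree-and-leading-coefficient comparison, and both would have to be supplied to turn it into a proof.
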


\begin{example}\label{example:mbn} An example of the bilinear form on two elements in $Mb_4$ is given below, where $\vcenter{\hbox{\begin{overpic}[scale = .2]{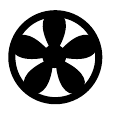}
\end{overpic}}}$ denotes a crosscap usually denoted by $\vcenter{\hbox{\begin{overpic}[scale = 1]{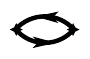} \end{overpic}}}.$
$$\left\langle \vcenter{\hbox{\begin{overpic}[scale = .2]{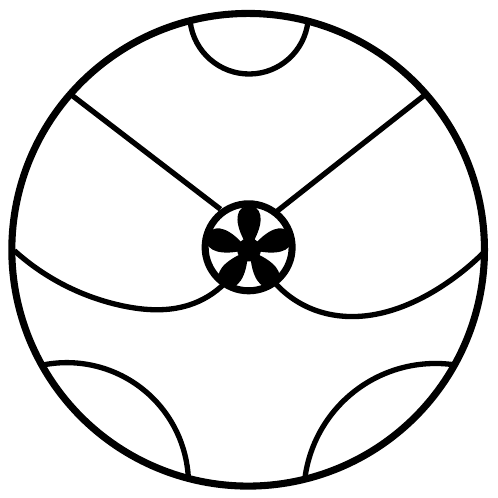}
\end{overpic}}}, \vcenter{\hbox{\begin{overpic}[scale = .2]{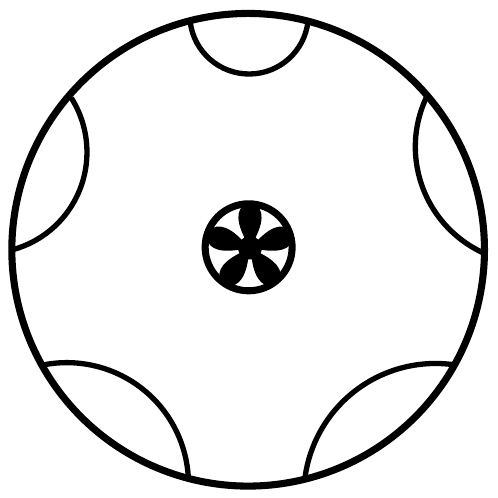}
\end{overpic}}} \right\rangle_{\mathit{Mb}} = \vcenter{\hbox{\begin{overpic}[scale = .2]{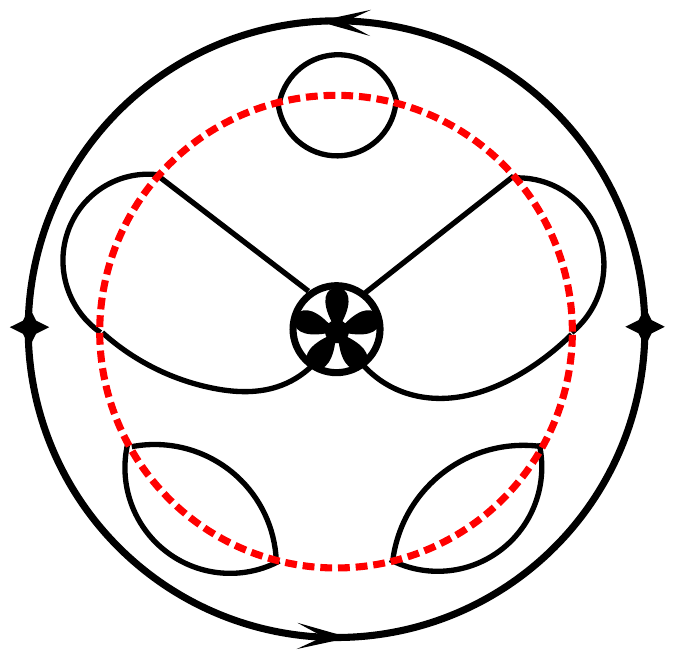}
\end{overpic} }} = d^4. $$
\end{example}

\section{Determinant of Type Mb}\

\noindent This Gram determinant originates from the study of crossingless connections on a M\"obius band. The basis of the relative Kauffman bracket skein module of the twisted $I$-bundle of the M\"obius band is infinite. The Gram matrix of type Mb is created from a finite sub-collection of this basis where we restrict to basis elements with no $z$ and $x$ curves. The bilinear form is defined through the identification of two M\"obius bands along their boundaries. This determinant is given in Definition \ref{TypeMB}.

\begin{definition}\label{TypeMB} 
Let $\mathit{Mb}_n = \{ m_1, \dots, m_{\sum_{k=0}^{n} \binom{2n}{k}}\}$  be the set of all diagrams of crossingless connections between $2n$ marked points on the boundary of the M\"obius band $\mathit{Mb} \ \hat{\times} \ \{0\}$ in $\mathit{Mb} \  \hat{\times} \ I$ with no simple closed curves. 
Define a bilinear form $\langle \ , \ \rangle_{\mathit{Mb}}$ on the elements of $\mathit{Mb}_n$ as follows:
$$ \langle \ , \ \rangle_{\mathit{Mb}} : \mathcal{S}_{2,\infty}(\mathit{Mb}\  \hat \times \ I, \{x_i\}_1^{2n}) \times \mathcal{S}_{2,\infty}(\mathit{Mb}\ \hat \times \  I, \{x_i\}_1^{2n}) \longrightarrow \mathbb{Z}[d,w,x,y,z].$$

Given $m_i, m_j \in \textit{Mb}_n$, identify the boundary component of $m_i$ with that of the inversion of $m_j$, respecting the labels of the marked points. The result is an element in $Kb \ \hat{\times} \ I$ containing only disjoint simple closed curves. The five homotopically distinct simple closed curves in the Klein bottle, including the homotopically trivial curve, are denoted by $x, y, z, w, d$ as illustrated in Figure \ref{Klein}. Then, $\langle m_i , m_j\rangle_{\mathit{Mb}} \coloneqq  d^mx^ny^kz^lw^h$ where $m,n,k,l$ and $h$ denote the number of these curves, respectively.

The Gram matrix of type $\mathit{Mb}$ is defined as $\ G_n^{\mathit{Mb}} = (\langle m_i , m_j\rangle_{\mathit{Mb}})_{1 \leq i, j \leq \sum_{k=0}^{n} \binom{2n}{k}}$ and its determinant $D_n^{\mathit{Mb}}$ is called the Gram determinant of type $\mathit{Mb}$.

\begin{figure}[h]
    \centering
    $$ \vcenter{\hbox{
\begin{overpic}[scale=.4]{KleinBottlecurves.pdf}
\put(38, 106){$d$}
\put(22, 70){$z$}
\put(43, 61){$x$}
\put(90, 24){$y$}
\put(54, 105){$w$}
\end{overpic} }}$$

     \caption{Klein bottle and its five homotopically distinct simple closed curve; the curve $w$ intersects both crosscaps, $y$ intersects the outer crosscap, $x$ intersects the inner crosscap, $z$ is a nontrivial curve that does not intersect the crosscaps, and $d$ is the trivial curve.}
    \label{Klein}
\end{figure}
\end{definition}
Qi Chen conjectured the following result for the Gram determinant of type $\mathit{Mb}$ and verified the conjecture for $n\leq 4$. Some work supporting this conjecture can be found in \cite{BIMP, BIMP2, PBIMW}. 

\begin{conjecture}[Chen] \label{Qi}\ \cite{Che}

Let $R = \mathbb{Z}[A^{\pm 1},w,x,y,z]$, $D_{n,i} = \prod\limits_{k=1+i}^n (T_{2k}(d)-2)^{\binom{2n}{n-k}}$, and $i$ represents the number of curves passing through the crosscap. Then
the Gram determinant of type {\it Mb} for $n \geq 1$, denoted by $D_n^{\mathit{Mb}}$, is:

 \begin{eqnarray*}
D^{\mathit{Mb}}_n(d,w,x,y,z) &=& \prod_{k=1}^n (T_k(d)+(-1)^kz)^{\binom{2n}{n-k} } \\
& & \prod\limits_{ \substack{k=1 \\ k\text{ odd }}}^n
\left((T_k(d) - (-1)^k z)T_k(w) -2xy\right)^{\binom{2n}{n-k}} \\
& & \prod\limits_{ \substack{k=1 \\ k\text{ even }}}^n
\left((T_k(d) - (-1)^kz)T_k(w)-2(2-z)\right)^{\binom{2n}{n-k}} \\
& & \prod_{i=1}^{n-1} D_{n,i}.
\end{eqnarray*}
\end{conjecture}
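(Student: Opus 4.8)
The plan is to adapt the Chen--Przytycki strategy behind Theorem \ref{CH-P} to the nonorientable setting, using the fact that identifying two copies of $\mathit{Mb}\ \hat{\times}\ I$ along their boundary produces $Kb\ \hat{\times}\ I$, in which the five curve classes $d,w,x,y,z$ live and evaluate the bilinear form. The engine of the proof is the family of Jones--Wenzl idempotents $p_k \in TL_n$: because $\{p_k\}$ refines the identity according to the number of through-strands, decorating the core of the Möbius band with $p_k$ (the lollipop decoration used on the inner boundary in type $B$, now pushed onto the orientation-reversing core) induces a filtration of $\mathit{Mb}_n$ by through-degree $k=0,\dots,n$. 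The first step is to make this decomposition precise and to produce a change of basis under which $G_n^{\mathit{Mb}}$ becomes block diagonal, with one block $G_{n,k}$ for each $k$ and with block sizes controlled by the binomials $\binom{2n}{n-k}$, exactly as the multiplicities in the conjectured formula suggest.

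For each fixed $k$ I would then evaluate the operator obtained by pushing the $p_k$-decorated strands once around the core. A single traversal of the nonorientable core reverses orientation, so each through-strand contributes a sign; this is the structural source of the factor $(-1)^k$ attached to $z$ in $T_k(d)+(-1)^kz$ and $T_k(d)-(-1)^kz$, in contrast to the symmetric pair $T_k(d)\pm z$ of the annular case. Strands that instead wind the core twice see the orientable double cover, and hence contribute the doubled Chebyshev value $T_{2k}(d)$; using the identity $T_{2k}(d)=T_k(d)^2-2$ one has $T_{2k}(d)-2=(T_k(d)-2)(T_k(d)+2)$, which I expect to be the origin of the $(T_{2k}(d)-2)$ terms in $D_{n,i}$. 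Within each through-degree block the computation should split according to how the strand-ends meet the crosscaps: configurations avoiding both crosscaps give the $z$-only factor $T_k(d)+(-1)^kz$, while configurations meeting them give the two crosscap factors $(T_k(d)-(-1)^kz)T_k(w)-2xy$ for odd $k$ and $(T_k(d)-(-1)^kz)T_k(w)-2(2-z)$ for even $k$, the parity reflecting whether an odd or even number of arcs is absorbed.

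The decisive algebraic input is an explicit crosscap evaluation, inside $\mathcal{S}_{2,\infty}(Kb\ \hat{\times}\ I)$, of a Jones--Wenzl idempotent pushed across one and across both crosscaps; this is the Möbius analogue of the annular wrapping relation that sends $p_k$ to $T_k(d)$ in type $B$, and it should produce the polynomials $T_k(w)$ together with the correction terms $-2xy$ and $-2(2-z)$. Assembling the resulting block determinants with the multiplicities $\binom{2n}{n-k}$ recovers the first three products of the conjecture. It remains to account for the residual factor $\prod_{i=1}^{n-1}D_{n,i}$, which, reindexed, equals $\prod_{k=2}^{n}(T_{2k}(d)-2)^{(k-1)\binom{2n}{n-k}}$; since $i$ counts the curves threading the crosscap, I would interpret this as a secondary filtration of the degree-$k$ sector into $k-1$ internal modes, each winding the core a second time and therefore carrying the eigenvalue $T_{2k}(d)-2$.

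The hard part will be exactly this residual factor, which has no counterpart in the type $B$ determinant and shows that the Möbius block structure is genuinely doubly-indexed by $(k,i)$ rather than singly-indexed by $k$. Controlling the simultaneous interaction of the Jones--Wenzl idempotents with both crosscaps---so that the blocks decouple cleanly along both indices and the $(T_{2k}(d)-2)$ eigenvalues appear with precisely multiplicity $(k-1)\binom{2n}{n-k}$---is the delicate and presently open step. A direct Gram--Schmidt computation, matching the factorization for successive $n$, would confirm the small cases already verified by Chen, but a conceptual proof requires a crosscap calculus for the Jones--Wenzl idempotents that, to my knowledge, has not yet been fully developed.
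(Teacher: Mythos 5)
The statement you are addressing is not a theorem of the paper: it is Chen's conjecture (Conjecture~\ref{Qi}), which the paper states without proof, records as verified only for $n\leq 4$, and supports only indirectly through the divisibility results of Propositions~\ref{GRAMMB:Prop1}, \ref{GRAMMB:Prop2} and \ref{GRAMB} and, for the $(\mathit{Mb})_1$ variant, Lemma~\ref{mainlemma} and Theorem~\ref{Maintheorem}. So there is no proof in the paper to compare against, and --- more to the point --- your text is not a proof either; it is a research program, and you concede as much in your final paragraph. The concrete gaps are these. First, your proposed block diagonalization of $G_n^{\mathit{Mb}}$ by through-degree requires decorating the core of the M\"obius band with the Jones--Wenzl idempotent $p_k$. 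Unlike the inner boundary circle used in the type $B$ lollipop argument behind Theorem~\ref{CH-P}, the core of $\mathit{Mb}$ is one-sided: its regular neighborhood is again a M\"obius band, not an annulus, so ``pushing $p_k$ onto the core'' is not a defined operation in the existing skein calculus. This is exactly the ``crosscap calculus'' you defer to future work, yet every subsequent step of your argument presupposes it. Second, the eigenvalue claims --- the sign $(-1)^k$, the appearance of $T_k(w)$, and the correction terms $-2xy$ and $-2(2-z)$ split by parity of $k$ --- are read backwards from the conjectured answer rather than derived from any wrapping relation; no computation is offered that would produce them, and the heuristic ``one traversal reverses orientation, two traversals see the double cover'' is not a skein-theoretic argument. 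Third, while your reindexing $\prod_{i=1}^{n-1}D_{n,i}=\prod_{k=2}^{n}\bigl(T_{2k}(d)-2\bigr)^{(k-1)\binom{2n}{n-k}}$ is arithmetically correct, the ``secondary filtration into $k-1$ internal modes'' that is supposed to produce these multiplicities is an interpretation of the formula, not a construction; you yourself identify this residual factor as the part with no type $B$ counterpart and as open.

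In short, the conjecture remains open, and your proposal establishes nothing beyond what is already known: it sketches a plausible route (one consistent in spirit with how the paper attacks the tractable pieces, namely row/column operations and block-triangular decompositions as in the proof of Lemma~\ref{mainlemma}), but each load-bearing step is an unproven assertion. If you wish to make genuine progress along these lines, the first well-posed target is not the full determinant but the missing lemma itself: formulate and prove, inside $\mathcal{S}_{2,\infty}(Kb\ \hat{\times}\ I)$, the evaluation of a $k$-strand Jones--Wenzl-decorated cable passed through one crosscap and through both, in terms of $d,w,x,y,z$. Until that exists, the block structure, the eigenvalues, and the multiplicities in your outline are all conjectural, and the proposal cannot be accepted as a proof.
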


\begin{proposition}\cite{BIMP2}\label{GRAMMB:Prop1}
$D^{\mathit{Mb}}_n$ is divisible by $(d-z)^{\binom{2n}{n-1}}$.
\end{proposition}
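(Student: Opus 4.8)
\section*{Proof proposal}

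The plan is to convert the divisibility statement into a rank estimate modulo $d-z$ and then to produce the required kernel vectors geometrically. First I would localize the coefficient ring $R=\mathbb{Z}[A^{\pm1},w,x,y,z]$ at the ideal $(d-z)$; since $d=-A^2-A^{-2}$ is fixed, $d-z$ is a genuine irreducible element (linear in $z$), the quotient $R/(d-z)\cong \mathbb{Z}[A^{\pm1},w,x,y]$ is a domain, and $(d-z)$ is a height-one prime of the regular ring $R$, so $R_{(d-z)}$ is a discrete valuation ring with uniformizer $t=d-z$. Over this DVR the matrix $G_n^{\mathit{Mb}}$ has a Smith normal form $U\,\mathrm{diag}(t^{a_1},\dots,t^{a_N})\,V$ with $U,V$ invertible and $N=\sum_{k=0}^n\binom{2n}{k}$. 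Because the $t$-adic valuation of $\det G_n^{\mathit{Mb}}$ equals $\sum_i a_i\ge\#\{i:a_i\ge 1\}$, and this last quantity is exactly the nullity of the reduced matrix $\overline{G}:=G_n^{\mathit{Mb}}\bmod (d-z)$ over the field $F=\mathrm{Frac}(R/(d-z))$, it suffices to prove
\[
\dim_F \ker \overline{G}\ \ge\ \binom{2n}{n-1}.
\]
Equivalently, I must exhibit $\binom{2n}{n-1}$ skein elements $v=\sum_i c_i m_i$ that are $F$-linearly independent modulo $(d-z)$ and satisfy $\langle v,m_j\rangle_{\mathit{Mb}}\equiv 0 \pmod{d-z}$ for every $m_j\in \mathit{Mb}_n$.

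Second, I would build these kernel vectors from the degeneration occurring at $d=z$: once $d=z$, the form can no longer distinguish a null-homotopic circle (value $d$) from a nontrivial circle disjoint from both crosscaps (value $z$). The mechanism is a local \emph{loop-exchange} relation. Fix a collar annulus $A\subset \mathit{Mb}\,\hat{\times}\,\{0\}$ that contains all $2n$ marked points and is disjoint from the crosscap. Along an arc parallel to $\partial\mathit{Mb}$ inside $A$ there are two nearby completions that, after gluing to a test diagram $\overline{m_j}$ in the Klein bottle, close up either to a trivial circle or to a $z$-curve. Letting $v$ be the resulting combination, each glued picture $\langle v,m_j\rangle_{\mathit{Mb}}$ is a multiple of $(d-z)$, since the two collections of Klein-bottle curves being compared are identical except that a single circle is of type $d$ in one and of type $z$ in the other.

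Third, I would organize these relations so as to match the count $\binom{2n}{n-1}$ and to guarantee independence, by exploiting the analogy with Type $B$. On collar-supported diagrams the Type $\mathit{Mb}$ form agrees, modulo $(d-z)$, with the Type $B$ form of Section~\ref{typeB}, and by Theorem~\ref{CH-P} the factor $T_1(d)^2-z^2=(d-z)(d+z)$ occurs in $D_n^B$ with multiplicity $\binom{2n}{n-1}$. At $d=z$ this is the only vanishing factor of $D_n^B$ (for $i\ge 2$ one has $T_i(d)^2-d^2\neq 0$), so the radical of the Type $B$ form at $d=z$ has dimension exactly $\binom{2n}{n-1}$. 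Transporting a basis of that radical into $\mathit{Mb}_n$ through the collar produces $\binom{2n}{n-1}$ candidate kernel vectors, and their $F$-linear independence in the larger basis follows from a triangular ordering on $\mathit{Mb}_n$ compatible with the leading terms of the loop-exchange combinations.

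I expect the main obstacle to be the second step, namely verifying that each loop-exchange vector is orthogonal modulo $(d-z)$ to the \emph{crosscap-meeting} basis diagrams, not merely to the collar ones. For collar--collar pairings the reduction to the Type $B$ computation is transparent, but when $m_j$ threads a crosscap one must check that the circle created by the exchanged arc remains of type $d$ or $z$ (never $x$, $y$, or $w$) and that the remaining arcs of $m_j$ close up identically in the two glued diagrams; controlling this curve-by-curve matching inside the twisted $I$-bundle, while simultaneously keeping the kernel dimension exactly $\binom{2n}{n-1}$, is the delicate part of the argument.
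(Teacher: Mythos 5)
Your step 1 is sound: localizing at the prime $(d-z)$ and invoking the Smith normal form over the resulting DVR correctly reduces the proposition to showing that $G_n^{\mathit{Mb}}$ reduced modulo $(d-z)$ has nullity at least $\binom{2n}{n-1}$; and your step 2 identifies the right geometric source of kernel vectors, namely differences $e_{m_1}-e_{m_2}$ of diagrams in $\mathit{Mb}_{n,0}$ that agree except that one arc is rerouted to the other side of the crosscap (this is exactly the pairing used in \cite{BIMP2}, and the same triples appear in this paper's proof of Lemma~\ref{mainlemma}, phrased there as determinant-preserving column operations rather than kernel vectors). The first genuine gap is in step 3. From Theorem~\ref{CH-P} you know that the $(d-z)$-adic valuation of $D_n^B$ is $\binom{2n}{n-1}$, and you conclude that the radical of the Type $B$ form modulo $(d-z)$ has dimension \emph{exactly} $\binom{2n}{n-1}$. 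That inference runs the wrong way: your own inequality, valuation $=\sum_i a_i\ \ge\ \#\{i:a_i\ge1\}=$ nullity, gives only the \emph{upper} bound $\binom{2n}{n-1}$ on the nullity; a lower bound on a kernel can never be extracted from a determinant formula alone (consider $\mathrm{diag}(t^2,1)$). Since your count of independent kernel vectors, hence the entire exponent, is routed through this claim, the proposal does not yet prove the statement even for crosscap-avoiding diagrams. The repair is combinatorial, not determinantal: the loop-exchange pairs are in bijection with $\mathit{Mb}_{n,1}$ (reroute the unique crosscap arc of $m\in \mathit{Mb}_{n,1}$ in its two possible ways), so there are exactly $\binom{2n}{n-1}$ of them, and the difference vectors are linearly independent because the graph on $\mathit{Mb}_{n,0}$ whose edges are these pairs is a forest: an element of $\mathit{Mb}_{n,0}$ is a non-crossing matching together with a choice of complementary region containing the crosscap, an edge moves the crosscap across one arc, and the region-adjacency graph of a non-crossing matching is a tree. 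Edge-difference vectors are independent precisely when the edge set is acyclic, so this fact cannot be waved through with an unspecified ``triangular ordering.''

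The second gap is the one you flag yourself, and it is the actual heart of the proposition: verifying that each loop-exchange vector pairs to $0$ modulo $(d-z)$ against \emph{every} element of $\mathit{Mb}_n$, including diagrams with several arcs through the crosscap. Moreover the check you propose is not the right one: it is false in general that the circle through the exchanged arc ``remains of type $d$ or $z$ (never $x$, $y$, or $w$).'' When the test diagram routes crosscap arcs into the endpoints of the exchanged arc, that circle does meet a crosscap; the correct claim is that it then has the \emph{same} type in both glued pictures (it meets the outer crosscap the same odd number of times and the inner one not at all, hence is a $y$-curve in both, there being a unique isotopy class of such curves in the Klein bottle), while all remaining circles coincide, so the two matrix entries are equal and their difference is $0$. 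Only when the distinguished circle is of type $d$ or $z$ do the two entries differ, and then they differ exactly by a $d\leftrightarrow z$ swap, as one checks from the classification of simple closed curves in $Kb$. This trichotomy --- sub-rows of the form $u\,(d,z)$, $u\,(z,d)$, or with equal entries --- is precisely the case analysis carried out in the proof of Lemma~\ref{mainlemma} for $(\mathit{Mb})_1$ and in \cite{BIMP2} for all of $\mathit{Mb}_n$; without it, and without the forest argument above, your proposal is a correct strategy but not a proof.
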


\begin{proposition}\label{GRAMMB:Prop2}\cite{BIMP}
$D^{\mathit{Mb}}_n$  is divisible by $(w(d+z)-2xy)^{\binom{2n}{n-1}}$.
\end{proposition}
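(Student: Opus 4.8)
The plan is to prove divisibility by exhibiting $\binom{2n}{n-1}$ null vectors of the Gram matrix $G_n^{\mathit{Mb}}$ modulo the polynomial $p := w(d+z)-2xy$ and then to invoke a standard determinant-divisibility lemma. First I would record the purely algebraic reduction. Work over $R=\mathbb{Z}[A^{\pm1},w,x,y,z]$ with $d=-A^2-A^{-2}$, a UFD. Writing $p=(d+z)w-2xy$ as a degree-one polynomial in $w$, its coefficients $d+z$ and $-2xy$ share no common irreducible factor in $\mathbb{Z}[A^{\pm1},x,y,z]$, so $p$ is primitive and irreducible, hence prime, and $\bar R:=R/(p)$ is an integral domain. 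The lemma I would use is the following: if $G$ is an $N\times N$ matrix over $R$ and there exist $v_1,\dots,v_k\in R^N$ whose reductions $\bar v_1,\dots,\bar v_k$ are linearly independent over the fraction field of $\bar R$ and satisfy $Gv_s\equiv 0\pmod p$ for every $s$, then $p^k\mid\det G$. This is immediate: complete the $v_s$ to a matrix $P$ over $R$ with $\det P\not\equiv0\pmod p$; then the first $k$ columns of $GP$ are divisible by $p$, so $p^k\mid\det(GP)=\det G\cdot\det P$, and since $p\nmid\det P$ we conclude $p^k\mid\det G$. Thus everything reduces to producing $\binom{2n}{n-1}$ such null vectors for $G=G_n^{\mathit{Mb}}$.

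Next comes the geometric core. Modulo $p$ we have the identification $w(d+z)=2xy$, and I would read this as a local skein relation near the crosscap of the M\"obius band in the sense of Definition \ref{TypeMB}. Concretely, I would isolate a basis element $m\in\mathit{Mb}_n$ carrying a single arc that runs through the crosscap and analyze, when $m$ is paired with an arbitrary $m_j$, how that arc closes up in the resulting Klein bottle: it contributes a weight $w$, $x$, or $y$ according to whether the dual arc of $\overline{m_j}$ also pierces one or both crosscaps, while the complementary (crosscap-avoiding) closures contribute $d$ and $z$. The content of the relation $w(d+z)-2xy=0$ is exactly that a specific two-term $R$-linear combination of basis diagrams --- one with the distinguished arc pushed \emph{across} the crosscap and one with it pushed \emph{around} --- becomes orthogonal, column by column, to every $m_j$ once this identity is imposed. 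I would establish this by computing $\langle m',m_j\rangle_{\mathit{Mb}}$ and $\langle m'',m_j\rangle_{\mathit{Mb}}$ for the two diagrams and verifying that the prescribed combination lies in $(p)$, using that the gluing alters the curve count only locally near the crosscap; this is the same $k=1$ factor that appears in the odd product of Conjecture \ref{Qi} and is the natural companion of the $(d-z)$ relation of Proposition \ref{GRAMMB:Prop1}.

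I would then organize these relations into a family of $\binom{2n}{n-1}$ null vectors $\{v\}$, indexed by the crossingless connections that carry one distinguished through-crosscap arc together with a crossingless matching of the remaining endpoints --- a set whose cardinality I would identify with $\binom{2n}{n-1}$, matching both the stated exponent and the $k=1$ multiplicity in Conjecture \ref{Qi} (the analogue, in this setting, of the kernel count in the Chen--Przytycki proof of Theorem \ref{CH-P}). To apply the lemma I must check that the reductions $\bar v$ are linearly independent over $\bar R$, and for this I would order the index set so that each $v$ possesses a leading basis element --- the one with its arc in the unslid position --- occurring in no earlier vector, which yields a triangular pattern and hence independence.

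The main obstacle is the middle step: pinning down the precise local relation near the crosscap that produces the factor $w(d+z)-2xy$ in exactly this normalization (rather than, say, $w(d-z)-2xy$), and verifying that the associated combinations are genuinely orthogonal to \emph{all} of $\mathit{Mb}_n$. The bookkeeping for how a through-crosscap arc resolves into $w$-, $x$-, $y$-, $z$-, or $d$-type curves in the glued Klein bottle is delicate, precisely because the orientation-reversing geometry forces the two crosscaps to interact; this is where care is essential. By contrast, once the vectors are correctly defined, the linear-independence count should follow routinely from the triangularity argument.
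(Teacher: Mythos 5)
Your algebraic frame is sound, and it is in substance the same linear algebra as the paper's: what you phrase as a null-vector-plus-divisibility lemma appears in the paper as column operations (the proof of Lemma \ref{mainlemma}, which explicitly adapts the \cite{BIMP} proof of this proposition), with the null vectors indexed by $Mb_{n,1}$ giving the exponent $\binom{2n}{n-1}$. The genuine gap is the middle step, which you flag as the ``main obstacle'' and then mis-describe in a way that cannot be repaired as stated: you posit a \emph{two-term} combination of diagrams (differing only in the distinguished crosscap arc) that becomes orthogonal to everything modulo $p=w(d+z)-2xy$. No such combination exists. A crosscap arc $\alpha$ in $m\in Mb_{n,1}$ has \emph{two} push-offs $m_1,m_2\in Mb_{n,0}$, one on each side, and pairing the triple against $c\in Mb_{n,0}$ produces sub-rows $u(d,z,y)$ and $u'(z,d,y)$, with both patterns occurring (take $c=m_1$ and $c=m_2$). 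If $\alpha e_m+\beta e_{m_1}$ were null mod $p$, subtracting the two resulting conditions gives $\beta(d-z)\equiv 0 \pmod{p}$; since $d-z$ and $p$ are non-associate primes, $\beta\equiv 0$, and then $\alpha y\equiv 0$ forces $\alpha\equiv 0$. The same argument kills the supports $\{m,m_2\}$ and $\{m_1,m_2\}$. The correct null vector is the \emph{three}-term combination with non-constant coefficients
\begin{equation*}
v_m \;=\; (d+z)\,e_m \;-\; y\,\bigl(e_{m_1}+e_{m_2}\bigr),
\end{equation*}
i.e.\ exactly the column operation $m\mapsto (d+z)m-y(m_1+m_2)$ of Lemma \ref{mainlemma}: against $c\in Mb_{n,0}$ it gives $u\bigl((d+z)y-y(d+z)\bigr)=0$, and against a diagram with a crosscap arc it gives either $u\bigl(xy(d+z)-yx(d+z)\bigr)=0$ or $u\bigl(w(d+z)-2xy\bigr)\equiv 0\pmod{p}$.

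Two further points would still be needed to get the proposition as stated, which concerns the full determinant $D_n^{Mb}$ rather than the $(Mb)_1$ one: the orthogonality of $v_m$ must be checked against \emph{all} rows of $G_n^{Mb}$, including diagrams with two or more arcs through the crosscap, where the pairings also involve $w$-curves; this case analysis is the real content of the \cite{BIMP} proof and only its restriction to rows in $Mb_{n,0}\cup Mb_{n,1}$ is reproduced in Lemma \ref{mainlemma}. On the other hand, once $v_m$ is in hand, your independence argument does go through essentially as you describe: distinct $v_m$ are supported on distinct elements of $Mb_{n,1}$ (all remaining support lies in $Mb_{n,0}$), and the coefficient there is $d+z\not\equiv 0\pmod{p}$, so the reductions are linearly independent and your divisibility lemma applies.
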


\begin{proposition}\label{GRAMB}\cite{PBIMW}
$D_n^{\mathit{Mb}}$ is divisible by $((d^2-2-z)(w^2-2)-2(2-z))^{\binom{2n}{n-2}}$.
\end{proposition}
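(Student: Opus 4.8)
The plan is to adapt the corank (null-vector) strategy behind Theorem~\ref{CH-P} and Propositions~\ref{GRAMMB:Prop1}--\ref{GRAMMB:Prop2} to the factor $p := (d^2-2-z)(w^2-2)-2(2-z)$, which is exactly the $k=2$ term (the even case with $T_2(d)=d^2-2$ and $T_2(w)=w^2-2$) of Conjecture~\ref{Qi}, and hence corresponds to two arcs threading the crosscap. Write $N=\sum_{k=0}^n\binom{2n}{k}$ for the size of $G_n^{\mathit{Mb}}$. Since $R=\mathbb{Z}[A^{\pm1},w,x,y,z]$ is a regular UFD and $p$ is irreducible (a direct check, viewing it as a quadratic polynomial in $w$), the localization $R_{(p)}$ is a discrete valuation ring; by the Smith normal form over $R_{(p)}$, the divisibility $p^{\,c}\mid D_n^{\mathit{Mb}}$ holds as soon as the reduction of $G_n^{\mathit{Mb}}$ modulo $p$ has corank at least $c$. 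Thus it suffices to produce a $\binom{2n}{n-2}$-dimensional subspace of $\ker\!\big(G_n^{\mathit{Mb}}\bmod p\big)$.

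These null vectors will come from the lollipop/Jones--Wenzl decoration method. First I would filter $\mathit{Mb}_n$ by \emph{threading degree}, the number of arcs running through the crosscap, and construct the linear map $\varphi$ that, on diagrams with at least two threading strands, decorates two of them with the second Jones--Wenzl idempotent $f_2=1_2-d^{-1}e_1$, using $e_1f_2=f_2e_1=0$. The heart of the matter is the eigenvalue: evaluating $\langle\varphi(m_i),m_j\rangle_{\mathit{Mb}}$ glues two copies of the M\"obius band along their boundary into the Klein bottle of Figure~\ref{Klein}, where the $f_2$-colored strand closes into an essential colored curve. Resolving each crosscap by the crosscap expansion and recoupling with $f_2$ should collapse this evaluation to the scalar $p$: the factor $T_2(w)=w^2-2$ records the colored holonomy through both crosscaps (the $w$-curve), the factor $d^2-2-z=T_2(d)-z$ records the competing annular ($z$) and trivial ($d$) closures, and the inhomogeneous term $-2(2-z)$ records the two degenerate reconnections of the colored strand across the crosscaps. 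Modulo $p$ this scalar vanishes; moreover any pairing that caps the colored strands vanishes identically, since $f_2e_1=0$. Hence every vector in the image of $\varphi$ lands in the radical of $G_n^{\mathit{Mb}}\bmod p$.

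To obtain the exponent I would then count. As in \cite{Ch-P}, the factor $p$ appears in the eigenvalue of every threading-degree sector of degree at least two, so the radical modulo $p$ is spanned by the decorated images coming from all such sectors; their total dimension is the telescoping sum $\sum_{j\ge2}\big(\binom{2n}{n-j}-\binom{2n}{n-j-1}\big)=\binom{2n}{n-2}$, matching the predicted exponent. Linear independence over $R/(p)$ follows from a triangularity argument: ordered by threading degree, $\varphi$ is unitriangular up to strictly lower-degree terms, because the identity summand of $f_2$ preserves the leading diagram while the $e_1$ summand lowers the degree. Combining the corank bound with the independence of these $\binom{2n}{n-2}$ vectors yields $p^{\binom{2n}{n-2}}\mid D_n^{\mathit{Mb}}$.

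The principal obstacle is the exact eigenvalue computation of the second paragraph. Whereas $T_2(d)$ and $T_2(w)$ are the expected annular and through-both-crosscaps holonomies of an $f_2$-colored curve, determining the inhomogeneous correction $-2(2-z)$ on the nose --- signs and coefficient included, not merely up to a unit --- requires a careful analysis of how $f_2$ recouples through the \emph{two} crosscaps of the Klein bottle, where the colored strand admits two distinct non-orientable reconnections and where a trivial circle of value $d$ competes with an essential $z$-curve. By contrast, the filtration, the telescoping count, and the triangularity of $\varphi$ are routine adaptations of the type $B$ argument in \cite{Ch-P} and of Propositions~\ref{GRAMMB:Prop1}--\ref{GRAMMB:Prop2}.
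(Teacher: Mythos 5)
Your proposal is not yet a proof: the step on which everything rests --- that the Gram pairing of an $f_2$-decorated diagram against any basis element evaluates, after gluing into the Klein bottle, to a multiple of $p=(d^2-2-z)(w^2-2)-2(2-z)$ --- is never established, and you yourself flag it as ``the principal obstacle.'' Everything that is genuinely verifiable in your write-up (irreducibility of $p$, the localization/Smith-normal-form reduction of the divisibility to a corank bound, the telescoping count $\sum_{j\ge 2}\bigl(\binom{2n}{n-j}-\binom{2n}{n-j-1}\bigr)=\binom{2n}{n-2}$) is scaffolding around this missing computation, and your description of where each term of $p$ ``should'' come from is reverse-engineered from the shape of Conjecture~\ref{Qi} rather than derived from the skein calculus. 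This matters because the non-orientable recoupling is exactly where such arguments are delicate: the evaluation of an idempotent-colored curve passing through the two crosscaps of a Klein bottle is not a formal consequence of the annular (type $B$) computation, and the inhomogeneous term $-2(2-z)$ has no analogue there. A second unproved claim compounds the gap: your telescoping count requires that decorated vectors from \emph{every} threading sector $j\ge 2$ be annihilated modulo $p$, but in the type $B$ argument of \cite{Ch-P} the eigenvalue attached to the $f_k$-lollipop depends on $k$, so annihilation of the higher sectors by the single factor $p$ (the $k=2$ eigenvalue) is an additional computation, not a routine adaptation; without it you cannot reach the exponent $\binom{2n}{n-2}$ by your route.

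For comparison, the paper itself does not reprove this proposition (it quotes it from \cite{PBIMW}), but the divisibility arguments it does carry out --- the proof of Lemma~\ref{mainlemma}, adapted from the proof of Proposition~\ref{GRAMMB:Prop2} in \cite{BIMP} --- are elementary column operations: one groups the basis into small families of diagrams differing only in how designated arcs pass through the crosscap, and exhibits a linear combination of columns all of whose entries acquire the desired factor, e.g. $(d+z)m-y(m_1+m_2)$ producing $(w(d+z)-2xy)$. The analogous route for your factor works with the $\binom{2n}{n-2}$ diagrams having exactly two arcs through the crosscap and the finitely many resolutions of those two arcs, and it requires only evaluations of the bilinear form on concrete diagram pairs --- no Jones--Wenzl recoupling in a non-orientable surface. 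If you wish to salvage your idempotent approach, the Klein-bottle eigenvalue computation must be carried out explicitly; until then the argument is a plausible plan, not a proof.
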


\section{Determinant of Type $(Mb)_1$} \label{GramMB1}
The Gram matrix of type $\mathit{Mb}$ was created by taking a finite sub-collection of the basis of the relative Kauffman bracket skein module of the twisted $I$-bundle of the M\"obius band. However, the number of such elements increase exponentially as $n$ increases. In particular, computing the determinant for $n \geq 5$ has not been achieved due to the size of the matrices.  In this section we present a new Gram determinant created in \cite{IM} by using a sub-collection of $Mb_n$.

\begin{definition}\label{TypeMB1} 
Let $(\mathit{Mb}_{n})_1 = \mathit{Mb}_{n,0} \cup \mathit{Mb}_{n,1}$ where $\mathit{Mb}_{n,0} = \{ m_1, \dots, m_{\binom{2n}{n}}\}$ is the set of all diagrams of crossingless connections between $2n$ marked points on the boundary of $\mathit{Mb} \ \hat{\times} \ \{0\}$ whose arcs do not intersect the crosscap and $\mathit{Mb}_{n,1} = \{ m_1, \dots, m_{\binom{2n}{n-1}}\}$ is the set of all diagrams of crossingless connections between $2n$ marked points on the boundary of  $\mathit{Mb} \ \hat{\times} \ \{0\}$ with exactly one curve intersecting the crosscap. 
Define a bilinear form $\langle \ , \ \rangle_{\mathit{Mb}}$ on the elements of $(\mathit{Mb}_{n})_1$ by using the same bilinear form as type $\mathit{Mb}$, as follows: $$ \langle \ , \ \rangle_{\mathit{Mb}} : \mathcal{S}_{2,\infty}(\mathit{Mb}\  \hat \times \ I, \{x_i\}_1^{2n}) \times \mathcal{S}_{2,\infty}(\mathit{Mb}\ \hat \times \  I, \{x_i\}_1^{2n}) \longrightarrow \mathbb{Z}[d,w,x,y,z].$$

Given $m_i, m_j \in (\mathit{Mb}_{n})_1$, identify the boundary component of $m_i$ with that of the inversion of $m_j$, respecting the labels of the marked points. The result is an element in $Kb \ \hat{\times} \ I$ containing only disjoint simple closed curves. Then $\langle m_i , m_j\rangle_{\mathit{Mb}} :=  d^mx^ny^kz^lw^h$ where $m,n,k,l$ and $h$ denote the number of these curves, respectively.

The Gram matrix of type $(\mathit{Mb})_1$ is defined as  $G_n^{(\mathit{Mb})_1} = (\langle m_i , m_j\rangle_{\mathit{Mb}})_{1 \leq i, j \leq |(\mathit{Mb})_1|}$
and its determinant $D_n^{(\mathit{Mb})_1}$ is called the Gram determinant of type $(\mathit{Mb})_1$.
\end{definition}

\begin{example}\label{exampleMB1} $n=2$ yields the smallest Gram matrix of type $(Mb)_1$ that differs from type $Mb$. The set $(Mb_2)_1$ is illustrated in Figure \ref{fig:BasisM2}.

\begin{figure}[ht]
\[  \begin{minipage}{.5in} \includegraphics[width=\textwidth]{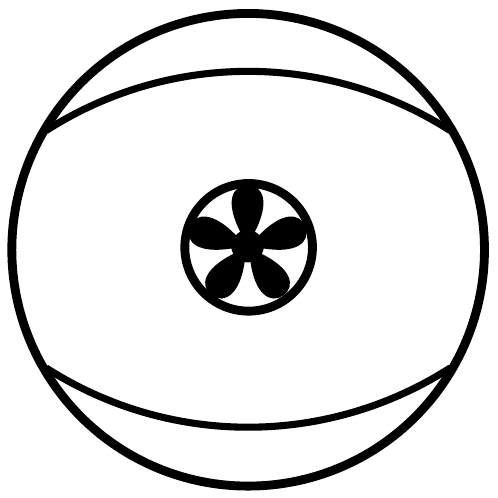} \vspace{-15pt} \[\pmb{(2 \ 4)}\] \end{minipage} 
               \qquad
        \begin{minipage}{.5in}\includegraphics[width=\textwidth]{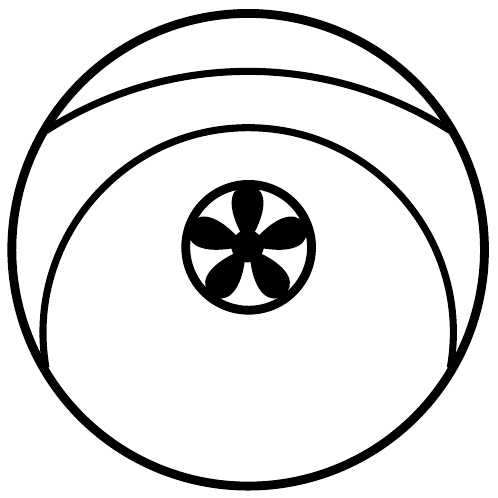} \vspace{-15pt} \[\pmb{(3 \ 4)}\] \end{minipage}
         \qquad
        \begin{minipage}{.5in}\includegraphics[width=\textwidth]{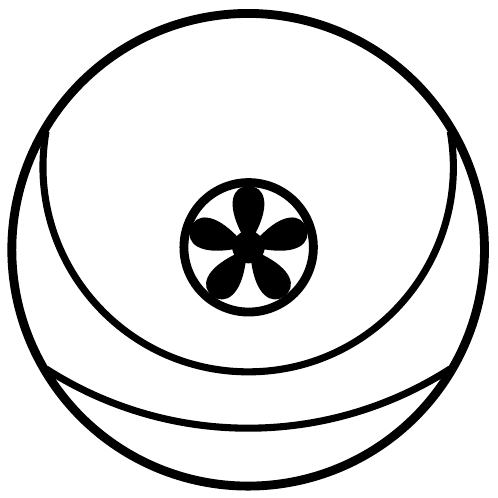} \vspace{-15pt} \[ \pmb{(2 \ 4)}\]\end{minipage}
        \qquad
        \begin{minipage}{.5in}\includegraphics[width=\textwidth]{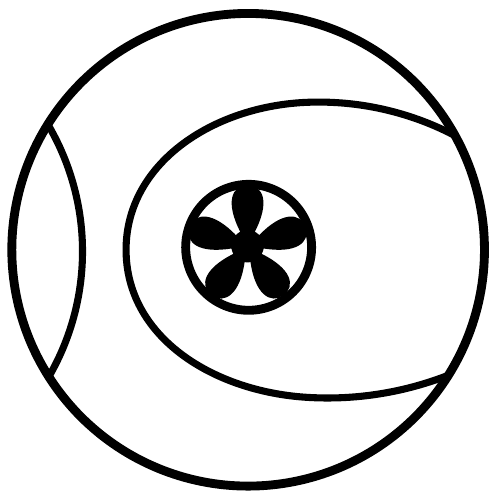} \vspace{-15pt} \[ \pmb{(1 \ 4)}\]\end{minipage}
        \qquad
        \begin{minipage}{.5in}\includegraphics[width=\textwidth]{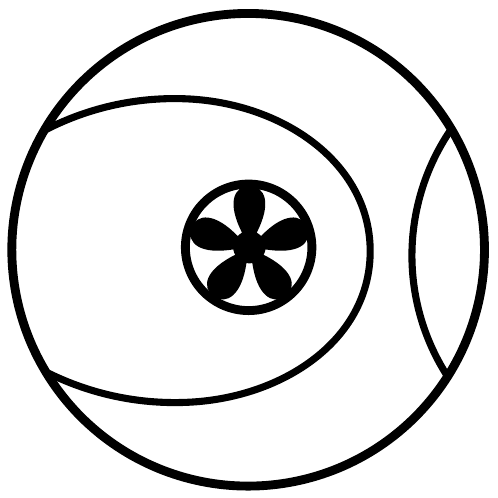} \vspace{-15pt} \[ \pmb{(2 \ 3)}\]\end{minipage}   
        \qquad
        \begin{minipage}{.5in}\includegraphics[width=\textwidth]{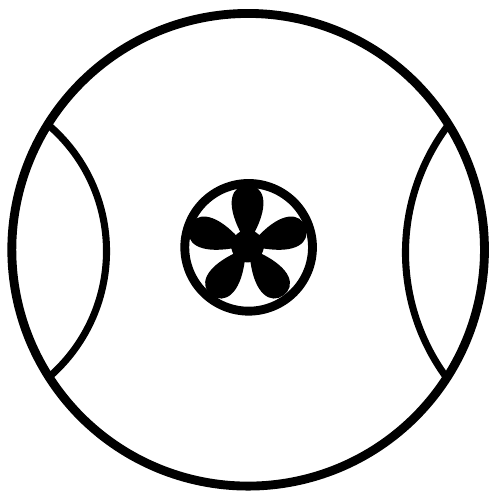} \vspace{-15pt} \[ \pmb{(1 \ 3)}\]\end{minipage} 
        \hfill \]\\
        \begin{minipage}{.5in}\includegraphics[width=\textwidth]{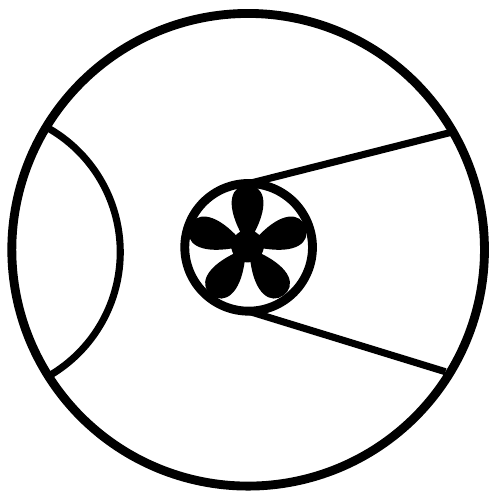} \vspace{-15pt} \[ \pmb{(1)}\]\end{minipage} 
\qquad 
\begin{minipage}{.5in}\includegraphics[width=\textwidth]{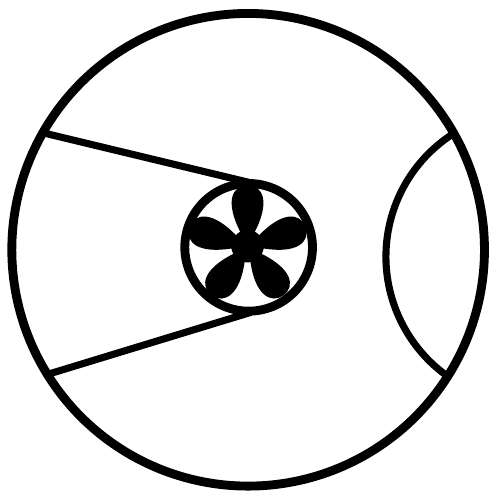} \vspace{-15pt} \[ \pmb{(3)}\]\end{minipage}   
        \qquad
        \begin{minipage}{.5in}\includegraphics[width=\textwidth]{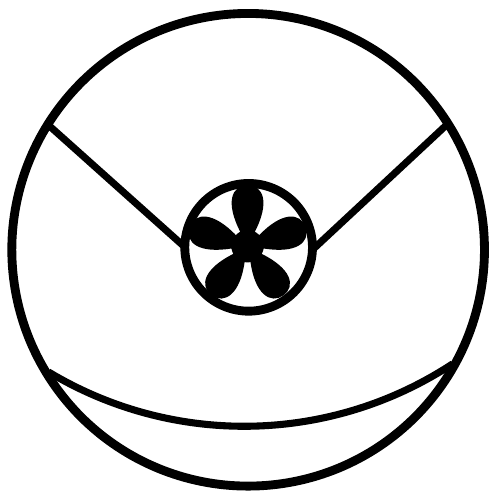} \vspace{-15pt} \[ \pmb{(2)}\]\end{minipage}   
        \qquad
        \begin{minipage}{.5in}\includegraphics[width=\textwidth]{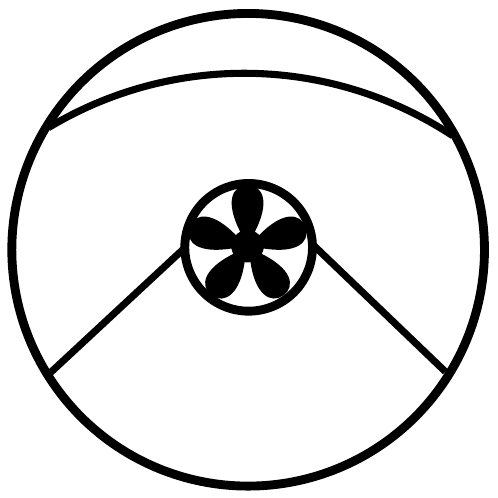} \vspace{-15pt} \[ \pmb{(4)}\]\end{minipage}   
    \caption{The set $(Mb_2)_1$.}
    \label{fig:BasisM2}
\end{figure}

{\centering
\resizebox{\columnwidth}{!}{
\begin{tabular}{ c|||c|c|c|c|c|c||c|c|c|c}
 $\langle \ , \ \rangle$ & $\vcenter{\hbox{\includegraphics[scale = .13, height = 0.7cm]{GMB2_2.pdf}}}$ 
& $\vcenter{\hbox{\includegraphics[scale = .13,height = 0.7cm]{GMB2_1.pdf}}}$
& $\vcenter{\hbox{\includegraphics[scale = .13,height = 0.7cm]{GMB2_3.pdf}}}$
& $\vcenter{\hbox{\includegraphics[scale = .13,height = 0.7cm]{GMB2_4.pdf}}}$
& $\vcenter{\hbox{\includegraphics[scale = .13,height = 0.7cm]{GMB2_6.pdf}}}$
& $\vcenter{\hbox{\includegraphics[scale = .13,height = 0.7cm]{GMB2_5.pdf}}}$
& $\vcenter{\hbox{\includegraphics[scale = .13,height = 0.7cm]{GMB2_10.pdf}}}$
& $\vcenter{\hbox{\includegraphics[scale = .13,height = 0.7cm]{GMB2_8.pdf}}}$
& $\vcenter{\hbox{\includegraphics[scale = .13,height = 0.7cm]{GMB2_9.pdf}}}$
& $\vcenter{\hbox{\includegraphics[scale = .13,height = 0.7cm]{GMB2_7.pdf}}}$ \\
\hline \hline \hline 	
$\vcenter{\hbox{\includegraphics[scale = .13, height = 0.7cm]{GMB2_2.pdf}}}$        
& $ d^2$ & $ dz$ & $z^2$ & $z$ & $d$ & $z$ & $dy$ & $y$ & $yz$ & $ y$  \\ \hline 
$\vcenter{\hbox{\includegraphics[scale = .13, height = 0.7cm]{GMB2_1.pdf}}}$          
& $ dz$ & $ d^2$ & $dz$ & $d$ & $z$ & $d$ & $dy$ & $y$ & $dy$ & $ y$  \\ \hline 
$\vcenter{\hbox{\includegraphics[scale = .13, height = 0.7cm]{GMB2_3.pdf}}}$ 
& $ z^2$ & $ dz$ & $d^2$ & $z$ & $d$ & $z$ & $yz$ & $y$ & $dy$ & $ y$  \\ \hline 
$\vcenter{\hbox{\includegraphics[scale = .13, height = 0.7cm]{GMB2_4.pdf}}}$  
& $ z$ & $ d$ & $z$ & $d^2$ & $dz$ & $z^2$ & $y$ & $yz$ & $y$ & $ dy$ \\ \hline 
$\vcenter{\hbox{\includegraphics[scale = .13, height = 0.7cm]{GMB2_6.pdf}}}$  
& $ d$ & $ z$ & $d$ & $dz$ & $d^2$ & $dz$ & $y$ & $dy$ & $y$ & $ dy$  \\ \hline 
$\vcenter{\hbox{\includegraphics[scale = .13, height = 0.7cm]{GMB2_5.pdf}}}$  
& $ z$ & $ d$ & $z$ & $z^2$ & $dz$ & $d^2$ & $y$ & $dy$ & $y$ & $ yz$  \\ \hline \hline
$\vcenter{\hbox{\includegraphics[scale = .13, height = 0.7cm]{GMB2_10.pdf}}}$  
& $ dx$ & $ dx$ & $xz$ & $x$ & $x$ & $x$ & $dw$ & $w$ & $xy$ & $w$ \\ \hline 
$\vcenter{\hbox{\includegraphics[scale = .13, height = 0.7cm]{GMB2_8.pdf}}}$  
& $x$ & $ x$ & $x$ & $xz$ & $dx$ & $dx$ & $w$ & $dw$ & $w$ & $ xy$  \\ \hline 
$\vcenter{\hbox{\includegraphics[scale = .13, height = 0.7cm]{GMB2_9.pdf}}}$  
& $ xz$ & $ dx$ & $dx$ & $x$ & $x$ & $x$ & $xy$ & $w$ & $dw$ & $ w$  \\ \hline 
$\vcenter{\hbox{\includegraphics[scale = .13, height = 0.7cm]{GMB2_7.pdf}}}$  
& $ x$ & $ x$ & $x$ & $dx$ & $dx$ & $xz$ & $w$ & $xy$ & $w$ & $ dw$ 
\end{tabular}}}
\captionof{table}{The Gram matrix $G_{2}^{(Mb)_1} .$}\label{Grammatrixn2}

\begin{eqnarray*}
D_2^{(\mathit{Mb})_1} &=& (d^2-4) d^2 (d - z)^4 (-2 + d^2 - z) (-2 + d^2 + z) (-d w + 
   2 x y - w z)^4 \\
   &=& (d-z)^4((d^2-2)+z) ((d+z)w-2xy)^4((d^2-2)-z)(d^2(d^2-4)) \\
   &=& (T_1(d)-z)^4(T_2(d)^2-z^2)((d+z)w-2xy)^4(T_4(d)-2).
\end{eqnarray*}
\end{example}

The following theorem was proven in \cite{IM} and it is the main result regarding the structure of the closed formula for the Gram determinant of type $(Mb)_1$.

\begin{theorem} \label{MainTheorem}\cite{IM} 
$$\prod\limits_{k=1}^n(T_k(d)+(-1)^k z)^{\binom{2n}{n-k}} \mbox{ divides } D^{(\mathit{Mb})_1}_n(d, z, x, y, w).$$
\end{theorem}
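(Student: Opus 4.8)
The plan is to prove divisibility one irreducible factor at a time. Set $R=\mathbb{Z}[A^{\pm1},w,x,y,z]$ and $p_k := T_k(d)+(-1)^k z$, where $d=-A^2-A^{-2}$. Each $p_k$ is irreducible in the UFD $R$ (it is primitive and of degree one in $z$ with unit leading coefficient), and the $p_k$ are pairwise non-associate, so it suffices to show $p_k^{\binom{2n}{n-k}}\mid D_n^{(\mathit{Mb})_1}$ for each $k$. Localizing $R$ at the height-one prime $(p_k)$ gives a DVR with uniformizer $p_k$; by Smith normal form over this DVR, the multiplicity of $p_k$ in $\det G_n^{(\mathit{Mb})_1}$ equals $|(\mathit{Mb}_n)_1|-\mathrm{rank}(\bar G_k)$, where $\bar G_k$ is the reduction of the Gram matrix modulo $p_k$ (equivalently, the substitution $z=(-1)^{k+1}T_k(d)$) and the rank is computed over the residue field $\kappa=\mathrm{Frac}(\mathbb{Z}[A^{\pm1},w,x,y])$. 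Thus the whole theorem reduces to the rank estimate: for every $k$, the corank of $\bar G_k$ over $\kappa$ is at least $\binom{2n}{n-k}$.

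To exhibit this corank, first I would fix the block structure coming from $(\mathit{Mb}_n)_1=\mathit{Mb}_{n,0}\cup\mathit{Mb}_{n,1}$. On the $\mathit{Mb}_{n,0}$ diagrams the bilinear form only produces the curves $d$ and $z$, so the $\mathit{Mb}_{n,0}$-block is exactly the type $B$ Gram matrix $G_n^B$ (this is visible already in Example \ref{exampleMB1}, whose top-left $6\times6$ block reproduces Table \ref{typebmatrix}); the cross-blocks have every entry divisible by $x$ or $y$, and the $\mathit{Mb}_{n,1}$-block involves $w,x,y,d$. I would then produce $\binom{2n}{n-k}$ linearly independent kernel vectors $v$ supported on $\mathit{Mb}_{n,0}$, namely the Jones--Wenzl ``lollipop'' vectors used in the proof of Theorem \ref{CH-P}: one decorates the crosscap-avoiding core curve with the $k$-th Jones--Wenzl idempotent $f_k$. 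At the specialization $z=(-1)^{k+1}T_k(d)$ the annihilation of turnbacks by $f_k$ forces the type $B$ pairings $\langle v,m\rangle_{\mathit{Mb}}=0$ for all $m\in\mathit{Mb}_{n,0}$; this is condition (a), inherited verbatim from the Chen--Przytycki argument, which also supplies the count and the linear independence.

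The genuinely new content, and the step I expect to be the main obstacle, is condition (b): the same vectors must be orthogonal to the crosscap diagrams, i.e.\ $\langle v,m'\rangle_{\mathit{Mb}}=0$ at $z=(-1)^{k+1}T_k(d)$ for every $m'\in\mathit{Mb}_{n,1}$. I would compute this pairing directly as a skein in $Kb\ \hat\times\ I$: the $f_k$-colored core meets the single crosscap-crossing strand of $m'$, and by sliding that strand through $f_k$ (again using turnback annihilation) the pairing collapses, up to a monomial in $d,x,y$, to a scalar multiple of $\bigl(z-(-1)^{k+1}T_k(d)\bigr)$, which vanishes under the substitution. The crucial feature is that passing a strand through the crosscap carries a parity sign $(-1)^k$ (a Möbius-band/crosscap eigenvalue for $f_k$), and this sign singles out exactly one of the two type $B$ roots $z=\pm T_k(d)$: at the complementary root $z=(-1)^{k}T_k(d)$ the cross-pairing does \emph{not} vanish, which is precisely why only $T_k(d)+(-1)^k z$, and not the full type $B$ factor $T_k(d)^2-z^2$, is guaranteed to divide. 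Establishing this crosscap eigenvalue relation rigorously is the heart of the proof, and I would allow for the possibility that the kernel vectors must be corrected by $\mathit{Mb}_{n,1}$-components chosen so as not to disturb condition (a).

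Granting the rank estimate, the pieces assemble immediately: for each $k$ the corank of $\bar G_k$ is at least $\binom{2n}{n-k}$, so $p_k^{\binom{2n}{n-k}}\mid D_n^{(\mathit{Mb})_1}$, and coprimality of the $p_k$ yields $\prod_{k=1}^n\bigl(T_k(d)+(-1)^k z\bigr)^{\binom{2n}{n-k}}\mid D_n^{(\mathit{Mb})_1}$, as claimed.
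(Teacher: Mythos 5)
Your reduction framework is sound as far as it goes: the polynomials $T_k(d)+(-1)^kz$ are pairwise non-associate irreducibles in the UFD $\mathbb{Z}[A^{\pm1},w,x,y,z]$, and localizing at $(p_k)$ correctly converts the divisibility claim into a lower bound on the corank of the specialized Gram matrix (one slip: the $p_k$-valuation of the determinant is only bounded \emph{below} by the corank of $\bar G_k$, not equal to it, since invariant factors may carry higher powers; fortunately the inequality points the way you need). Your identification of the sign mechanism is also the right one — $z=(-1)^{k+1}T_k(d)$ is exactly the eigenvalue of a curve encircling a Jones--Wenzl-decorated lollipop, which is why only one of the two type $B$ roots can survive. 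The genuine gap is that the proposal stops precisely where the proof has to begin. Your condition (b) — that the kernel vectors supported on $\mathit{Mb}_{n,0}$ also annihilate every column indexed by $\mathit{Mb}_{n,1}$ after the substitution — is the \emph{entire} content of the theorem beyond the already-known type $B$ statement (Theorem \ref{CH-P}), and you do not prove it: you assert that "sliding the strand through $f_k$" collapses the pairing to a multiple of $z-(-1)^{k+1}T_k(d)$, and then immediately hedge that the vectors may need to be corrected by $\mathit{Mb}_{n,1}$-components. An unverified skein identity plus an acknowledged possible correction is not a proof; moreover, nothing in your argument rules out the structural failure mode of your own reduction, namely that the corank of $\bar G_k$ is genuinely smaller than $\binom{2n}{n-k}$ (with the valuation concentrated in fewer invariant factors), in which case kernel vectors in the required number do not exist at all and the approach collapses.

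For comparison, this paper never argues via idempotents for this statement (it quotes it from \cite{IM}); instead, its Lemma \ref{mainlemma} and Theorem \ref{Maintheorem} recover the divisibility — in fact a strictly stronger factorization — by elementary column operations. For each crosscap-crossing diagram $m$ with its two crosscap-avoiding resolutions $m_1,m_2$, the operation $m \mapsto (d+z)m-y(m_1+m_2)$ zeroes out all entries in the $\mathit{Mb}_{n,0}$-rows and renders the new column divisible by $w(d+z)-2xy$; performing this on all $\binom{2n}{n-1}$ such columns yields a block lower-triangular matrix with $G_n^B$ as a diagonal block, so Theorem \ref{CH-P} delivers the factors $(d-z)^{\binom{2n}{n-1}}\prod_{k=2}^{n}\bigl(T_k(d)^2-z^2\bigr)^{\binom{2n}{n-k}}$ together with $\bigl(w(d+z)-2xy\bigr)^{\binom{2n}{n-1}}$, which contains the product in Theorem \ref{MainTheorem}. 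That route never has to decide which root $z=\pm T_k(d)$ the crosscap selects, whereas your route cannot avoid that question: the crosscap eigenvalue computation you defer is exactly the lemma you would have to prove.
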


We will extend Theorem \ref{MainTheorem} by presenting a relationship between the determinant of Type $(Mb)_1$ and the determinant of a block matrix constructed from the Gram matrix of Type $B$ and a sub-collection of elements from Type $(Mb)_1$.

\begin{definition}
    Consider the set $Mb_{n,1}$ containing the collection of all diagrams of crossingless connections between $2n$ marked points on the boundary of $Mb \hat{\times} \{0\}$ with exactly one curve intersecting the crosscap. Let $G_{n}^{Mb_{n,1}}$ be the Gram matrix defined on the set $Mb_{n,1}$ using the bilinear form $\langle \ , \ \rangle_{Mb}$,

    $$ G_{n}^{Mb_{n, 1}} = (\langle m_i, m_j \rangle_{Mb})_{1 \leq i,j \leq \binom{2n}{n-1}},$$
    where $m_i, m_j \in Mb_{n,1}$. Denote by $\tilde{G}_{n}^{Mb_{n, 1}}$ the Gram matrix obtained from substituting $y=0$ and $w=1$ into $G_{n}^{Mb_{n, 1}}$,
    $$\tilde{G}_{n}^{Mb_{n, 1}} = G_{n}^{Mb_{n, 1}}(d, z, x, y=0, w=1).$$
    Furthermore, define $\tilde{G}_{n}^{(Mb)_1}$ to be the block matrix obtained from the direct sum of the Gram determinant of type $B$ and $\tilde{G}_{n}^{Mb_{n, 1}}$,
    $$\tilde{G}_n^{(Mb)_1} = G_n^{B} \oplus \tilde{G}_{n}^{Mb_{n, 1}}.$$
\end{definition}

\begin{example}
    The Gram matrix $\tilde{G}_{3}^{Mb_{3,1}}$, shown in Table \ref{Grammatrixn3tilde}, is constructed from the elements illustrated in Figure \ref{fig:BasisM31}.

  \begin{figure}[ht]
\[  \begin{minipage}{.5in} \includegraphics[width=\textwidth]{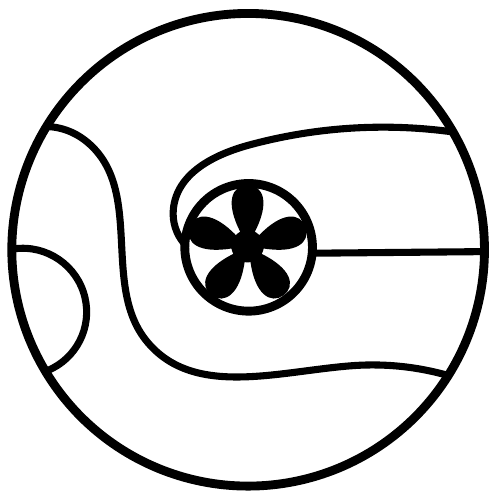} \vspace{-15pt} \[\pmb{(1 \ 2)}\] \end{minipage} 
               \qquad
        \begin{minipage}{.5in}\includegraphics[width=\textwidth]{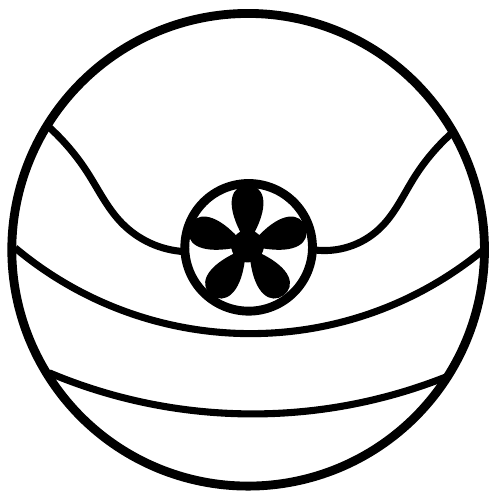} \vspace{-15pt} \[\pmb{(2 \ 3)}\] \end{minipage}
         \qquad
        \begin{minipage}{.5in}\includegraphics[width=\textwidth]{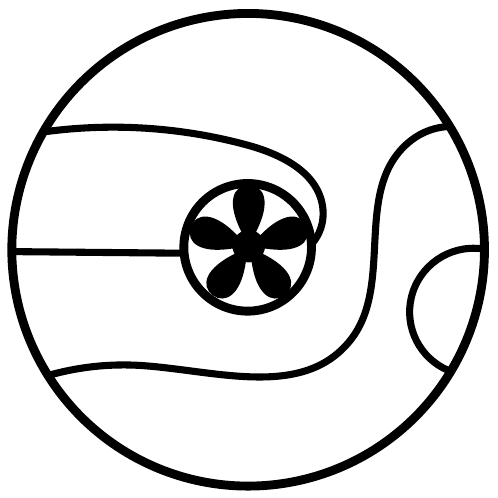} \vspace{-15pt} \[ \pmb{(3 \ 4)}\]\end{minipage}
        \qquad
        \begin{minipage}{.5in}\includegraphics[width=\textwidth]{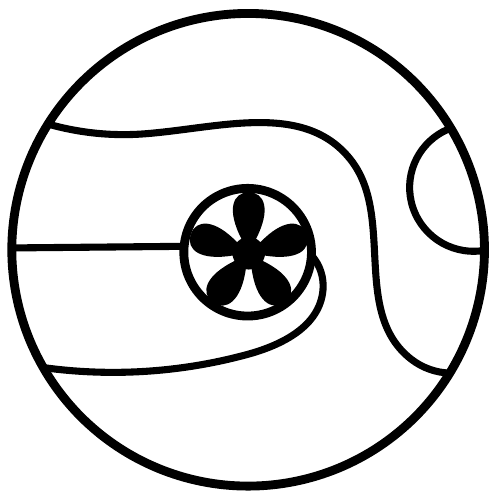} \vspace{-15pt} \[ \pmb{(4 \ 5)}\]\end{minipage}
        \qquad
        \begin{minipage}{.5in}\includegraphics[width=\textwidth]{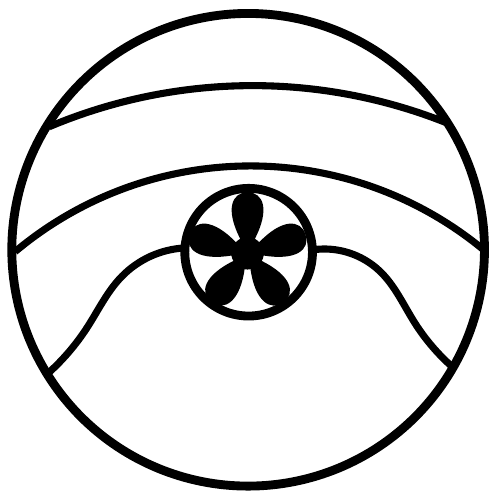} \vspace{-15pt} \[ \pmb{(5 \ 6)}\]\end{minipage}   
        \qquad
        \begin{minipage}{.5in}\includegraphics[width=\textwidth]{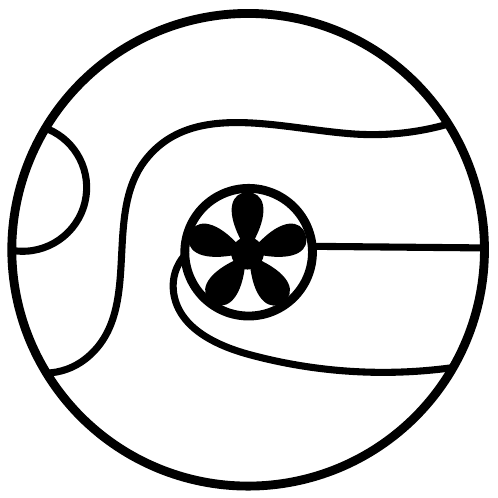} \vspace{-15pt} \[ \pmb{(1 \ 6)}\]\end{minipage} 
 \hfill \]\\
\begin{minipage}{.5in} \includegraphics[width=\textwidth]{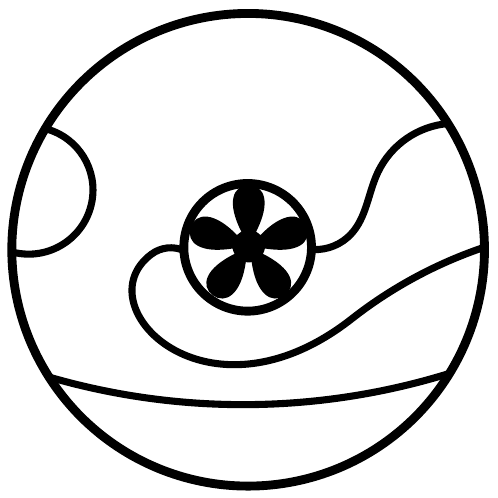} \vspace{-15pt} \[\pmb{(1 \ 3)}\] \end{minipage} 
               \qquad
        \begin{minipage}{.5in}\includegraphics[width=\textwidth]{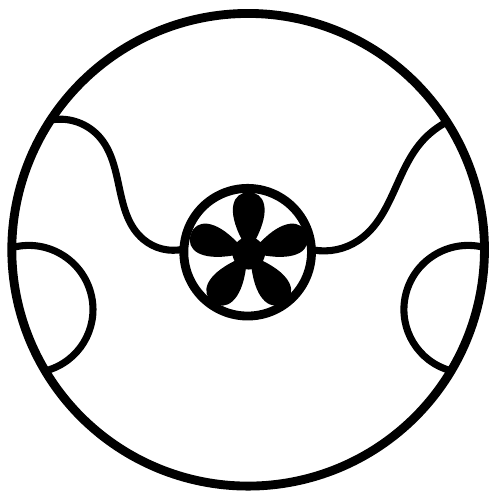} \vspace{-15pt} \[\pmb{(2 \ 4)}\] \end{minipage}
         \qquad
        \begin{minipage}{.5in}\includegraphics[width=\textwidth]{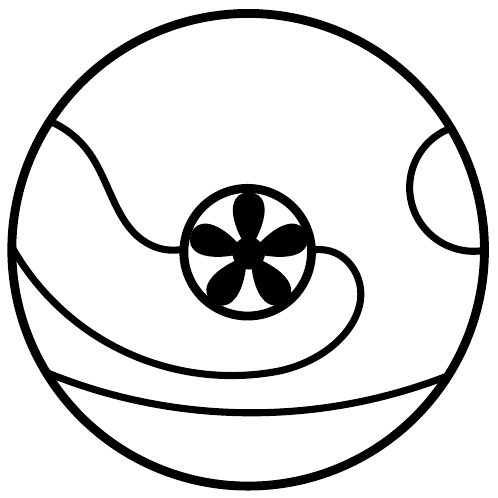} \vspace{-15pt} \[ \pmb{(3 \ 5)}\]\end{minipage}
        \qquad
        \begin{minipage}{.5in}\includegraphics[width=\textwidth]{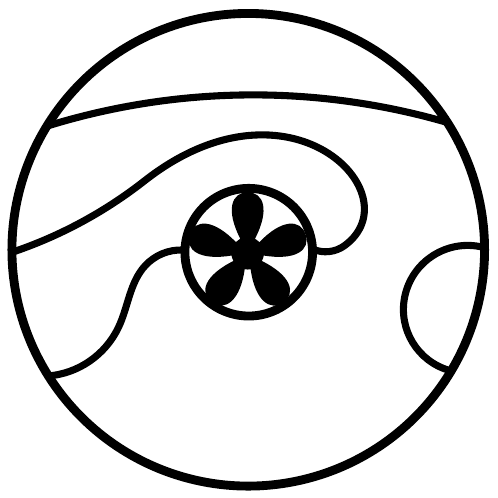} \vspace{-15pt} \[ \pmb{(4 \ 6)}\]\end{minipage}
        \qquad
        \begin{minipage}{.5in}\includegraphics[width=\textwidth]{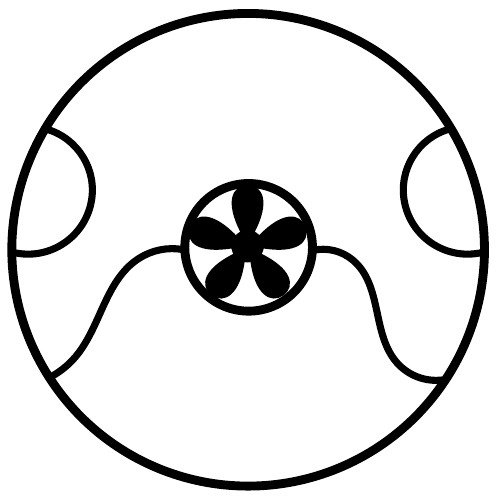} \vspace{-15pt} \[ \pmb{(1 \ 5)}\]\end{minipage}   
        \qquad
        \begin{minipage}{.5in}\includegraphics[width=\textwidth]{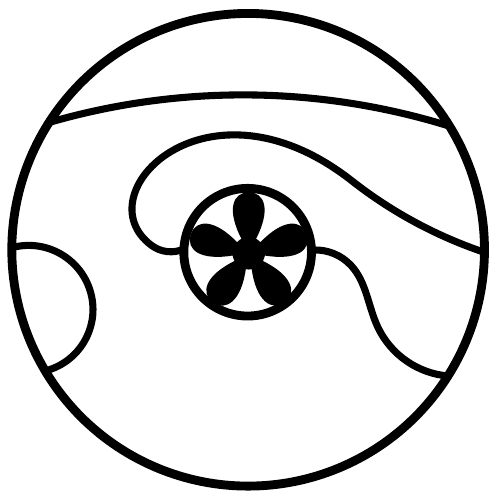} \vspace{-15pt} \[ \pmb{(2 \ 6)}\]\end{minipage} 
       \qquad
        \begin{minipage}{.5in}\includegraphics[width=\textwidth]{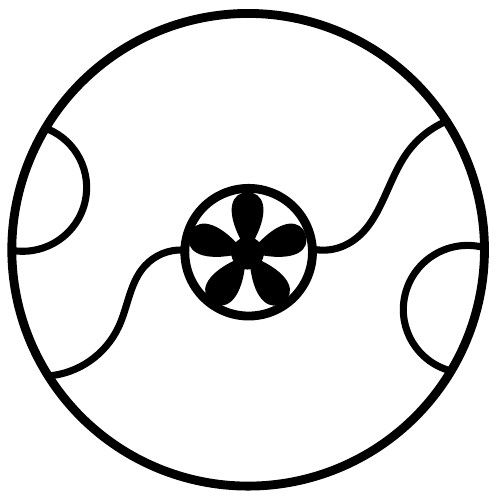} \vspace{-15pt} \[ \pmb{(1 \ 4)}\]\end{minipage}   
        \qquad
        \begin{minipage}{.5in}\includegraphics[width=\textwidth]{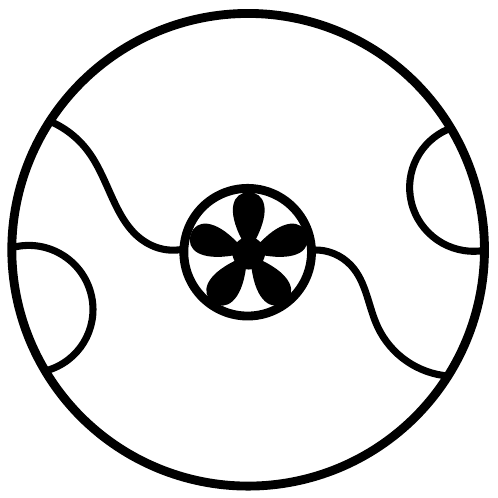} \vspace{-15pt} \[ \pmb{(2 \ 5)}\]\end{minipage}   
        \qquad
        \begin{minipage}{.5in}\includegraphics[width=\textwidth]{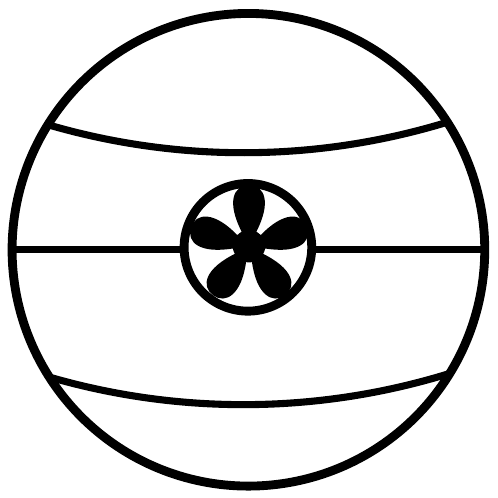} \vspace{-15pt} \[ \pmb{(3 \ 6)}\]\end{minipage}   
    \caption{The set $Mb_{3,1}$.}
    \label{fig:BasisM31}
\end{figure}

    and its determinant is

    \begin{eqnarray*}
        \det (\tilde{G}_{3}^{Mb_{3,1}}) &=& d^{12}(d-1)^2(d+1)^2(d+2)^7(d-2)^7 \\
        &=& (T_{4}(d)-2)^{\binom{6}{1}}(T_{6}(d)-2)^{\binom{6}{0}}.
    \end{eqnarray*}

    {\centering
\resizebox{\columnwidth}{!}{
\renewcommand*{\arraystretch}{1.2}\begin{tabular}{ c|||c|c|c|c|c|c||c|c|c|c|c|c||c|c|c}
 $\langle \ , \ \rangle$ & $(12)$ 
& $(23)$
& $(34)$
& $(45)$
& $(56)$
& $(16)$
& $(13)$
& $(24)$
& $(35)$
& $(46)$ 
& $(15)$
& $(26)$
& $(14)$
& $(25)$
& $(36)$\\
\hline \hline \hline 	
$(12)$ & $d^2$ & $1$ & $  1 $ & $0 $ & $1 $ & $1$ & $ d$ & $ d$ & $ 0$ & $ 0$ & $ 0$ & $ d$ & $ 1$ & $ 0$ & $ 1$ \\ \hline
 $(23)$ & $1$ & $ d^2$ & $ 1$ & $ 1$ & $ 0$ & $ 1$ & $ d$ & $ d$ & $ d$ & $ 0$ & $ 0$ & $ 0$ & $ 1$ & $ 1$ & $ 0$\\ \hline
 $(34)$ &   $1$ & $ 1$ & $ d^2$ & $ 1$ & $ 1 $ & $0 $ & $0 $ & $d $ & $d$ & $ d$ & $ 0 $ & $0 $ & $0$ & $ 1$ & $ 1$ \\ \hline
 $(45)$ &    $0 $ & $1$ & $ 1$ & $ d^2$ & $ 1$ & $ 1$ & $ 0$ & $ 0$ & $ d$ & $ d$ & $ d $ & $0 $ & $1 $ & $0 $ & $1$\\ \hline
 $(56)$ &   $1 $ & $0 $ & $1$ & $ 1$ & $ d^2$ & $ 1$ & $ 0$ & $ 0$ & $ 0$ & $ d$ & $ d$ & $ d$ & $ 1$ & $ 1$ & $ 0$ \\ \hline
 $(16)$ &   $1 $ & $1$ & $ 0$ & $ 1$ & $ 1$ & $ d^2$ & $ d$ & $ 0$ & $ 0$ & $ 0$ & $ d$ & $ d$ & $ 0$ & $ 1$ & $ 1$ \\ \hline \hline
 $(13)$   & $d $ & $d$ & $ 0$ & $ 0$ & $ 0$ & $ d$ & $ d^2$ & $ 1$ & $ 0$ & $ 1$ & $ 0$ & $ 1$ & $ d$ & $ 0$ & $ d$ \\ \hline
 $(24)$ &   $d$ & $ d$ & $ d$ & $ 0$ & $ 0$ & $ 0$ & $ 1$ & $ d^2$ & $ 1$ & $ 0$ & $ 1$ & $ 0$ & $ d$ & $ d$ & $ 0$ \\ \hline
  $(35)$ &   $  0$ & $ d$ & $ d$ & $ d$ & $ 0$ & $ 0$ & $ 0$ & $ 1$ & $ d^2$ & $ 1$ & $ 0$ & $ 1$ & $ 0$ & $ d$ & $ d$ \\ \hline
  $(46)$  &   $ 0$ & $ 0$ & $ d$ & $ d$ & $ d$ & $ 0$ & $ 1$ & $ 0$ & $ 1$ & $ d^2$ & $ 1$ & $ 0$ & $ d$ & $ 0$ & $ d$ \\ \hline
  $(15)$ &   $  0$ & $ 0$ & $ 0$ & $ d$ & $ d$ & $ d$ & $ 0$ & $ 1$ & $ 0$ & $ 1$ & $ d^2$ & $ 1$ & $ d$ & $ d$ & $ 0$\\ \hline
 $(26)$  &   $  d$ & $ 0$ & $ 0$ & $ 0$ & $ d$ & $ d$ & $ 1$ & $ 0$ & $ 1$ & $ 0$ & $ 1$ & $ d^2$ & $ 0$ & $ d$ & $ d$ \\ \hline \hline
 $(14)$  &   $  1$ & $ 1$ & $ 0$ & $ 1$ & $ 1$ & $ 0$ & $ d$ & $ d$ & $ 0$ & $ d$ & $ d$ & $ 0$ & $ d^2$ & $ 1$ & $ 1$ \\ \hline
 $(25)$ &   $   0 $ & $1 $ & $1 $ & $0 $ & $1 $ & $1 $ & $0 $ & $d$ & $ d$ & $ 0$ & $ d$ & $ d$ & $ 1$ & $ d^{2}$ & $ 1$ \\ \hline 
 $(36)$ &   $ 1 $ & $0 $ & $1 $ & $1 $ & $0 $ & $1 $ & $d $ & $0$ & $ d$ & $ d$ & $ 0$ & $ d$ & $ 1$ & $ 1$ & $d^2$
\end{tabular}}}
\captionof{table}{The Gram matrix $\tilde{G}_{3}^{Mb_{3,1}}.$}\label{Grammatrixn3tilde}

\end{example}

\begin{lemma}\label{mainlemma} Let $n \geq 2$, then 
    $$(d+z)^{\binom{2n}{n-1}}\det(G_n^{(Mb)_1}) = (w(d+z)-2xy)^{\binom{2n}{n-1}} \det(\tilde{G}_n^{(Mb)_1}). $$ 
\end{lemma}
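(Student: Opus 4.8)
The plan is to exploit the block structure of $G_n^{(Mb)_1}$ coming from the partition $(Mb_n)_1 = Mb_{n,0}\cup Mb_{n,1}$ and to reduce everything to a single identity relating the mixed pairings to the Type $B$ form. Ordering the basis so that the $\binom{2n}{n}$ diagrams of $Mb_{n,0}$ come first, the definition of the bilinear form gives
\[
G_n^{(Mb)_1}=\begin{pmatrix} G_n^{B} & yP\\ xP^{T} & W\end{pmatrix},
\]
where $P$ is a matrix over $\mathbb{Z}[d,z]$. Indeed, pairing a diagram of $Mb_{n,0}$ with one of $Mb_{n,1}$ produces exactly one crosscap curve, contributing a single factor $y$ (resp.\ $x$) according to which argument carries the crosscap arc, while the remaining trivial and $z$-curves give the same $\mathbb{Z}[d,z]$-monomial in either order; this is why the two off-diagonal blocks are governed by the same $P$ up to transpose. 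The top-left block coincides with the Type $B$ Gram matrix $G_n^{B}$, since diagrams in $Mb_{n,0}$ avoid the crosscap and their pairing therefore takes place in the annular part of the band.

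First I would analyze the block $W$ by counting curves. As each element of $Mb_{n,1}$ carries exactly one crosscap arc, gluing two of them yields either a single curve through both crosscaps (a $w$-curve) or an $x$-curve together with a $y$-curve, all other components contributing $d$ and $z$. Hence $W=w\tilde W+xy\,G$ for a matrix $G$ over $\mathbb{Z}[d,z]$, where $\tilde W=W(y=0,w=1)$ is precisely the block in $\tilde G_n^{(Mb)_1}$. Because $\det G_n^{B}=\prod_{i=1}^n(T_i(d)^2-z^2)^{\binom{2n}{n-i}}\neq 0$ by Theorem \ref{CH-P}, the matrix $G_n^{B}$ is invertible over $\mathbb{Q}(d,z)$ and the Schur complement formula gives
\[
\det\!\left(G_n^{(Mb)_1}\right)=\det\!\left(G_n^{B}\right)\det(S),\qquad S=W-xy\,P^{T}\!\left(G_n^{B}\right)^{-1}\!P .
\]
Since $\tilde G_n^{(Mb)_1}=G_n^{B}\oplus\tilde W$ is block diagonal, $\det(\tilde G_n^{(Mb)_1})=\det(G_n^{B})\det(\tilde W)$, so after cancelling the common factor $\det(G_n^{B})$ the Lemma becomes the equivalent statement $(d+z)^{N_1}\det(S)=(w(d+z)-2xy)^{N_1}\det(\tilde W)$, with $N_1=\binom{2n}{n-1}$.

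The key step, and the one I expect to be the main obstacle, is the matrix identity
\[
(d+z)\,P^{T}\!\left(G_n^{B}\right)^{-1}\!P=(d+z)\,G+2\tilde W ,
\]
which, combined with $W=w\tilde W+xyG$, collapses the Schur complement to the scalar multiple $S=\tfrac{w(d+z)-2xy}{d+z}\,\tilde W$; taking determinants of this $N_1\times N_1$ identity yields the displayed equivalent statement, and clearing the denominator $(d+z)^{N_1}$ turns the rational identity into the asserted polynomial one (the $n=2$ factorization of Example \ref{exampleMB1} is consistent with $\det(\tilde W)=T_4(d)-2$ and confirms this scaling). To prove the key identity I would solve $G_n^{B}K=P$ geometrically, writing each column of $P$—that is, each $Mb_{n,1}$ diagram paired against all of $Mb_{n,0}$—as an explicit $\mathbb{Q}(d,z)$-combination of Type $B$ diagrams obtained by resolving the single crosscap arc; decorating the crosscap with the Jones–Wenzl idempotent by the lollipop method, exactly as in the proofs of Theorem \ref{CH-P} and Theorem \ref{MainTheorem}, is what produces both the factor $(d+z)$ and the correction $2\tilde W$, reducing the identity to the polynomial equality $K^{T}G_n^{B}K=G+\tfrac{2}{d+z}\tilde W$, which is then verified combinatorially. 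Controlling $P^{T}(G_n^{B})^{-1}P$ is delicate precisely because it entangles the inverse of the Type $B$ form with the crosscap pairings, and this crosscap-resolution step is the heart of the argument.
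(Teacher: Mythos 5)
Your reduction framework is correct, and so is your key identity: with $G_n^{(Mb)_1}=\begin{pmatrix} G_n^B & yP\\ xP^T & W\end{pmatrix}$ and $W=w\tilde W+xy\,G$, the Schur-complement step and the final rescaling would indeed deliver the lemma (your consistency check against Example \ref{exampleMB1} is also right). The genuine gap is that the identity $(d+z)\,P^{T}(G_n^{B})^{-1}P=(d+z)\,G+2\tilde W$ is never proved: you yourself call it ``the main obstacle'' and ``the heart of the argument,'' and what you offer in its place is a program (resolve the crosscap arc, decorate with Jones--Wenzl idempotents via the lollipop method, then ``verified combinatorially''), not an argument. Since all of the geometric content of the lemma is concentrated in exactly that identity, the proposal as written only reformulates the statement; it does not prove it.

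The missing step is exactly what the paper's proof supplies, and it is more elementary than your plan suggests. For $m\in Mb_{n,1}$ with crosscap arc $\alpha$, let $m_1,m_2\in Mb_{n,0}$ be the two resolutions of $\alpha$, and let $T$ be the $0$--$1$ incidence matrix with $T_{c,m}=1$ exactly when $c\in\{m_1,m_2\}$. The paper applies the column operation $m\mapsto(d+z)m-y(m_1+m_2)$ and verifies, by inspecting how curves close up, that any fixed basis element pairs with the triple $(m_1,m_2,m)$ as $u'(d,z,y)$ or $u'(z,d,y)$ (rows in $Mb_{n,0}$), and as $u'(x,x,w)$, $u'(dx,zx,xy)$ or $u'(zx,dx,xy)$ (rows in $Mb_{n,1}$); this makes the transformed matrix block lower triangular with the factor $w(d+z)-2xy$ in every new column. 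Those entrywise facts are equivalent to the two polynomial identities $G_n^B\,T=(d+z)P$ and $P^{T}T=(d+z)G+2\tilde W$: the first says $(G_n^{B})^{-1}P=\tfrac{1}{d+z}T$, so no idempotent or lollipop construction is needed to control the inverse of the type $B$ matrix, and substituting it into the second yields precisely your key identity. In short, your Schur-complement formulation is a correct linear-algebra repackaging of the paper's column operations, but the triple-by-triple curve count you postponed is not a technical afterthought---it is the whole proof, and until you supply it your argument is incomplete.
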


\begin{proof}
We will adapt the proof of Proposition \ref{GRAMMB:Prop2} given in \cite{BIMP} to show that a specific column operation on $\binom{2n}{n-1}$ columns will convert $G_n^{(Mb)_1}$ into a lower triangular matrix at the expense of picking up $\binom{2n}{n-1}$ number of factors of $(d+z)$ in the determinant. \\

\begin{figure}[h]
\centering
$$
\vcenter{\hbox{
\begin{overpic}[scale = .3]{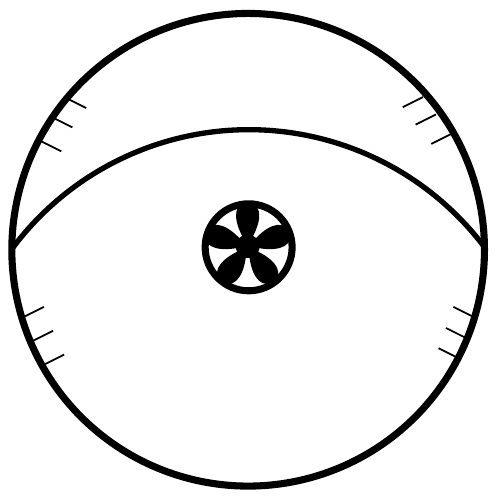}
\put(32,-7){$m_1$}
\put(32,57){$\alpha_1$}
\put(-4, 33){$i$}
\put(72, 33){$j$}
\end{overpic} }}  \ \ \ \ \ 
\vcenter{\hbox{
\begin{overpic}[scale = .3]{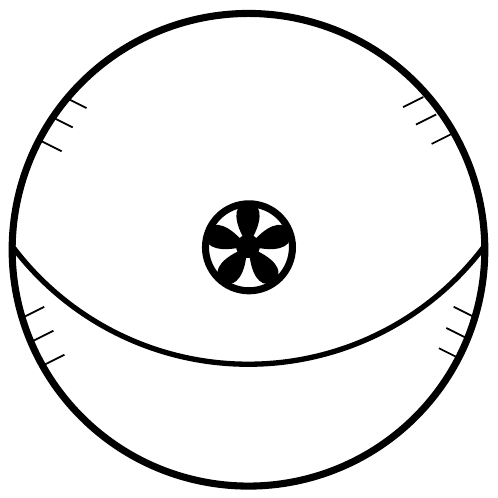}
\put(32,-7){$m_2$}
\put(32,12){$\alpha_2$}
\put(-4, 33){$i$}
\put(72, 33){$j$}
\end{overpic} }}  \ \ \ \ \
\vcenter{\hbox{
\begin{overpic}[scale = .3]{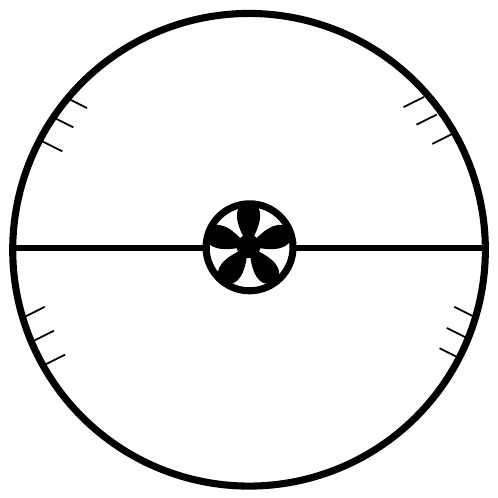}
\put(32,-7){$m$}
\put(50,38){$\alpha$}
\put(-4, 33){$i$}
\put(72, 33){$j$}
\end{overpic} }}
$$
\caption{The crossingless connection $m \in Mb_{n,1}$ with the curve $\alpha$ connected to the $i^{th}$ and $j^{th}$ marked point and passing through the crosscap. The crossingless connections $m_1,m_2 \in Mb_{n, 0}$ are obtained from $m$ by modifying $\alpha$ so that it does not pass through the crosscap.} \label{fig:triple}
\end{figure}
Let $n\geq 2$. For a crossingless connection $m$ with a curve $\alpha$ that intersects the crosscap, there are only two distinct elements that only differ by $\alpha$. They are obtained from $m$ by altering $\alpha$ in such a way that it does not intersect the crosscap; as illustrated in Figure \ref{fig:triple}. By focusing on the matrix obtained from these elements (see Equation \ref{eqn:smallMatrixblock}), we find that the entries all have a common monomial factor, say $u$, from the fact that the elements only differ by one curve. 
   
 \begin{equation}\label{eqn:smallMatrixblock}
                \begin{array}{c||c|c|c}
 \langle \ , \ \rangle_{Mb} & m_1 & m_2 & m  \\ \hline \hline
                   m_1 & ud  &  uz &  uy \\
                   m_2 &    uz  &  ud &  uy  \\
                    m &    ux  &  ux & u w      
\end{array}  \end{equation}

The row operation $m \to (d+z)m-y(m_1+m_2)$ will alter the determinant by one factor of $(d+z)$ and yield a column that is divisible by $(w(d+z)-2xy)$.

 \begin{equation}\label{eqn:smallMatrixblockrowoperation}
                \begin{array}{c||c|c|c}
 \langle \ , \ \rangle_{Mb} & m_1 & m_2 & (d+z)m-y(m_1+m_2)  \\ \hline \hline
                   m_1 & ud  &  uz &  0 \\
                   m_2 &    uz  &  ud &  0  \\
                    m &    ux  &  ux & u (w(d+z)-2xy)
\end{array}  \end{equation}

Let $c \in Mb_{n, 0}$, then the sub-row $(\langle c , m_1 \rangle_{Mb}, \langle c, m_2 \rangle_{Mb}, \langle c , m \rangle_{Mb})$ is equal to $u'(d, z, y)$ or $u'(z, d, y)$ where $u'$ is a monomial. This is because $m_1, m_2,$ and $m$ only differ by the arc $\alpha$, $c$ has no arcs intersecting the crosscap while $m_1$ and $m_2$ also have no arcs intersecting the crosscap (yielding either $d, z$ or $z, d$), and $m$ has only one arc intersecting the crosscap (yielding the variable $y$). Therefore, after applying the specified column operation on $m$ we obtain $\langle c , (d+z)m-y(m_1+m_2) \rangle_{Mb}= 0$ for all $c \in Mb_{n, 0}$. \\

Let $c' \in Mb_{n, 1}$, then the sub-row $(\langle c' , m_1 \rangle_{Mb}, \langle c', m_2 \rangle_{Mb}, \langle c' , m \rangle_{Mb})$ is equal to $u' (x, x, w)$, $u' (xd, xz, xy)$ or $u' (xz, xz, xy)$ for some monomial $u'$. Since $c$ has an arc intersecting the crosscap but $m_1$ and $m_2$ do not, then the bilinear form must pick up a curve intersecting the crosscap once; yielding the variable $x$. Since $m_1$ and $m_2$ only differ by one curve then there are two cases to consider. 

\begin{enumerate}
    \item If the $x$ variable was obtained from the arc $\alpha_1$ (or $\alpha_2$, respectively) then $\langle c' , m_1 \rangle_{Mb}= \langle c', m_2 \rangle_{Mb} = u' x$ because there is only one homotopically distinct simple closed curve that intersects the inner crosscap. Furthermore, we also have $\langle c' , m_1 \rangle_{Mb}=u'w$ where $x$ is changed to $w$ from changing $\alpha_1$ (or $\alpha_2)$ to $\alpha$. 
    \item If the $x$ variable was not obtained from the arc $\alpha_1$ (or $\alpha_2$, respectively) then the pair $(\langle c' , m_1 \rangle_{Mb}, \langle c', m_2 \rangle_{Mb})$ is equal to $u'(dx,zx)$ or $u'(zx,dx)$. Furthermore, $\langle c' , m \rangle_{Mb}=u'xy$ because $d$ (or $z$, respectively) was changed to $y$ by changing $\alpha_1$ (or $\alpha_2)$ to $\alpha$ and so this change produces a simple closed curve that intersects the outer crosscap.
\end{enumerate}
Therefore, after applying the specified column operation on $m$ if $\langle c' , m \rangle_{Mb}=u'xy$ then $\langle c' , (d+z)m-y(m_1+m_2) \rangle_{Mb}= 0$ and if $\langle c' , m \rangle_{Mb}=u_{c'}w$ then $\langle c' , (d+z)m-y(m_1+m_2) \rangle_{Mb}= u_{c'} (w(d+z)-2xy)$ for all $c' \in Mb_{n, 1}$ and some monomial $u_{c'}$ depending on $c'$.

In summary, after the column operation the determinant picks up a factor of $(d+z)$, the new column $C$ is divisible by $(w(d+z)-2xy)$, and the entries of the column obtained from elements of $Mb_{n, 0}$ are all zero. Furthermore, the column obtained from factoring $(w(d+z)-2xy)$ out of $C$ is equal to the original column after letting $w=1$ and $y=0$.

In this process we focused on a triple obtained from a crossingless connection that intersects the crosscap once. We can obtain similar triples for the $\binom{2n}{n-1}$ distinct crossingless connections in $Mb_{n, 1}$. Let $M$ be the matrix obtained by performing a column operation using the triples on the $\binom{2n}{n-1}$ columns formed from crossingless connections that intersect the crosscap once. Then $M$ is a block lower triangular matrix of the form

$$ M = \begin{pmatrix}
    G_n^B & 0 \\
    E & D
\end{pmatrix},$$
where all $\binom{2n}{n-1}$ columns of $D$ are divisible by $(w(d+z)-2xy)$ and the matrix obtained from $D$ by factoring out $(w(d+z)-2xy)$ from every column is equal to $\tilde{G}_n^{Mb_{n,1}}$. Therefore,
\begin{eqnarray*}
\det (M) &=& 
(w(d+z)-2xy)^{\binom{2n}{n-1}}\det(G_n^B)\det(\tilde{G}_n^{Mb_{n,1}})\\
&=& (w(d+z)-2xy)^{\binom{2n}{n-1}}\det(\tilde{G}_n^{(Mb)_1}).
\end{eqnarray*}
We also have $\det(M) = (d+z)^{\binom{2n}{n-1}} \det(G_n^{(Mb)_1})$ which implies that
$$(d+z)^{\binom{2n}{n-1}}\det(G_n^{(Mb)_1}) = (w(d+z)-2xy)^{\binom{2n}{n-1}} \det(\tilde{G}_n^{(Mb)_1}). $$ 

\end{proof}

\begin{theorem}\label{Maintheorem} For $n \geq 2$,

\begin{eqnarray*}
    \det(G_n^{(Mb)_1}) &=& [((w(d+z)-2xy))(d-z)]^{\binom{2n}{n-1}}\prod_{k=2}^n (T_k(d)^2-z^2)^{\binom{2n}{n-k}} \det(\tilde{G}_{n}^{Mb_{n, 1}}).
\end{eqnarray*}
\end{theorem}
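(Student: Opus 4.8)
The plan is to treat Theorem \ref{Maintheorem} as an algebraic consequence of Lemma \ref{mainlemma}, the block-diagonal structure of $\tilde{G}_n^{(Mb)_1}$, and the closed formula for the type $B$ determinant recorded in Theorem \ref{CH-P}. The genuinely hard diagrammatic and combinatorial work---the column operation, the triple analysis of curves through the crosscap, and the identification of the block lower-triangular form---has already been carried out in the proof of the lemma, so what remains is careful bookkeeping of factors inside the integral domain $\mathbb{Z}[A^{\pm1},w,x,y,z]$.

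First I would invoke Lemma \ref{mainlemma} to start from
$$(d+z)^{\binom{2n}{n-1}}\det(G_n^{(Mb)_1}) = (w(d+z)-2xy)^{\binom{2n}{n-1}} \det(\tilde{G}_n^{(Mb)_1}).$$
Next, since $\tilde{G}_n^{(Mb)_1} = G_n^{B} \oplus \tilde{G}_{n}^{Mb_{n, 1}}$ is block diagonal, its determinant factors as $\det(\tilde{G}_n^{(Mb)_1}) = \det(G_n^{B})\,\det(\tilde{G}_{n}^{Mb_{n, 1}})$. I would then substitute the type $B$ formula $\det(G_n^B)=\prod_{i=1}^{n}(T_i(d)^2-z^2)^{\binom{2n}{n-i}}$ from Theorem \ref{CH-P} into the right-hand side.

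The decisive manipulation is to isolate the $i=1$ factor of that product. Since $T_1(d)=d$, we have $(T_1(d)^2-z^2)^{\binom{2n}{n-1}} = (d-z)^{\binom{2n}{n-1}}(d+z)^{\binom{2n}{n-1}}$, so
$$\prod_{i=1}^{n}(T_i(d)^2-z^2)^{\binom{2n}{n-i}} = (d-z)^{\binom{2n}{n-1}}(d+z)^{\binom{2n}{n-1}}\prod_{k=2}^{n}(T_k(d)^2-z^2)^{\binom{2n}{n-k}}.$$
Feeding this back in produces a factor of $(d+z)^{\binom{2n}{n-1}}$ on the right that matches exactly the one multiplying $\det(G_n^{(Mb)_1})$ on the left. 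Cancelling it and regrouping $(w(d+z)-2xy)^{\binom{2n}{n-1}}(d-z)^{\binom{2n}{n-1}}$ into $[(w(d+z)-2xy)(d-z)]^{\binom{2n}{n-1}}$ yields precisely the claimed identity.

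The only point requiring care---and the closest thing to an obstacle---is the cancellation of $(d+z)^{\binom{2n}{n-1}}$ from both sides. This is legitimate because all matrix entries, and hence all determinants involved, lie in the polynomial ring $\mathbb{Z}[A^{\pm1},w,x,y,z]$ (equivalently $\mathbb{Z}[d,w,x,y,z]$), which is an integral domain in which $d+z$ is a nonzero non-unit and therefore not a zero divisor; equality of the two products thus forces equality after removing the common factor. I would state this cancellation step explicitly, since it is what licenses passing from the purely multiplicative relation of Lemma \ref{mainlemma} to the closed-form expression for $\det(G_n^{(Mb)_1})$ asserted in the theorem.
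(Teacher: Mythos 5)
Your proposal is correct and follows essentially the same route as the paper's own proof: both combine Lemma \ref{mainlemma}, the factorization $\det(\tilde{G}_n^{(Mb)_1}) = \det(G_n^{B})\det(\tilde{G}_{n}^{Mb_{n,1}})$, and Theorem \ref{CH-P} with the $k=1$ factor $(T_1(d)^2-z^2)^{\binom{2n}{n-1}} = (d-z)^{\binom{2n}{n-1}}(d+z)^{\binom{2n}{n-1}}$ split off, then cancel $(d+z)^{\binom{2n}{n-1}}$ from both sides. Your explicit justification of that cancellation via the integral-domain property of $\mathbb{Z}[d,w,x,y,z]$ is a detail the paper leaves implicit, but it does not change the argument.
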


\begin{proof}
Recall that, $\tilde{G}_n^{(Mb)_1} = G_n^{B} \oplus \tilde{G}_{n}^{Mb_{n, 1}}.$
    By construction and Theorem \ref{CH-P},     
    \begin{eqnarray}
    \det(\tilde{G}_n^{(Mb)_1})
&=& \det(G_n^{B}) \det( \tilde{G}_{n}^{Mb_{n, 1}}) \label{eqn:gtilde}\\    
    &=& (d+z)^{\binom{2n}{n-1}}(d-z)^{\binom{2n}{n-1}}\prod_{k=2}^n (T_k(d)^2-z^2)^{\binom{2n}{n-k}}  \nonumber \\
    && \times \det(\tilde{G}_{n}^{Mb_{n, 1}}). \nonumber
    \end{eqnarray} 
    After multiplying Equation \ref{eqn:gtilde} by $(w(d+z)-2xy)^{\binom{2n}{n-1}}$ we have
    
    \begin{eqnarray*}
  (w(d+z)-2xy)^{\binom{2n}{n-1}} \det(\tilde{G}_n^{(Mb)_1})
&=& (w(d+z)-2xy)^{\binom{2n}{n-1}} \\
&&\times (d+z)^{\binom{2n}{n-1}}(d-z)^{\binom{2n}{n-1}} \\
&& \times \prod_{k=2}^n (T_k(d)^2-z^2)^{\binom{2n}{n-k}}  \\
&&\times \det(\tilde{G}_{n}^{Mb_{n, 1}}).
    \end{eqnarray*} 
    Now, by applying Lemma \ref{mainlemma} to the left hand side of the equation we have

     \begin{eqnarray*}
 (d+z)^{\binom{2n}{n-1}}\det(G_n^{(Mb)_1})
&=& (w(d+z)-2xy)^{\binom{2n}{n-1}} \\
&& \times (d+z)^{\binom{2n}{n-1}} (d-z)^{\binom{2n}{n-1}} \\
&& \times \prod_{k=2}^n (T_k(d)^2-z^2)^{\binom{2n}{n-k}} \det(\tilde{G}_{n}^{Mb_{n, 1}}).
    \end{eqnarray*} 
    Therefore, 
     \begin{eqnarray*}
  \det(G_n^{(Mb)_1})
&=& (w(d+z)-2xy)^{\binom{2n}{n-1}} (d-z)^{\binom{2n}{n-1}} \\
&& \times \prod_{k=2}^n (T_k(d)^2-z^2)^{\binom{2n}{n-k}} \det(\tilde{G}_{n}^{Mb_{n, 1}}). \\
&=& [((w(d+z)-2xy))(d-z)]^{\binom{2n}{n-1}} \\
&& \times \prod_{k=2}^n (T_k(d)^2-z^2)^{\binom{2n}{n-k}} \det(\tilde{G}_{n}^{Mb_{n, 1}}).
    \end{eqnarray*} 
\end{proof}

The following conjecture, originally stated in \cite{IM}, provides a conjectural closed formula for the Gram determinant of type $(Mb)_1$.
\begin{conjecture}\cite{IM}\label{Conjecture}\

Let $R = \mathbb{Z}[A^{\pm 1},w,x,y,z].$ Then,
the Gram determinant of type $(Mb)_1$ for $n \geq 1$, is:
\begin{eqnarray*}
D^{(Mb)_1}_n &=&  \left[(d-z)((d + z)w -2xy) \right]^{\binom{2n}{n-1}} \prod_{k=2}^n (T_k(d)^2-z^2)^{\binom{2n}{n-k} } \prod\limits_{k=2}^n (T_{2k}(d)-2)^{\binom{2n}{n-k}},
\end{eqnarray*}

where $T_k(d)$ is the $k^{th}$ Chebyshev polynomial of the first kind and $d = -A^2-A^{-2}$.
\end{conjecture}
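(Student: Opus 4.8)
The plan is to use the already-established Theorem \ref{Maintheorem} to reduce Conjecture \ref{Conjecture} to a single determinant identity in the one variable $d$, and then to evaluate that determinant by importing the Jones--Wenzl ``lollipop'' computation behind the type $B$ formula of Theorem \ref{CH-P}, specialized at $z=2$. First I would lay the formula of Theorem \ref{Maintheorem} beside the conjectural formula. Both carry the factor $\bigl[(d-z)(w(d+z)-2xy)\bigr]^{\binom{2n}{n-1}}$ and the product $\prod_{k=2}^{n}(T_k(d)^2-z^2)^{\binom{2n}{n-k}}$, so after cancellation Conjecture \ref{Conjecture} is \emph{equivalent} to
\begin{equation}\label{eqn:reduction}
\det\bigl(\tilde{G}_n^{Mb_{n,1}}\bigr)=\prod_{k=2}^{n}\bigl(T_{2k}(d)-2\bigr)^{\binom{2n}{n-k}}.
\end{equation}
This is both the crux and a drastic simplification, since the right-hand side involves $d$ alone: the variables $w,x,y,z$ have already been absorbed by Theorem \ref{Maintheorem}.

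The guiding observation is the Chebyshev identity $T_{2k}(d)=T_k(d)^2-2$, equivalently $T_{2k}(d)-2=T_k(d)^2-2^2=\bigl(T_k(d)-2\bigr)\bigl(T_k(d)+2\bigr)$. Thus each factor on the right of \eqref{eqn:reduction} is exactly the type $B$ eigenvalue $T_k(d)^2-z^2$ of Theorem \ref{CH-P} evaluated at $z=2$, and \eqref{eqn:reduction} asserts that $\det(\tilde{G}_n^{Mb_{n,1}})$ equals the type $B$ determinant at $z=2$ with its $k=1$ factor $(T_2(d)-2)^{\binom{2n}{n-1}}$ removed. I would verify this against the computed value $\det(\tilde{G}_3^{Mb_{3,1}})=(T_4(d)-2)^{6}(T_6(d)-2)$ and the $n=2$ value $T_4(d)-2$.

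The mechanism I expect is as follows. Each diagram of $Mb_{n,1}$ carries a single crosscap arc, so gluing $m_i$ to the inversion of $m_j$ threads the orientation-reversing crosscap exactly twice; the two threads either fuse into one curve crossing the crosscap twice (contributing $w\mapsto1$ with trivial circles) or remain two crosscap curves (contributing $y\mapsto0$). Hence under $y=0,\,w=1$ every entry of $\tilde{G}_n^{Mb_{n,1}}$ is $0$ or a power of $d$, as in Table \ref{Grammatrixn3tilde}. I would then orthogonalize by the lollipop method, cabling the crosscap with the Jones--Wenzl idempotent $f_k$ and filtering by the number $k$ of strands threading the core, exactly as in Theorem \ref{CH-P}; the graded piece at level $k$ should have dimension $\binom{2n}{n-k}-\binom{2n}{n-k-1}$ (these sum to $|Mb_{n,1}|=\binom{2n}{n-1}$), and distinct levels should be orthogonal. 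Because the crosscap doubles the winding of the $f_k$-cable, closing it up should produce $T_{2k}(d)-2$ for $k\ge2$, precisely the type $B$ value at $z=2$, while the single-strand level $k=1$ is normalized to $1$ by the substitution $w=1$. Telescoping the cumulative sector contributions—the same regrouping by which the differences $\binom{2n}{n-i}-\binom{2n}{n-i-1}$ reassemble into the exponents $\binom{2n}{n-i}$ in Theorems \ref{GRAMA:maintheorem} and \ref{CH-P}—then yields \eqref{eqn:reduction}, and with it the conjecture.

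The hard part will be the crosscap evaluation and the orthogonality in the middle step: proving rigorously that the $f_k$-cable pushed once through the crosscap closes to $T_{2k}(d)-2$ and that the decorated diagrams at distinct levels are orthogonal with the stated graded dimensions. The non-orientability forces the cable across the crosscap, so the clean partial-trace recursion that produces Chebyshev polynomials in the annular setting of Theorem \ref{CH-P} must be replaced by one that tracks the half-twist; one must also confirm that the $w=1,\,y=0$ specialization leaves no residual $z$-dependence in $\tilde{G}_n^{Mb_{n,1}}$ and correctly trivializes the $k=1$ level. Should the direct evaluation resist, a natural alternative is to pass to the orientation double cover, where the Klein bottle is covered by the torus and the crosscap cable lifts to an honest cable of doubled winding whose closed trace is manifestly $T_{2k}(d)$; one would then account for the constant $-2$ through the trivial ($k=0$) contribution of the trace.
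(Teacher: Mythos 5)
The statement you set out to prove is, in this paper, a \emph{conjecture}: the paper contains no proof of it, and your proposal does not supply one either. Your opening reduction is sound but is exactly the paper's own route: cancelling the common factors $\bigl[(d-z)(w(d+z)-2xy)\bigr]^{\binom{2n}{n-1}}$ and $\prod_{k=2}^{n}(T_k(d)^2-z^2)^{\binom{2n}{n-k}}$ of Theorem \ref{Maintheorem} against the conjectured formula (legitimate, since $\mathbb{Z}[A^{\pm 1},w,x,y,z]$ is an integral domain) leaves precisely the identity $\det(\tilde{G}_n^{Mb_{n,1}})=\prod_{k=2}^{n}(T_{2k}(d)-2)^{\binom{2n}{n-k}}$, which is verbatim Conjecture \ref{newconjecture}, stated in the paper as open and verified only for $n=1,2,3,4$. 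So up to this point you have reproduced the paper's observation that Conjecture \ref{newconjecture} implies Conjecture \ref{Conjecture}, not advanced beyond it.

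The genuine gap is the middle step you yourself flag as ``the hard part'': you never prove that the Jones--Wenzl-cabled crosscap closes to $T_{2k}(d)-2$, nor that the proposed levels are orthogonal with graded dimensions $\binom{2n}{n-k}-\binom{2n}{n-k-1}$, and those two claims \emph{are} the entire remaining content. The lollipop argument of \cite{Ch-P} for Theorem \ref{CH-P} hangs the idempotent $f_k$ on an honest inner boundary circle of the annulus; in $\mathit{Mb}\ \hat{\times}\ I$ there is no such circle, and a cable of $k$ strands pushed through a crosscap has its strand order reversed and its framing twisted, so the partial-trace recursion that produces Chebyshev polynomials in the annular setting does not apply as stated --- this is exactly why Conjecture \ref{newconjecture} is open. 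The double-cover fallback has the same status: the Gram pairing does not commute with the transfer to the torus cover, and the claim that the constant $-2$ arises from a $k=0$ sector is an unsupported guess. To be clear, several of your subsidiary observations are correct and worth keeping: the equivalence (not just implication) of the two conjectures given Theorem \ref{Maintheorem}; the identity $T_{2k}(d)-2=T_k(d)^2-4$, which exhibits the target as the type $B$ determinant at $z=2$ with its $k=1$ factor removed; and the fact that the substitution $y=0$, $w=1$ leaves only entries $0$ or $d^a$ in $\tilde{G}_n^{Mb_{n,1}}$ (a $z$-curve in the Klein bottle must intersect any curve through both crosscaps, so $z$ never accompanies $w$). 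But as it stands your proposal reduces one open conjecture to another and sketches, without proof, the lemma that would settle both.
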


By applying Theorem \ref{Maintheorem}, the following conjecture implies Conjecture \ref{Conjecture}. Furthermore, notice that the product in Conjecture \ref{newconjecture} is conjectured to be a factor of $D_n^{Mb}$.

\begin{conjecture}\label{newconjecture} For $n \geq 2$,
    $$\det(\tilde{G}_{n}^{Mb_{n, 1}}) = \prod\limits_{k=2}^n (T_{2k}(d)-2)^{\binom{2n}{n-k}}.$$
\end{conjecture}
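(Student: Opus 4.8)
The plan is to recognize that $\tilde G_n^{Mb_{n,1}}$ is precisely the Gram form on the ``one curve through the crosscap'' sector, so that Conjecture \ref{newconjecture} is exactly the $i=1$ factor $D_{n,1}=\prod_{k=2}^n(T_{2k}(d)-2)^{\binom{2n}{n-k}}$ of Chen's Conjecture \ref{Qi}, and to attack it by adapting the Jones--Wenzl/lollipop technique that Chen and Przytycki used for the type $B$ formula (Theorem \ref{CH-P}) to the non-orientable setting. A preliminary but essential step is to pin down the entries of $\tilde G_n^{Mb_{n,1}}$. For $m_i,m_j\in Mb_{n,1}$ the glued diagram $m_i\cup\overline{m_j}$ in $Kb\ \hat{\times}\ I$ carries exactly one arc through the inner crosscap and one through the outer crosscap, so it falls into one of two cases: either the two crosscap arcs lie on a single closed curve, which then meets both crosscaps and contributes a $w$ with all other components null-homotopic; or they lie on two distinct curves, one meeting only the inner and one only the outer crosscap, forcing both an $x$ and a $y$. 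Under the substitution $y=0,\ w=1$ the second case vanishes and the first contributes $d^{\ell}$, where $\ell$ is the number of trivial loops. The one geometric fact to establish here is that in the first case no non-trivial $z$-loop can coexist with the $w$-curve; I would prove this by a mod-$2$ intersection count with the gluing circle in $Kb\ \hat{\times}\ I$. This yields the structural lemma that $\tilde G_n^{Mb_{n,1}}$ has entries in $\mathbb{Z}[d]$, in agreement with Tables \ref{Grammatrixn2} and \ref{Grammatrixn3tilde}.

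Next I would filter the free module on $Mb_{n,1}$ by the through-degree $k$ of strands running to the crosscap and, following the lollipop method, decorate the core of the crosscap with the $k$-th Jones--Wenzl idempotent $f_k$; orthogonality of the $f_k$ then block-diagonalizes the bilinear form along this filtration. The heart of the argument is the eigenvalue attached to each graded piece: I would compute the scalar by which a cable carrying $f_k$ is multiplied when it is pushed once through a crosscap and the two copies of $Mb\ \hat{\times}\ I$ are doubled to $Kb\ \hat{\times}\ I$. The expectation, matching the cases $n=2,3$, is that this eigenvalue is $\pm(A^{2k}-A^{-2k})$, so that the factor it contributes to the determinant is $(A^{2k}-A^{-2k})^2=T_{2k}(d)-2$, where I am using the double-angle identity $T_{2k}(d)=T_k(d)^2-2$ together with $d=-A^2-A^{-2}$.

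To recover the exponents I would show that the through-degree-exactly-$k$ graded piece has dimension $\binom{2n}{n-k}$, which is the same combinatorics as in Chen--Przytycki; this can be done either by an explicit triangular change of basis on $Mb_{n,1}$, or by showing that the image of the decorating map has dimension $\binom{2n}{n}-\binom{2n}{n-k}$ and taking successive differences. Multiplying the eigenvalue factors against their multiplicities then gives $\det(\tilde G_n^{Mb_{n,1}})=\prod_{k=2}^n (T_{2k}(d)-2)^{\binom{2n}{n-k}}$. The lower limit $k=2$ should emerge automatically: the single forced crosscap arc characterizing $Mb_{n,1}$ raises the minimal through-degree by one, exactly as the shifted product $\prod_{k=1+i}^n$ with $i=1$ predicts in Conjecture \ref{Qi}. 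As an independent consistency check, combining the resulting formula with Theorem \ref{Maintheorem} reproduces Conjecture \ref{Conjecture}, which is already verified for $n\le 4$.

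The hard part will be the crosscap eigenvalue computation. Unlike the annular core in the type $B$ argument, pushing $f_k$ through a crosscap involves a half-twist and an attendant sign coming from non-orientability, and it is precisely this crosscap that ``doubles'' the rotation number and produces $k\mapsto 2k$; rigorously establishing that the net eigenvalue is exactly $\pm(A^{2k}-A^{-2k})$, rather than some framing-dependent variant, is the crux of the proof and the step most likely to require careful bookkeeping of the $\mathbb{Z}[A^{\pm1}]$ normalizations. The secondary obstacles are the no-$z$-loop lemma of the first paragraph and the dimension count for all $n$; both are plausibly routine but must be verified uniformly rather than inferred from the $n=2,3$ data in Tables \ref{Grammatrixn2} and \ref{Grammatrixn3tilde}.
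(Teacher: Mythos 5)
The statement you set out to prove is not proved in the paper at all: Conjecture \ref{newconjecture} is left open there, supported only by computer verification for $n=1,2,3,4$ (Table \ref{Grammatrixn4tilde} records the $n=4$ matrix). So there is no proof of the paper's to compare yours against, and your write-up has to be judged as a stand-alone argument; as such it is a research plan, not a proof. Your preliminary structural analysis of the entries of $\tilde{G}_{n}^{Mb_{n,1}}$ is correct and genuinely useful (exactly one arc through each crosscap; the dichotomy between a single $w$-curve and an $x$-, $y$-pair; entries in $\mathbb{Z}[d]$ after setting $y=0$, $w=1$), but even here your proposed justification of the key lemma fails as stated: a $w$-curve crosses a $z$-curve an even number of times, so its mod-$2$ intersection number with the gluing circle is $0$ and the mod-$2$ count proves nothing. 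What is true, and what you actually need, is that any curve in the class $z$ separates the Klein bottle into two M\"obius bands, each containing one crosscap, so a curve meeting both crosscaps must intersect it geometrically.

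The heart of your argument is missing. The crosscap eigenvalue $\pm(A^{2k}-A^{-2k})$, which is the sole source of the factors $T_{2k}(d)-2$, is offered as an ``expectation, matching the cases $n=2,3$''; you yourself call it the crux, and nothing in the proposal establishes it, nor even pins down the operator whose eigenvalue it is supposed to be (a half-twist acting on an $f_k$-cabled strand through a crosscap, with all $\mathbb{Z}[A^{\pm1}]$ framing normalizations made explicit). The multiplicity count is also internally inconsistent as written: if the through-degree-exactly-$k$ pieces had dimensions $\binom{2n}{n-k}$ for $k=2,\dots,n$, these would sum to $\sum_{j=0}^{n-2}\binom{2n}{j}$, which is not $|Mb_{n,1}|=\binom{2n}{n-1}$ (for $n=3$ this is $7\neq 15$); in the Chen--Przytycki argument behind Theorem \ref{CH-P} the graded pieces have the ballot-type dimensions $\binom{2n}{n-k}-\binom{2n}{n-k-1}$ and the exponents $\binom{2n}{n-k}$ arise by telescoping, a structure your sketch elides, so your ``multiply eigenvalues by multiplicities'' step cannot go through in the form stated. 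Finally, the compatibility of the Jones--Wenzl/lollipop decoration with the ad hoc substitution $y=0$, $w=1$ is assumed rather than argued. In sum, the strategy — a non-orientable analogue of the type $B$ proof, consistent with Theorem \ref{Maintheorem} and with the $i=1$ factor of Conjecture \ref{Qi} — is a sensible program, but the proposal reduces the conjecture to several unproven claims of essentially the same difficulty as the conjecture itself; it does not constitute a proof.
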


    Conjecture \ref{newconjecture} has been verified for $n=1,2,3$ and $4$. In particular the matrix $\tilde{G}_{4}^{Mb_{4, 1}}$ is provided in Table \ref{Grammatrixn4tilde}.

\section*{Acknowledgments}
The first author acknowledges the support by the Australian Research Council grant DP210103136. The second author acknowledges the support of the National Science Foundation through Grant DMS-2212736.

 {\centering
\resizebox{\columnwidth}{!}{
\renewcommand*{\arraystretch}{1.2}\begin{tabular}{ c|c|c|c|c|c|c|c||c|c|c|c|c|c|c|c||c|c|c|c|c|c|c|c||c|c|c|c|c|c|c|c||c|c|c|c|c|c|c|c||c|c|c|c|c|c|c|c||c|c|c|c|c|c|c|c}
        $d^3$ &     $1$ &     $0$ &     $1$ &     $0$ &     $1$ &     $0$ &     $1$ &     $d$ &     $1$ &     $0$ &     $1$ &     $d$ &     $1$ &     $0$ &     $1$ &     $d$ &     $1$ &     $d$ &     $0$ &     $0$ &     $1$ &     $0$ &     $d^2$ &     $0$ &     $0$ &     $d$ &     $1$ &     $d$ &     $d^2$ &     $0$ &     $1$ &     $0$ &     $1$ &     $d$ &     $d^2$ &     $d$ &     $1$ &     $0$ &     $0$ &     $d^2$ &     $d$ &     $0$ &     $0$ &     $0$ &     $0$ &     $0$ &     $d$ &     $d$ &     $0$ &     $d$ &     $1$ &     $0$ &     $0$ &     $0$ &     $1$  \\ \hline
      $1$ &     $d^3$ &     $1$ &     $0$ &     $1$ &     $0$ &     $1$ &     $0$ &     $1$ &     $d$ &     $1$ &     $0$ &     $1$ &     $d$ &     $1$ &     $0$ &     $d^2$ &     $d$ &     $1$ &     $d$ &     $0$ &     $0$ &     $1$ &     $0$ &     $1$ &     $0$ &     $0$ &     $d$ &     $1$ &     $d$ &     $d^2$ &     $0$ &     $0$ &     $0$ &     $1$ &     $d$ &     $d^2$ &     $d$ &     $1$ &     $0$ &     $d$ &     $d^2$ &     $d$ &     $0$ &     $0$ &     $0$ &     $0$ &     $0$ &     $1$ &     $d$ &     $0$ &     $d$ &     $1$ &     $0$ &     $0$ &     $0$  \\ \hline
      $0$ &     $1$ &     $d^3$ &     $1$ &     $0$ &     $1$ &     $0$ &     $1$ &     $0$ &     $1$ &     $d$ &     $1$ &     $0$ &     $1$ &     $d$ &     $1$ &     $0$ &     $d^2$ &     $d$ &     $1$ &     $d$ &     $0$ &     $0$ &     $1$ &     $0$ &     $1$ &     $0$ &     $0$ &     $d$ &     $1$ &     $d$ &     $d^2$ &     $0$ &     $0$ &     $0$ &     $1$ &     $d$ &     $d^2$ &     $d$ &     $1$ &     $0$ &     $d$ &     $d^2$ &     $d$ &     $0$ &     $0$ &     $0$ &     $0$ &     $0$ &     $1$ &     $d$ &     $0$ &     $d$ &     $1$ &     $0$ &     $0$  \\ \hline
      $1$ &     $0$ &     $1$ &     $d^3$ &     $1$ &     $0$ &     $1$ &     $0$ &     $1$ &     $0$ &     $1$ &     $d$ &     $1$ &     $0$ &     $1$ &     $d$ &     $1$ &     $0$ &     $d^2$ &     $d$ &     $1$ &     $d$ &     $0$ &     $0$ &     $d^2$ &     $0$ &     $1$ &     $0$ &     $0$ &     $d$ &     $1$ &     $d$ &     $1$ &     $0$ &     $0$ &     $0$ &     $1$ &     $d$ &     $d^2$ &     $d$ &     $0$ &     $0$ &     $d$ &     $d^2$ &     $d$ &     $0$ &     $0$ &     $0$ &     $0$ &     $0$ &     $1$ &     $d$ &     $0$ &     $d$ &     $1$ &     $0$  \\ \hline
      $0$ &     $1$ &     $0$ &     $1$ &     $d^3$ &     $1$ &     $0$ &     $1$ &     $d$ &     $1$ &     $0$ &     $1$ &     $d$ &     $1$ &     $0$ &     $1$ &     $0$ &     $1$ &     $0$ &     $d^2$ &     $d$ &     $1$ &     $d$ &     $0$ &     $d$ &     $d^2$ &     $0$ &     $1$ &     $0$ &     $0$ &     $d$ &     $1$ &     $d$ &     $1$ &     $0$ &     $0$ &     $0$ &     $1$ &     $d$ &     $d^2$ &     $0$ &     $0$ &     $0$ &     $d$ &     $d^2$ &     $d$ &     $0$ &     $0$ &     $0$ &     $0$ &     $0$ &     $1$ &     $d$ &     $0$ &     $d$ &     $1$  \\ \hline
      $1$ &     $0$ &     $1$ &     $0$ &     $1$ &     $d^3$ &     $1$ &     $0$ &     $1$ &     $d$ &     $1$ &     $0$ &     $1$ &     $d$ &     $1$ &     $0$ &     $0$ &     $0$ &     $1$ &     $0$ &     $d^2$ &     $d$ &     $1$ &     $d$ &     $1$ &     $d$ &     $d^2$ &     $0$ &     $1$ &     $0$ &     $0$ &     $d$ &     $d^2$ &     $d$ &     $1$ &     $0$ &     $0$ &     $0$ &     $1$ &     $d$ &     $0$ &     $0$ &     $0$ &     $0$ &     $d$ &     $d^2$ &     $d$ &     $0$ &     $1$ &     $0$ &     $0$ &     $0$ &     $1$ &     $d$ &     $0$ &     $d$  \\ \hline
      $0$ &     $1$ &     $0$ &     $1$ &     $0$ &     $1$ &     $d^3$ &     $1$ &     $0$ &     $1$ &     $d$ &     $1$ &     $0$ &     $1$ &     $d$ &     $1$ &     $d$ &     $0$ &     $0$ &     $1$ &     $0$ &     $d^2$ &     $d$ &     $1$ &     $d$ &     $1$ &     $d$ &     $d^2$ &     $0$ &     $1$ &     $0$ &     $0$ &     $d$ &     $d^2$ &     $d$ &     $1$ &     $0$ &     $0$ &     $0$ &     $1$ &     $0$ &     $0$ &     $0$ &     $0$ &     $0$ &     $d$ &     $d^2$ &     $d$ &     $d$ &     $1$ &     $0$ &     $0$ &     $0$ &     $1$ &     $d$ &     $0$  \\ \hline
      $1$ &     $0$ &     $1$ &     $0$ &     $1$ &     $0$ &     $1$ &     $d^3$ &     $1$ &     $0$ &     $1$ &     $d$ &     $1$ &     $0$ &     $1$ &     $d$ &     $1$ &     $d$ &     $0$ &     $0$ &     $1$ &     $0$ &     $d^2$ &     $d$ &     $0$ &     $d$ &     $1$ &     $d$ &     $d^2$ &     $0$ &     $1$ &     $0$ &     $1$ &     $d$ &     $d^2$ &     $d$ &     $1$ &     $0$ &     $0$ &     $0$ &     $d$ &     $0$ &     $0$ &     $0$ &     $0$ &     $0$ &     $d$ &     $d^2$ &     $0$ &     $d$ &     $1$ &     $0$ &     $0$ &     $0$ &     $1$ &     $d$  \\ \hline \hline 
      $d$ &     $1$ &     $0$ &     $1$ &     $d$ &     $1$ &     $0$ &     $1$ &     $d^3$ &     $1$ &     $0$ &     $1$ &     $0$ &     $1$ &     $0$ &     $1$ &     $d$ &     $d^2$ &     $0$ &     $1$ &     $d$ &     $0$ &     $0$ &     $1$ &     $d$ &     $1$ &     $0$ &     $0$ &     $d$ &     $1$ &     $0$ &     $d^2$ &     $d$ &     $0$ &     $d$ &     $1$ &     $d$ &     $0$ &     $d$ &     $1$ &     $d^2$ &     $d$ &     $0$ &     $d$ &     $d^2$ &     $0$ &     $0$ &     $0$ &     $d$ &     $1$ &     $d$ &     $d^2$ &     $d$ &     $1$ &     $d$ &     $0$  \\ \hline
      $1$ &     $d$ &     $1$ &     $0$ &     $1$ &     $d$ &     $1$ &     $0$ &     $1$ &     $d^3$ &     $1$ &     $0$ &     $1$ &     $0$ &     $1$ &     $0$ &     $1$ &     $d$ &     $d^2$ &     $0$ &     $1$ &     $d$ &     $0$ &     $0$ &     $d^2$ &     $d$ &     $1$ &     $0$ &     $0$ &     $d$ &     $1$ &     $0$ &     $1$ &     $d$ &     $0$ &     $d$ &     $1$ &     $d$ &     $0$ &     $d$ &     $0$ &     $d^2$ &     $d$ &     $0$ &     $d$ &     $d^2$ &     $0$ &     $0$ &     $0$ &     $d$ &     $1$ &     $d$ &     $d^2$ &     $d$ &     $1$ &     $d$  \\ \hline
      $0$ &     $1$ &     $d$ &     $1$ &     $0$ &     $1$ &     $d$ &     $1$ &     $0$ &     $1$ &     $d^3$ &     $1$ &     $0$ &     $1$ &     $0$ &     $1$ &     $0$ &     $1$ &     $d$ &     $d^2$ &     $0$ &     $1$ &     $d$ &     $0$ &     $0$ &     $d^2$ &     $d$ &     $1$ &     $0$ &     $0$ &     $d$ &     $1$ &     $d$ &     $1$ &     $d$ &     $0$ &     $d$ &     $1$ &     $d$ &     $0$ &     $0$ &     $0$ &     $d^2$ &     $d$ &     $0$ &     $d$ &     $d^2$ &     $0$ &     $d$ &     $0$ &     $d$ &     $1$ &     $d$ &     $d^2$ &     $d$ &     $1$  \\ \hline
      $1$ &     $0$ &     $1$ &     $d$ &     $1$ &     $0$ &     $1$ &     $d$ &     $1$ &     $0$ &     $1$ &     $d^3$ &     $1$ &     $0$ &     $1$ &     $0$ &     $0$ &     $0$ &     $1$ &     $d$ &     $d^2$ &     $0$ &     $1$ &     $d$ &     $1$ &     $0$ &     $d^2$ &     $d$ &     $1$ &     $0$ &     $0$ &     $d$ &     $0$ &     $d$ &     $1$ &     $d$ &     $0$ &     $d$ &     $1$ &     $d$ &     $0$ &     $0$ &     $0$ &     $d^2$ &     $d$ &     $0$ &     $d$ &     $d^2$ &     $1$ &     $d$ &     $0$ &     $d$ &     $1$ &     $d$ &     $d^2$ &     $d$  \\ \hline
      $d$ &     $1$ &     $0$ &     $1$ &     $d$ &     $1$ &     $0$ &     $1$ &     $0$ &     $1$ &     $0$ &     $1$ &     $d^3$ &     $1$ &     $0$ &     $1$ &     $d$ &     $0$ &     $0$ &     $1$ &     $d$ &     $d^2$ &     $0$ &     $1$ &     $d$ &     $1$ &     $0$ &     $d^2$ &     $d$ &     $1$ &     $0$ &     $0$ &     $d$ &     $0$ &     $d$ &     $1$ &     $d$ &     $0$ &     $d$ &     $1$ &     $d^2$ &     $0$ &     $0$ &     $0$ &     $d^2$ &     $d$ &     $0$ &     $d$ &     $d$ &     $1$ &     $d$ &     $0$ &     $d$ &     $1$ &     $d$ &     $d^2$  \\ \hline
      $1$ &     $d$ &     $1$ &     $0$ &     $1$ &     $d$ &     $1$ &     $0$ &     $1$ &     $0$ &     $1$ &     $0$ &     $1$ &     $d^3$ &     $1$ &     $0$ &     $1$ &     $d$ &     $0$ &     $0$ &     $1$ &     $d$ &     $d^2$ &     $0$ &     $0$ &     $d$ &     $1$ &     $0$ &     $d^2$ &     $d$ &     $1$ &     $0$ &     $1$ &     $d$ &     $0$ &     $d$ &     $1$ &     $d$ &     $0$ &     $d$ &     $d$ &     $d^2$ &     $0$ &     $0$ &     $0$ &     $d^2$ &     $d$ &     $0$ &     $d^2$ &     $d$ &     $1$ &     $d$ &     $0$ &     $d$ &     $1$ &     $d$  \\ \hline
      $0$ &     $1$ &     $d$ &     $1$ &     $0$ &     $1$ &     $d$ &     $1$ &     $0$ &     $1$ &     $0$ &     $1$ &     $0$ &     $1$ &     $d^3$ &     $1$ &     $0$ &     $1$ &     $d$ &     $0$ &     $0$ &     $1$ &     $d$ &     $d^2$ &     $0$ &     $0$ &     $d$ &     $1$ &     $0$ &     $d^2$ &     $d$ &     $1$ &     $d$ &     $1$ &     $d$ &     $0$ &     $d$ &     $1$ &     $d$ &     $0$ &     $0$ &     $d$ &     $d^2$ &     $0$ &     $0$ &     $0$ &     $d^2$ &     $d$ &     $d$ &     $d^2$ &     $d$ &     $1$ &     $d$ &     $0$ &     $d$ &     $1$  \\ \hline
      $1$ &     $0$ &     $1$ &     $d$ &     $1$ &     $0$ &     $1$ &     $d$ &     $1$ &     $0$ &     $1$ &     $0$ &     $1$ &     $0$ &     $1$ &     $d^3$ &     $d^2$ &     $0$ &     $1$ &     $d$ &     $0$ &     $0$ &     $1$ &     $d$ &     $1$ &     $0$ &     $0$ &     $d$ &     $1$ &     $0$ &     $d^2$ &     $d$ &     $0$ &     $d$ &     $1$ &     $d$ &     $0$ &     $d$ &     $1$ &     $d$ &     $d$ &     $0$ &     $d$ &     $d^2$ &     $0$ &     $0$ &     $0$ &     $d^2$ &     $1$ &     $d$ &     $d^2$ &     $d$ &     $1$ &     $d$ &     $0$ &     $d$  \\ \hline \hline
      $d$ &     $d^2$ &     $0$ &     $1$ &     $0$ &     $0$ &     $d$ &     $1$ &     $d$ &     $1$ &     $0$ &     $0$ &     $d$ &     $1$ &     $0$ &     $d^2$ &     $d^3$ &     $1$ &     $0$ &     $0$ &     $0$ &     $0$ &     $0$ &     $1$ &     $d$ &     $0$ &     $0$ &     $d^2$ &     $d$ &     $1$ &     $0$ &     $1$ &     $0$ &     $1$ &     $d$ &     $d^2$ &     $d$ &     $0$ &     $0$ &     $1$ &     $d^2$ &     $d$ &     $0$ &     $d$ &     $0$ &     $0$ &     $0$ &     $d$ &     $d$ &     $0$ &     $d$ &     $d^2$ &     $0$ &     $1$ &     $0$ &     $1$  \\ \hline
      $1$ &     $d$ &     $d^2$ &     $0$ &     $1$ &     $0$ &     $0$ &     $d$ &     $d^2$ &     $d$ &     $1$ &     $0$ &     $0$ &     $d$ &     $1$ &     $0$ &     $1$ &     $d^3$ &     $1$ &     $0$ &     $0$ &     $0$ &     $0$ &     $0$ &     $1$ &     $d$ &     $0$ &     $0$ &     $d^2$ &     $d$ &     $1$ &     $0$ &     $1$ &     $0$ &     $1$ &     $d$ &     $d^2$ &     $d$ &     $0$ &     $0$ &     $d$ &     $d^2$ &     $d$ &     $0$ &     $d$ &     $0$ &     $0$ &     $0$ &     $1$ &     $d$ &     $0$ &     $d$ &     $d^2$ &     $0$ &     $1$ &     $0$  \\ \hline
      $d$ &     $1$ &     $d$ &     $d^2$ &     $0$ &     $1$ &     $0$ &     $0$ &     $0$ &     $d^2$ &     $d$ &     $1$ &     $0$ &     $0$ &     $d$ &     $1$ &     $0$ &     $1$ &     $d^3$ &     $1$ &     $0$ &     $0$ &     $0$ &     $0$ &     $0$ &     $1$ &     $d$ &     $0$ &     $0$ &     $d^2$ &     $d$ &     $1$ &     $0$ &     $1$ &     $0$ &     $1$ &     $d$ &     $d^2$ &     $d$ &     $0$ &     $0$ &     $d$ &     $d^2$ &     $d$ &     $0$ &     $d$ &     $0$ &     $0$ &     $0$ &     $1$ &     $d$ &     $0$ &     $d$ &     $d^2$ &     $0$ &     $1$  \\ \hline
      $0$ &     $d$ &     $1$ &     $d$ &     $d^2$ &     $0$ &     $1$ &     $0$ &     $1$ &     $0$ &     $d^2$ &     $d$ &     $1$ &     $0$ &     $0$ &     $d$ &     $0$ &     $0$ &     $1$ &     $d^3$ &     $1$ &     $0$ &     $0$ &     $0$ &     $1$ &     $0$ &     $1$ &     $d$ &     $0$ &     $0$ &     $d^2$ &     $d$ &     $0$ &     $0$ &     $1$ &     $0$ &     $1$ &     $d$ &     $d^2$ &     $d$ &     $0$ &     $0$ &     $d$ &     $d^2$ &     $d$ &     $0$ &     $d$ &     $0$ &     $1$ &     $0$ &     $1$ &     $d$ &     $0$ &     $d$ &     $d^2$ &     $0$  \\ \hline
      $0$ &     $0$ &     $d$ &     $1$ &     $d$ &     $d^2$ &     $0$ &     $1$ &     $d$ &     $1$ &     $0$ &     $d^2$ &     $d$ &     $1$ &     $0$ &     $0$ &     $0$ &     $0$ &     $0$ &     $1$ &     $d^3$ &     $1$ &     $0$ &     $0$ &     $d$ &     $1$ &     $0$ &     $1$ &     $d$ &     $0$ &     $0$ &     $d^2$ &     $d$ &     $0$ &     $0$ &     $1$ &     $0$ &     $1$ &     $d$ &     $d^2$ &     $0$ &     $0$ &     $0$ &     $d$ &     $d^2$ &     $d$ &     $0$ &     $d$ &     $0$ &     $1$ &     $0$ &     $1$ &     $d$ &     $0$ &     $d$ &     $d^2$  \\ \hline
      $1$ &     $0$ &     $0$ &     $d$ &     $1$ &     $d$ &     $d^2$ &     $0$ &     $0$ &     $d$ &     $1$ &     $0$ &     $d^2$ &     $d$ &     $1$ &     $0$ &     $0$ &     $0$ &     $0$ &     $0$ &     $1$ &     $d^3$ &     $1$ &     $0$ &     $d^2$ &     $d$ &     $1$ &     $0$ &     $1$ &     $d$ &     $0$ &     $0$ &     $d^2$ &     $d$ &     $0$ &     $0$ &     $1$ &     $0$ &     $1$ &     $d$ &     $d$ &     $0$ &     $0$ &     $0$ &     $d$ &     $d^2$ &     $d$ &     $0$ &     $d^2$ &     $0$ &     $1$ &     $0$ &     $1$ &     $d$ &     $0$ &     $d$  \\ \hline
      $0$ &     $1$ &     $0$ &     $0$ &     $d$ &     $1$ &     $d$ &     $d^2$ &     $0$ &     $0$ &     $d$ &     $1$ &     $0$ &     $d^2$ &     $d$ &     $1$ &     $0$ &     $0$ &     $0$ &     $0$ &     $0$ &     $1$ &     $d^3$ &     $1$ &     $0$ &     $d^2$ &     $d$ &     $1$ &     $0$ &     $1$ &     $d$ &     $0$ &     $d$ &     $d^2$ &     $d$ &     $0$ &     $0$ &     $1$ &     $0$ &     $1$ &     $0$ &     $d$ &     $0$ &     $0$ &     $0$ &     $d$ &     $d^2$ &     $d$ &     $d$ &     $d^2$ &     $0$ &     $1$ &     $0$ &     $1$ &     $d$ &     $0$  \\ \hline
      $d^2$ &     $0$ &     $1$ &     $0$ &     $0$ &     $d$ &     $1$ &     $d$ &     $1$ &     $0$ &     $0$ &     $d$ &     $1$ &     $0$ &     $d^2$ &     $d$ &     $1$ &     $0$ &     $0$ &     $0$ &     $0$ &     $0$ &     $1$ &     $d^3$ &     $0$ &     $0$ &     $d^2$ &     $d$ &     $1$ &     $0$ &     $1$ &     $d$ &     $1$ &     $d$ &     $d^2$ &     $d$ &     $0$ &     $0$ &     $1$ &     $0$ &     $d$ &     $0$ &     $d$ &     $0$ &     $0$ &     $0$ &     $d$ &     $d^2$ &     $0$ &     $d$ &     $d^2$ &     $0$ &     $1$ &     $0$ &     $1$ &     $d$  \\ \hline \hline
      $0$ &     $1$ &     $0$ &     $d^2$ &     $d$ &     $1$ &     $d$ &     $0$ &     $d$ &     $d^2$ &     $0$ &     $1$ &     $d$ &     $0$ &     $0$ &     $1$ &     $d$ &     $1$ &     $0$ &     $1$ &     $d$ &     $d^2$ &     $0$ &     $0$ &     $d^3$ &     $1$ &     $0$ &     $0$ &     $0$ &     $0$ &     $0$ &     $1$ &     $d$ &     $1$ &     $0$ &     $1$ &     $0$ &     $0$ &     $d$ &     $d^2$ &     $0$ &     $d$ &     $0$ &     $d$ &     $d^2$ &     $d$ &     $0$ &     $0$ &     $0$ &     $1$ &     $0$ &     $d^2$ &     $d$ &     $0$ &     $d$ &     $1$  \\ \hline
      $0$ &     $0$ &     $1$ &     $0$ &     $d^2$ &     $d$ &     $1$ &     $d$ &     $1$ &     $d$ &     $d^2$ &     $0$ &     $1$ &     $d$ &     $0$ &     $0$ &     $0$ &     $d$ &     $1$ &     $0$ &     $1$ &     $d$ &     $d^2$ &     $0$ &     $1$ &     $d^3$ &     $1$ &     $0$ &     $0$ &     $0$ &     $0$ &     $0$ &     $d^2$ &     $d$ &     $1$ &     $0$ &     $1$ &     $0$ &     $0$ &     $d$ &     $0$ &     $0$ &     $d$ &     $0$ &     $d$ &     $d^2$ &     $d$ &     $0$ &     $1$ &     $0$ &     $1$ &     $0$ &     $d^2$ &     $d$ &     $0$ &     $d$  \\ \hline
      $d$ &     $0$ &     $0$ &     $1$ &     $0$ &     $d^2$ &     $d$ &     $1$ &     $0$ &     $1$ &     $d$ &     $d^2$ &     $0$ &     $1$ &     $d$ &     $0$ &     $0$ &     $0$ &     $d$ &     $1$ &     $0$ &     $1$ &     $d$ &     $d^2$ &     $0$ &     $1$ &     $d^3$ &     $1$ &     $0$ &     $0$ &     $0$ &     $0$ &     $d$ &     $d^2$ &     $d$ &     $1$ &     $0$ &     $1$ &     $0$ &     $0$ &     $0$ &     $0$ &     $0$ &     $d$ &     $0$ &     $d$ &     $d^2$ &     $d$ &     $d$ &     $1$ &     $0$ &     $1$ &     $0$ &     $d^2$ &     $d$ &     $0$  \\ \hline
      $1$ &     $d$ &     $0$ &     $0$ &     $1$ &     $0$ &     $d^2$ &     $d$ &     $0$ &     $0$ &     $1$ &     $d$ &     $d^2$ &     $0$ &     $1$ &     $d$ &     $d^2$ &     $0$ &     $0$ &     $d$ &     $1$ &     $0$ &     $1$ &     $d$ &     $0$ &     $0$ &     $1$ &     $d^3$ &     $1$ &     $0$ &     $0$ &     $0$ &     $0$ &     $d$ &     $d^2$ &     $d$ &     $1$ &     $0$ &     $1$ &     $0$ &     $d$ &     $0$ &     $0$ &     $0$ &     $d$ &     $0$ &     $d$ &     $d^2$ &     $0$ &     $d$ &     $1$ &     $0$ &     $1$ &     $0$ &     $d^2$ &     $d$  \\ \hline
      $d$ &     $1$ &     $d$ &     $0$ &     $0$ &     $1$ &     $0$ &     $d^2$ &     $d$ &     $0$ &     $0$ &     $1$ &     $d$ &     $d^2$ &     $0$ &     $1$ &     $d$ &     $d^2$ &     $0$ &     $0$ &     $d$ &     $1$ &     $0$ &     $1$ &     $0$ &     $0$ &     $0$ &     $1$ &     $d^3$ &     $1$ &     $0$ &     $0$ &     $0$ &     $0$ &     $d$ &     $d^2$ &     $d$ &     $1$ &     $0$ &     $1$ &     $d^2$ &     $d$ &     $0$ &     $0$ &     $0$ &     $d$ &     $0$ &     $d$ &     $d$ &     $0$ &     $d$ &     $1$ &     $0$ &     $1$ &     $0$ &     $d^2$  \\ \hline
      $d^2$ &     $d$ &     $1$ &     $d$ &     $0$ &     $0$ &     $1$ &     $0$ &     $1$ &     $d$ &     $0$ &     $0$ &     $1$ &     $d$ &     $d^2$ &     $0$ &     $1$ &     $d$ &     $d^2$ &     $0$ &     $0$ &     $d$ &     $1$ &     $0$ &     $0$ &     $0$ &     $0$ &     $0$ &     $1$ &     $d^3$ &     $1$ &     $0$ &     $1$ &     $0$ &     $0$ &     $d$ &     $d^2$ &     $d$ &     $1$ &     $0$ &     $d$ &     $d^2$ &     $d$ &     $0$ &     $0$ &     $0$ &     $d$ &     $0$ &     $d^2$ &     $d$ &     $0$ &     $d$ &     $1$ &     $0$ &     $1$ &     $0$  \\ \hline
      $0$ &     $d^2$ &     $d$ &     $1$ &     $d$ &     $0$ &     $0$ &     $1$ &     $0$ &     $1$ &     $d$ &     $0$ &     $0$ &     $1$ &     $d$ &     $d^2$ &     $0$ &     $1$ &     $d$ &     $d^2$ &     $0$ &     $0$ &     $d$ &     $1$ &     $0$ &     $0$ &     $0$ &     $0$ &     $0$ &     $1$ &     $d^3$ &     $1$ &     $0$ &     $1$ &     $0$ &     $0$ &     $d$ &     $d^2$ &     $d$ &     $1$ &     $0$ &     $d$ &     $d^2$ &     $d$ &     $0$ &     $0$ &     $0$ &     $d$ &     $0$ &     $d^2$ &     $d$ &     $0$ &     $d$ &     $1$ &     $0$ &     $1$  \\ \hline
      $1$ &     $0$ &     $d^2$ &     $d$ &     $1$ &     $d$ &     $0$ &     $0$ &     $d^2$ &     $0$ &     $1$ &     $d$ &     $0$ &     $0$ &     $1$ &     $d$ &     $1$ &     $0$ &     $1$ &     $d$ &     $d^2$ &     $0$ &     $0$ &     $d$ &     $1$ &     $0$ &     $0$ &     $0$ &     $0$ &     $0$ &     $1$ &     $d^3$ &     $1$ &     $0$ &     $1$ &     $0$ &     $0$ &     $d$ &     $d^2$ &     $d$ &     $d$ &     $0$ &     $d$ &     $d^2$ &     $d$ &     $0$ &     $0$ &     $0$ &     $1$ &     $0$ &     $d^2$ &     $d$ &     $0$ &     $d$ &     $1$ &     $0$  \\ \hline \hline
      $0$ &     $0$ &     $0$ &     $1$ &     $d$ &     $d^2$ &     $d$ &     $1$ &     $d$ &     $1$ &     $d$ &     $0$ &     $d$ &     $1$ &     $d$ &     $0$ &     $0$ &     $1$ &     $0$ &     $0$ &     $d$ &     $d^2$ &     $d$ &     $1$ &     $d$ &     $d^2$ &     $d$ &     $0$ &     $0$ &     $1$ &     $0$ &     $1$ &     $d^3$ &     $1$ &     $d$ &     $0$ &     $0$ &     $0$ &     $d$ &     $1$ &     $0$ &     $0$ &     $0$ &     $0$ &     $d^2$ &     $d$ &     $d^2$ &     $0$ &     $d$ &     $0$ &     $0$ &     $0$ &     $d$ &     $1$ &     $0$ &     $1$  \\ \hline
      $1$ &     $0$ &     $0$ &     $0$ &     $1$ &     $d$ &     $d^2$ &     $d$ &     $0$ &     $d$ &     $1$ &     $d$ &     $0$ &     $d$ &     $1$ &     $d$ &     $1$ &     $0$ &     $1$ &     $0$ &     $0$ &     $d$ &     $d^2$ &     $d$ &     $1$ &     $d$ &     $d^2$ &     $d$ &     $0$ &     $0$ &     $1$ &     $0$ &     $1$ &     $d^3$ &     $1$ &     $d$ &     $0$ &     $0$ &     $0$ &     $d$ &     $0$ &     $0$ &     $0$ &     $0$ &     $0$ &     $d^2$ &     $d$ &     $d^2$ &     $1$ &     $d$ &     $0$ &     $0$ &     $0$ &     $d$ &     $1$ &     $0$  \\ \hline
      $d$ &     $1$ &     $0$ &     $0$ &     $0$ &     $1$ &     $d$ &     $d^2$ &     $d$ &     $0$ &     $d$ &     $1$ &     $d$ &     $0$ &     $d$ &     $1$ &     $d$ &     $1$ &     $0$ &     $1$ &     $0$ &     $0$ &     $d$ &     $d^2$ &     $0$ &     $1$ &     $d$ &     $d^2$ &     $d$ &     $0$ &     $0$ &     $1$ &     $d$ &     $1$ &     $d^3$ &     $1$ &     $d$ &     $0$ &     $0$ &     $0$ &     $d^2$ &     $0$ &     $0$ &     $0$ &     $0$ &     $0$ &     $d^2$ &     $d$ &     $0$ &     $1$ &     $d$ &     $0$ &     $0$ &     $0$ &     $d$ &     $1$  \\ \hline
      $d^2$ &     $d$ &     $1$ &     $0$ &     $0$ &     $0$ &     $1$ &     $d$ &     $1$ &     $d$ &     $0$ &     $d$ &     $1$ &     $d$ &     $0$ &     $d$ &     $d^2$ &     $d$ &     $1$ &     $0$ &     $1$ &     $0$ &     $0$ &     $d$ &     $1$ &     $0$ &     $1$ &     $d$ &     $d^2$ &     $d$ &     $0$ &     $0$ &     $0$ &     $d$ &     $1$ &     $d^3$ &     $1$ &     $d$ &     $0$ &     $0$ &     $d$ &     $d^2$ &     $0$ &     $0$ &     $0$ &     $0$ &     $0$ &     $d^2$ &     $1$ &     $0$ &     $1$ &     $d$ &     $0$ &     $0$ &     $0$ &     $d$  \\ \hline
      $d$ &     $d^2$ &     $d$ &     $1$ &     $0$ &     $0$ &     $0$ &     $1$ &     $d$ &     $1$ &     $d$ &     $0$ &     $d$ &     $1$ &     $d$ &     $0$ &     $d$ &     $d^2$ &     $d$ &     $1$ &     $0$ &     $1$ &     $0$ &     $0$ &     $0$ &     $1$ &     $0$ &     $1$ &     $d$ &     $d^2$ &     $d$ &     $0$ &     $0$ &     $0$ &     $d$ &     $1$ &     $d^3$ &     $1$ &     $d$ &     $0$ &     $d^2$ &     $d$ &     $d^2$ &     $0$ &     $0$ &     $0$ &     $0$ &     $0$ &     $d$ &     $1$ &     $0$ &     $1$ &     $d$ &     $0$ &     $0$ &     $0$  \\ \hline
      $1$ &     $d$ &     $d^2$ &     $d$ &     $1$ &     $0$ &     $0$ &     $0$ &     $0$ &     $d$ &     $1$ &     $d$ &     $0$ &     $d$ &     $1$ &     $d$ &     $0$ &     $d$ &     $d^2$ &     $d$ &     $1$ &     $0$ &     $1$ &     $0$ &     $0$ &     $0$ &     $1$ &     $0$ &     $1$ &     $d$ &     $d^2$ &     $d$ &     $0$ &     $0$ &     $0$ &     $d$ &     $1$ &     $d^3$ &     $1$ &     $d$ &     $0$ &     $d^2$ &     $d$ &     $d^2$ &     $0$ &     $0$ &     $0$ &     $0$ &     $0$ &     $d$ &     $1$ &     $0$ &     $1$ &     $d$ &     $0$ &     $0$  \\ \hline
      $0$ &     $1$ &     $d$ &     $d^2$ &     $d$ &     $1$ &     $0$ &     $0$ &     $d$ &     $0$ &     $d$ &     $1$ &     $d$ &     $0$ &     $d$ &     $1$ &     $0$ &     $0$ &     $d$ &     $d^2$ &     $d$ &     $1$ &     $0$ &     $1$ &     $d$ &     $0$ &     $0$ &     $1$ &     $0$ &     $1$ &     $d$ &     $d^2$ &     $d$ &     $0$ &     $0$ &     $0$ &     $d$ &     $1$ &     $d^3$ &     $1$ &     $0$ &     $0$ &     $d^2$ &     $d$ &     $d^2$ &     $0$ &     $0$ &     $0$ &     $0$ &     $0$ &     $d$ &     $1$ &     $0$ &     $1$ &     $d$ &     $0$  \\ \hline
      $0$ &     $0$ &     $1$ &     $d$ &     $d^2$ &     $d$ &     $1$ &     $0$ &     $1$ &     $d$ &     $0$ &     $d$ &     $1$ &     $d$ &     $0$ &     $d$ &     $1$ &     $0$ &     $0$ &     $d$ &     $d^2$ &     $d$ &     $1$ &     $0$ &     $d^2$ &     $d$ &     $0$ &     $0$ &     $1$ &     $0$ &     $1$ &     $d$ &     $1$ &     $d$ &     $0$ &     $0$ &     $0$ &     $d$ &     $1$ &     $d^3$ &     $0$ &     $0$ &     $0$ &     $d^2$ &     $d$ &     $d^2$ &     $0$ &     $0$ &     $0$ &     $0$ &     $0$ &     $d$ &     $1$ &     $0$ &     $1$ &     $d$  \\ \hline \hline
      $d^2$ &     $d$ &     $0$ &     $0$ &     $0$ &     $0$ &     $0$ &     $d$ &     $d^2$ &     $0$ &     $0$ &     $0$ &     $d^2$ &     $d$ &     $0$ &     $d$ &     $d^2$ &     $d$ &     $0$ &     $0$ &     $0$ &     $d$ &     $0$ &     $d$ &     $0$ &     $0$ &     $0$ &     $d$ &     $d^2$ &     $d$ &     $0$ &     $d$ &     $0$ &     $0$ &     $d^2$ &     $d$ &     $d^2$ &     $0$ &     $0$ &     $0$ &     $d^3$ &     $1$ &     $0$ &     $1$ &     $0$ &     $1$ &     $0$ &     $1$ &     $d^2$ &     $0$ &     $d^2$ &     $d$ &     $0$ &     $d$ &     $0$ &     $d$  \\ \hline
      $d$ &     $d^2$ &     $d$ &     $0$ &     $0$ &     $0$ &     $0$ &     $0$ &     $d$ &     $d^2$ &     $0$ &     $0$ &     $0$ &     $d^2$ &     $d$ &     $0$ &     $d$ &     $d^2$ &     $d$ &     $0$ &     $0$ &     $0$ &     $d$ &     $0$ &     $d$ &     $0$ &     $0$ &     $0$ &     $d$ &     $d^2$ &     $d$ &     $0$ &     $0$ &     $0$ &     $0$ &     $d^2$ &     $d$ &     $d^2$ &     $0$ &     $0$ &     $1$ &     $d^3$ &     $1$ &     $0$ &     $1$ &     $0$ &     $1$ &     $0$ &     $d$ &     $d^2$ &     $0$ &     $d^2$ &     $d$ &     $0$ &     $d$ &     $0$  \\ \hline
      $0$ &     $d$ &     $d^2$ &     $d$ &     $0$ &     $0$ &     $0$ &     $0$ &     $0$ &     $d$ &     $d^2$ &     $0$ &     $0$ &     $0$ &     $d^2$ &     $d$ &     $0$ &     $d$ &     $d^2$ &     $d$ &     $0$ &     $0$ &     $0$ &     $d$ &     $0$ &     $d$ &     $0$ &     $0$ &     $0$ &     $d$ &     $d^2$ &     $d$ &     $0$ &     $0$ &     $0$ &     $0$ &     $d^2$ &     $d$ &     $d^2$ &     $0$ &     $0$ &     $1$ &     $d^3$ &     $1$ &     $0$ &     $1$ &     $0$ &     $1$ &     $0$ &     $d$ &     $d^2$ &     $0$ &     $d^2$ &     $d$ &     $0$ &     $d$  \\ \hline
      $0$ &     $0$ &     $d$ &     $d^2$ &     $d$ &     $0$ &     $0$ &     $0$ &     $d$ &     $0$ &     $d$ &     $d^2$ &     $0$ &     $0$ &     $0$ &     $d^2$ &     $d$ &     $0$ &     $d$ &     $d^2$ &     $d$ &     $0$ &     $0$ &     $0$ &     $d$ &     $0$ &     $d$ &     $0$ &     $0$ &     $0$ &     $d$ &     $d^2$ &     $0$ &     $0$ &     $0$ &     $0$ &     $0$ &     $d^2$ &     $d$ &     $d^2$ &     $1$ &     $0$ &     $1$ &     $d^3$ &     $1$ &     $0$ &     $1$ &     $0$ &     $d$ &     $0$ &     $d$ &     $d^2$ &     $0$ &     $d^2$ &     $d$ &     $0$  \\ \hline
      $0$ &     $0$ &     $0$ &     $d$ &     $d^2$ &     $d$ &     $0$ &     $0$ &     $d^2$ &     $d$ &     $0$ &     $d$ &     $d^2$ &     $0$ &     $0$ &     $0$ &     $0$ &     $d$ &     $0$ &     $d$ &     $d^2$ &     $d$ &     $0$ &     $0$ &     $d^2$ &     $d$ &     $0$ &     $d$ &     $0$ &     $0$ &     $0$ &     $d$ &     $d^2$ &     $0$ &     $0$ &     $0$ &     $0$ &     $0$ &     $d^2$ &     $d$ &     $0$ &     $1$ &     $0$ &     $1$ &     $d^3$ &     $1$ &     $0$ &     $1$ &     $0$ &     $d$ &     $0$ &     $d$ &     $d^2$ &     $0$ &     $d^2$ &     $d$  \\ \hline
      $0$ &     $0$ &     $0$ &     $0$ &     $d$ &     $d^2$ &     $d$ &     $0$ &     $0$ &     $d^2$ &     $d$ &     $0$ &     $d$ &     $d^2$ &     $0$ &     $0$ &     $0$ &     $0$ &     $d$ &     $0$ &     $d$ &     $d^2$ &     $d$ &     $0$ &     $d$ &     $d^2$ &     $d$ &     $0$ &     $d$ &     $0$ &     $0$ &     $0$ &     $d$ &     $d^2$ &     $0$ &     $0$ &     $0$ &     $0$ &     $0$ &     $d^2$ &     $1$ &     $0$ &     $1$ &     $0$ &     $1$ &     $d^3$ &     $1$ &     $0$ &     $d$ &     $0$ &     $d$ &     $0$ &     $d$ &     $d^2$ &     $0$ &     $d^2$  \\ \hline
      $0$ &     $0$ &     $0$ &     $0$ &     $0$ &     $d$ &     $d^2$ &     $d$ &     $0$ &     $0$ &     $d^2$ &     $d$ &     $0$ &     $d$ &     $d^2$ &     $0$ &     $0$ &     $0$ &     $0$ &     $d$ &     $0$ &     $d$ &     $d^2$ &     $d$ &     $0$ &     $d$ &     $d^2$ &     $d$ &     $0$ &     $d$ &     $0$ &     $0$ &     $d^2$ &     $d$ &     $d^2$ &     $0$ &     $0$ &     $0$ &     $0$ &     $0$ &     $0$ &     $1$ &     $0$ &     $1$ &     $0$ &     $1$ &     $d^3$ &     $1$ &     $d^2$ &     $d$ &     $0$ &     $d$ &     $0$ &     $d$ &     $d^2$ &     $0$  \\ \hline
      $d$ &     $0$ &     $0$ &     $0$ &     $0$ &     $0$ &     $d$ &     $d^2$ &     $0$ &     $0$ &     $0$ &     $d^2$ &     $d$ &     $0$ &     $d$ &     $d^2$ &     $d$ &     $0$ &     $0$ &     $0$ &     $d$ &     $0$ &     $d$ &     $d^2$ &     $0$ &     $0$ &     $d$ &     $d^2$ &     $d$ &     $0$ &     $d$ &     $0$ &     $0$ &     $d^2$ &     $d$ &     $d^2$ &     $0$ &     $0$ &     $0$ &     $0$ &     $1$ &     $0$ &     $1$ &     $0$ &     $1$ &     $0$ &     $1$ &     $d^3$ &     $0$ &     $d^2$ &     $d$ &     $0$ &     $d$ &     $0$ &     $d$ &     $d^2$  \\ \hline \hline
      $d$ &     $1$ &     $0$ &     $0$ &     $0$ &     $1$ &     $d$ &     $0$ &     $d$ &     $0$ &     $d$ &     $1$ &     $d$ &     $d^2$ &     $d$ &     $1$ &     $d$ &     $1$ &     $0$ &     $1$ &     $0$ &     $d^2$ &     $d$ &     $0$ &     $0$ &     $1$ &     $d$ &     $0$ &     $d$ &     $d^2$ &     $0$ &     $1$ &     $d$ &     $1$ &     $0$ &     $1$ &     $d$ &     $0$ &     $0$ &     $0$ &     $d^2$ &     $d$ &     $0$ &     $d$ &     $0$ &     $d$ &     $d^2$ &     $0$ &     $d^3$ &     $1$ &     $d$ &     $d^2$ &     $0$ &     $d^2$ &     $d$ &     $1$  \\ \hline
      $0$ &     $d$ &     $1$ &     $0$ &     $0$ &     $0$ &     $1$ &     $d$ &     $1$ &     $d$ &     $0$ &     $d$ &     $1$ &     $d$ &     $d^2$ &     $d$ &     $0$ &     $d$ &     $1$ &     $0$ &     $1$ &     $0$ &     $d^2$ &     $d$ &     $1$ &     $0$ &     $1$ &     $d$ &     $0$ &     $d$ &     $d^2$ &     $0$ &     $0$ &     $d$ &     $1$ &     $0$ &     $1$ &     $d$ &     $0$ &     $0$ &     $0$ &     $d^2$ &     $d$ &     $0$ &     $d$ &     $0$ &     $d$ &     $d^2$ &     $1$ &     $d^3$ &     $1$ &     $d$ &     $d^2$ &     $0$ &     $d^2$ &     $d$  \\ \hline
      $d$ &     $0$ &     $d$ &     $1$ &     $0$ &     $0$ &     $0$ &     $1$ &     $d$ &     $1$ &     $d$ &     $0$ &     $d$ &     $1$ &     $d$ &     $d^2$ &     $d$ &     $0$ &     $d$ &     $1$ &     $0$ &     $1$ &     $0$ &     $d^2$ &     $0$ &     $1$ &     $0$ &     $1$ &     $d$ &     $0$ &     $d$ &     $d^2$ &     $0$ &     $0$ &     $d$ &     $1$ &     $0$ &     $1$ &     $d$ &     $0$ &     $d^2$ &     $0$ &     $d^2$ &     $d$ &     $0$ &     $d$ &     $0$ &     $d$ &     $d$ &     $1$ &     $d^3$ &     $1$ &     $d$ &     $d^2$ &     $0$ &     $d^2$  \\ \hline
      $1$ &     $d$ &     $0$ &     $d$ &     $1$ &     $0$ &     $0$ &     $0$ &     $d^2$ &     $d$ &     $1$ &     $d$ &     $0$ &     $d$ &     $1$ &     $d$ &     $d^2$ &     $d$ &     $0$ &     $d$ &     $1$ &     $0$ &     $1$ &     $0$ &     $d^2$ &     $0$ &     $1$ &     $0$ &     $1$ &     $d$ &     $0$ &     $d$ &     $0$ &     $0$ &     $0$ &     $d$ &     $1$ &     $0$ &     $1$ &     $d$ &     $d$ &     $d^2$ &     $0$ &     $d^2$ &     $d$ &     $0$ &     $d$ &     $0$ &     $d^2$ &     $d$ &     $1$ &     $d^3$ &     $1$ &     $d$ &     $d^2$ &     $0$  \\ \hline
      $0$ &     $1$ &     $d$ &     $0$ &     $d$ &     $1$ &     $0$ &     $0$ &     $d$ &     $d^2$ &     $d$ &     $1$ &     $d$ &     $0$ &     $d$ &     $1$ &     $0$ &     $d^2$ &     $d$ &     $0$ &     $d$ &     $1$ &     $0$ &     $1$ &     $d$ &     $d^2$ &     $0$ &     $1$ &     $0$ &     $1$ &     $d$ &     $0$ &     $d$ &     $0$ &     $0$ &     $0$ &     $d$ &     $1$ &     $0$ &     $1$ &     $0$ &     $d$ &     $d^2$ &     $0$ &     $d^2$ &     $d$ &     $0$ &     $d$ &     $0$ &     $d^2$ &     $d$ &     $1$ &     $d^3$ &     $1$ &     $d$ &     $d^2$  \\ \hline
      $0$ &     $0$ &     $1$ &     $d$ &     $0$ &     $d$ &     $1$ &     $0$ &     $1$ &     $d$ &     $d^2$ &     $d$ &     $1$ &     $d$ &     $0$ &     $d$ &     $1$ &     $0$ &     $d^2$ &     $d$ &     $0$ &     $d$ &     $1$ &     $0$ &     $0$ &     $d$ &     $d^2$ &     $0$ &     $1$ &     $0$ &     $1$ &     $d$ &     $1$ &     $d$ &     $0$ &     $0$ &     $0$ &     $d$ &     $1$ &     $0$ &     $d$ &     $0$ &     $d$ &     $d^2$ &     $0$ &     $d^2$ &     $d$ &     $0$ &     $d^2$ &     $0$ &     $d^2$ &     $d$ &     $1$ &     $d^3$ &     $1$ &     $d$  \\ \hline
      $0$ &     $0$ &     $0$ &     $1$ &     $d$ &     $0$ &     $d$ &     $1$ &     $d$ &     $1$ &     $d$ &     $d^2$ &     $d$ &     $1$ &     $d$ &     $0$ &     $0$ &     $1$ &     $0$ &     $d^2$ &     $d$ &     $0$ &     $d$ &     $1$ &     $d$ &     $0$ &     $d$ &     $d^2$ &     $0$ &     $1$ &     $0$ &     $1$ &     $0$ &     $1$ &     $d$ &     $0$ &     $0$ &     $0$ &     $d$ &     $1$ &     $0$ &     $d$ &     $0$ &     $d$ &     $d^2$ &     $0$ &     $d^2$ &     $d$ &     $d$ &     $d^2$ &     $0$ &     $d^2$ &     $d$ &     $1$ &     $d^3$ &     $1$  \\ \hline
      $1$ &     $0$ &     $0$ &     $0$ &     $1$ &     $d$ &     $0$ &     $d$ &     $0$ &     $d$ &     $1$ &     $d$ &     $d^2$ &     $d$ &     $1$ &     $d$ &     $1$ &     $0$ &     $1$ &     $0$ &     $d^2$ &     $d$ &     $0$ &     $d$ &     $1$ &     $d$ &     $0$ &     $d$ &     $d^2$ &     $0$ &     $1$ &     $0$ &     $1$ &     $0$ &     $1$ &     $d$ &     $0$ &     $0$ &     $0$ &     $d$ &     $d$ &     $0$ &     $d$ &     $0$ &     $d$ &     $d^2$ &     $0$ &     $d^2$ &     $1$ &     $d$ &     $d^2$ &     $0$ &     $d^2$ &     $d$ &     $1$ &     $d^3$       
\end{tabular}}}
\captionof{table}{The Gram matrix $\tilde{G}_{4}^{Mb_{4,1}} .$}\label{Grammatrixn4tilde}

\bibliographystyle{abbrv}	

\bibliography{bib.bib}

\end{document}